\documentclass[12pt,centertags,oneside]{amsart}
\usepackage{amsmath,amstext,amsthm,amscd,typearea}
\usepackage{amssymb}
\usepackage{a4wide}
\usepackage[mathscr]{eucal}
\usepackage{mathrsfs}
\usepackage{typearea}
\usepackage{charter}
\usepackage{pdfsync}
\usepackage{url}
\usepackage{xcolor}

\usepackage[a4paper,width=16.2cm,top=3cm,bottom=3cm]{geometry}

\numberwithin{equation}{section}




\newtheorem{theorem}{Theorem}[section]
\newtheorem{definition}[theorem]{Definition}
\newtheorem{proposition}[theorem]{Proposition}
\newtheorem{corollary}[theorem]{Corollary}
\newtheorem{lemma}[theorem]{Lemma}
\newtheorem{remark}[theorem]{Remark}

\newcommand{\cali}[1]{\mathscr{#1}}

\newcommand{\lov}{{\rm lov}}
\newcommand{\supp}{{\rm supp}}

\newcommand{\dist}{\mathop{\mathrm{dist}}\nolimits}

\renewcommand{\Im}{\mathop{\mathrm{Im}}\nolimits}
\newcommand{\vol}{\mathop{\mathrm{vol}}}

\newcommand{\ddc}{\text{\normalfont dd}^c}
\newcommand{\dc}{\text{\normalfont d}^c}
\newcommand{\Ta}{\text{\normalfont T}}

\def\d{\operatorname{d}}
\def\Ker{\operatorname{Ker}}

\newcommand{\id}{{\rm id}}

\newcommand{\codim}{{\rm codim\ \!}}

\newcommand{\C}{\mathbb{C}}
\newcommand{\D}{\mathbb{D}}
\newcommand{\N}{\mathbb{N}}

\newcommand{\R}{\mathbb{R}}
\renewcommand\P{\mathbb{P}}



\title[]{Densities of currents and complex dynamics}

\author{Duc-Viet Vu}
\address{University of Cologne, Mathematical Institute, Germany}
\email{vuduc@math.uni-koeln.de}

\date{\today}
\begin{document}

\begin{abstract} We extend the Dinh-Sibony notion of  densities of currents to the setting where the ambient manifold  is not necessarily K\"ahler and study the intersection of analytic sets from the point of view of densities of  currents.  As an application, we introduce the notion of \emph{exotic periodic points} of a meromorphic self-map.  We then establish  \emph{the expected  asymptotic} for the sum of the  number of isolated periodic points and the number of exotic periodic points for holomorphic self-maps with a simple action on the cohomology groups on a compact K\"ahler manifold.   We also show that the algebraic entropy of meromorphic self-maps of compact complex surfaces is a finite  bi-meromorphic invariant. 
\end{abstract}

\maketitle

\medskip

\noindent
{\bf Classification AMS 2010}:  32U40, 32H50, 37F05. 

\medskip

\noindent
{\bf Keywords:} Periodic points, topological entropy, algebraic entropy, tangent current, density current, Gauduchon metric.   

\tableofcontents

\section{Introduction} \label{introduction}

A fundamental problem in the pluripotential theory and complex geometry is to define in a reasonable way the intersection of two closed positive currents.  Although, the intersection of currents of bi-degree $(1,1)$ is well understood (see \cite{Demailly_ag,Fornaess_Sibony,Bedford_Taylor_82}), the case of currents of higher bi-degree still remains challenging.  

A  recent remarkable progress in this research direction is the theory of densities of currents on K\"ahler manifolds given by  Dinh-Sibony \cite{Dinh_Sibony_density}, which  generalizes  the theory of  super-potentials and the classical theory of  intersection of currents of bi-degree $(1,1)$ mentioned above, see \cite{DNV,Viet_Lucas} for details.  This theory has deep applications to complex dynamics and foliations. We refer the reader to \cite{DS_unique_ergodicity,DNS_foliation,VA-lyapunov,DNV,DNT_equi,DVT_growth_periodic,lucas_foliation} for details.
   
The first aim of this paper is to develop the theory of densities of currents  in the setting where the ambient manifold is not necessarily K\"ahler and study the \emph{excess} intersection of analytic sets from the point of view of density currents. The second aim of this paper is to apply this study to complex dynamics. We present below such two applications. 

Let $X$ be a compact complex  manifold of dimension $k$. Let $f$ be a  dominant meromorphic self-map  of $X.$  
Let $\omega$ be a  strictly positive Hermitian $(1,1)$-form on $X.$  For $0 \le q \le k,$ put
$$d_q(f):= \limsup_{n \to \infty}\bigg( \int_X  (f^n)^* \omega^q \wedge  \omega^{k-q}\bigg)^{1/n}, \quad h_a(f):= \max_{0 \le q \le k}\{\log d_q(f)\}.$$
We will write $d_q$ for $d_q(f)$ if no confusion arises.   We can see easily that $d_j$ is independent of the choice of $\omega$ for every $j.$  The number $d_0$ is always equal to  $1$ and $d_k$  is equal to  the topological degree of $f.$ When $f$ is holomorphic, these numbers $d_j$ are finite because  the differential of $f$ is of uniformly bounded norm on $X.$ We call $d_q$ the $q^{th}$ \emph{dynamical degree} of $f$ for $0 \le q \le k$ and $h_a(f)$ \emph{the algebraic entropy}  of $f.$  

When $X$ is K\"ahler, the numbers $d_j, h_a(f)$ are crucial \emph{finite} bi-meromorphic invariants of $f;$ see  \cite{DS_upperbound,Ds_upperbound_mero,DS_regula}.  A \emph{ (isolated) periodic point of period $n$} of $f$ is by definition a (isolated) point in the intersection of the graph $\Gamma_n$ of $f^n$ and the diagonal $\Delta$ of $X^2.$    Let $P_n$ be the number of isolated periodic points of $f$ of period $n$ counted with multiplicity  and $h_t(f)$ the topological entropy of $f.$  

We will  introduce  the notion of \emph{ exotic  periodic points} and their multiplicities, see Definition \ref{def-exotic-periodic} below. Denote  by $\tilde{P}_n$  the sum of $P_n$ and the number of exotic periodic points of period $n$ counted with multiplicity.   
Our first main result, which is a direct consequence of Theorem \ref{the_equidistrG} in Section \ref{sec_mapdominant},  gives the expected asymptotic for $\tilde{P}_n$ in the K\"ahler case.

\begin{theorem} \label{the_equidistr}  We have 
\begin{align}\label{ine_upperboundPnrefined}
\tilde{P}_n  =  e^{n h_a(f)}+ o(e^{n h_a(f)})
\end{align}
if one of the following situations occurs:

$(i)$  $X$ is a compact K\"ahler manifold and $f$ is a holomorphic self-map of $X$ with a simple action on the cohomology groups.

$(ii)$ $X$ is a compact K\"ahler surface and $f$ is an algebraically stable (dominant) meromorphic self-map of $X$ with a minor (small) topological degree.
\end{theorem}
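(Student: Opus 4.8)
The plan is to deduce Theorem~\ref{the_equidistr} from the more precise Theorem~\ref{the_equidistrG}, which is stated in terms of the equidistribution of the intersection points of $\Gamma_n$ and $\Delta$ inside $X^2$, after suitable normalization. The first step is to fix a strictly positive Hermitian form $\omega$ on $X$ and consider the graph $\Gamma_n \subset X^2$ of $f^n$, equipped with its natural integration current $[\Gamma_n]$. The key point is that $P_n + (\text{number of exotic periodic points of period }n)$, counted with multiplicity, should equal the mass of the density current associated to the pair $([\Gamma_n], [\Delta])$ concentrated on the isolated and exotic components of $\Gamma_n \cap \Delta$; by the general theory of densities of currents developed in the body of the paper (the non-K\"ahler extension of Dinh--Sibony), this mass is controlled from above and below by cohomological quantities. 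Concretely, one writes $\tilde P_n$ as an intersection number $\{[\Gamma_n]\} \smallsmile \{[\Delta]\}$ computed in $H^{k,k}(X^2)$ plus correction terms coming from the positive-dimensional (non-exotic) components, which are shown to be negligible under the hypotheses $(i)$ or $(ii)$.

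Next I would identify the cohomology class $\{[\Gamma_n]\}$ via the K\"unneth decomposition $H^{k,k}(X^2) \cong \bigoplus_{q=0}^{k} H^{q,q}(X) \otimes H^{k-q,k-q}(X)$. Pairing with $\{[\Delta]\}$, whose K\"unneth components are the Poincar\'e duals of the $H^{q,q}(X)$'s, one obtains $\{[\Gamma_n]\} \smallsmile \{[\Delta]\} = \sum_{q=0}^{k} \mathrm{tr}\big((f^n)^* \restriction H^{q,q}(X)\big)$, up to signs that are positive because all operators preserve the cones of positive classes. Under hypothesis $(i)$ — $f$ holomorphic with a simple action on cohomology — the spectral radius of $(f^n)^*$ on $H^{q,q}(X)$ is exactly $d_q(f)^n$ and is attained by a single dominant eigenvalue with the eigenclass being nef and big, so $\mathrm{tr}\big((f^n)^* \restriction H^{q,q}(X)\big) = d_q(f)^n + o(d_q(f)^n)$; summing over $q$ and noting that the term with $q$ maximizing $d_q$ dominates gives $\sum_q \mathrm{tr} = e^{n h_a(f)} + o(e^{n h_a(f)})$. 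The passage from the cohomological intersection number to the \emph{geometric} count $\tilde P_n$ is exactly where the density-current machinery enters: the excess-intersection formula shows that the total intersection number splits as $\tilde P_n$ plus the masses carried by the positive-dimensional components of $\Gamma_n \cap \Delta$, and the definition of ``exotic periodic point'' is calibrated precisely so that only the genuinely higher-dimensional excess is discarded. For hypothesis $(ii)$ — compact K\"ahler surface, algebraically stable $f$ with minor topological degree — one uses the surface-specific results (the finiteness and bi-meromorphic invariance of the dynamical degrees, together with algebraic stability ensuring $(f^n)^* = (f^*)^n$ on $H^{1,1}$) to get the same trace asymptotic, with the minor-topological-degree hypothesis guaranteeing that $d_1(f)$ strictly dominates $d_2(f) = $ topological degree, so that $h_a(f) = \log d_1(f)$ and the $H^{1,1}$-trace is the leading term.

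I expect the main obstacle to be the lower bound, i.e.\ showing that the cohomological intersection number is not ``lost'' to non-exotic positive-dimensional components — equivalently, that the density current associated to $([\Gamma_n],[\Delta])$ has no hidden positive mass on components that fail to be counted. This requires the full strength of the non-K\"ahler density theory from the body of the paper: one must check that the tangent/density currents to $[\Gamma_n]$ along $\Delta$ are well-defined despite the absence of a K\"ahler form on $X^2$ (here the Gauduchon-metric techniques replace the closedness of $\omega$), and that the excess-intersection decomposition is an equality of masses, not merely an inequality. A secondary technical point is controlling the $o(e^{n h_a(f)})$ error uniformly: the subdominant eigenvalues of $(f^n)^*$ and the masses of the lower-dimensional components must both be $o(d_q^n)$, which in case $(i)$ follows from the simplicity hypothesis and in case $(ii)$ from algebraic stability plus the gap $d_1 > d_2$. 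Once these are in place, combining the trace asymptotic with the excess-intersection equality yields \eqref{ine_upperboundPnrefined}.
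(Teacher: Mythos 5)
Your outline has the right overall shape (reduce to an equidistribution statement for the graphs plus a control of the excess intersection via density currents), but it diverges from the paper and, more importantly, leaves the decisive step unproved. The paper does not compute $\tilde P_n$ through the Lefschetz number $\{\Gamma_n\}\smallsmile\{\Delta\}$ and the K\"unneth traces. Instead it normalizes $T_n:=A_n^{-1}[\Gamma_n]$ with $A_n=d_p^n$, uses the known convergence $T_n\to T_\infty=T^+\otimes T^-$ (from \cite{DNV} in case $(i)$, from Diller--Favre et al.\ in case $(ii)$), and then feeds this into Proposition \ref{pro_equidistrG}. The two hypotheses that make that proposition work are exactly what your proposal labels ``the main obstacle'' and then does not establish: first, that $\widehat T_\infty\wedge[\widehat\Delta]$ is classically well defined with $\langle\widehat T_\infty\wedge[\widehat\Delta],\widehat\omega^{k-1}\rangle=1$ and $\sigma_*(\widehat T_\infty\wedge[\widehat\Delta]\wedge\widehat\omega^{j})=0$ for $j\le k-2$ (equivalently, the tangent class has h-dimension $0$); second, that these identities pass from the limit back to finite $n$. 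The paper gets the first point from the \emph{continuity of the super-potentials} of $T^{\pm}$, which forces $\Pi_1^*T^+\wedge\Pi_2^*T^-$ to put no mass on the pluripolar set $\widehat\Delta$ and identifies $\widehat T_\infty\wedge[\widehat\Delta]$ with $(\sigma|_{\widehat\Delta})^*(T^+\wedge T^-)$; it gets the second from Proposition \ref{pro_semicontinuity2}(iii) (or Proposition \ref{pro_continui-totaltangenclass}), which upgrades semi-continuity to genuine convergence of all the masses $\sigma_*\widehat T_{n,j}$ precisely because $\dim\Gamma_n+\dim\Delta=2k=\dim X^2+j_\infty$; Lemma \ref{le_isolatedpoint}(ii)--(iii) then converts these mass convergences into the two-sided bound $A_n^{-1}\tilde P_n\to 1$. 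Without these ingredients your ``excess-intersection equality'' is an assertion, not a proof, and it is exactly the content of the theorem.

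A secondary inaccuracy: the cup product $\{\Gamma_n\}\smallsmile\{\Delta\}$ is the full Lefschetz number $\sum_i(-1)^i\mathrm{tr}\big((f^n)^*|_{H^i(X)}\big)$, which involves the off-diagonal groups $H^{p,q}$, $p\neq q$, and the odd-degree cohomology with alternating signs; these operators preserve no positive cone, so your justification for discarding the signs fails. The correct way to dispose of those terms is the inequality $\limsup_n\|(f^n)^*\Phi\|^{1/n}\le\sqrt{d_pd_q}$ for $(p,q)$-forms (Lemma \ref{le_spectralgap} in the paper, Dinh's inequality), which makes every non-$(p,p)$ contribution $o(e^{nh_a(f)})$ under the simple-action hypothesis. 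If you repair that point and replace the unproved excess-intersection equality by the chain Proposition \ref{pro_uniquetangentcurent} $\Rightarrow$ Proposition \ref{pro_semicontinuity2} $\Rightarrow$ Lemma \ref{le_isolatedpoint} $\Rightarrow$ Proposition \ref{pro_equidistrG}, your argument becomes a cohomological reformulation of the paper's proof rather than an alternative to it.
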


We refer to Section \ref{sec_mapdominant} for the definition of maps with a simple action on the cohomology groups.    Examples of automorphisms with a simple action on cohomology groups in dimension $>2$ can be found in \cite{Oguiso-Truong}. In the situation $(i)$ above, (\ref{ine_upperboundPnrefined})  still holds for a good class of  holomorphic correspondences, see Theorem \ref{the_equidistrG} for details.  Dinh-Nguy\^en-Truong \cite{DVT_growth_periodic} raised the question of whether the equality
$$P_n=  e^{n h_a(f)}+ o(e^{n h_a(f)})$$ 
holds for a large class of $f.$   So in view of (\ref{ine_upperboundPnrefined}), in order to solve the last question, one is led to expect that the number of exotic periodic points is at most $o(e^{n h_a(f)})$ at least for the two cases mentioned in Theorem \ref{the_equidistr}. The techniques developed in this paper are however not sufficient to obtain it.
   
We notice that  a related result for algebraically stable bimeromorphic maps on K\"ahler surfaces was given by  Dinh-Nguy\^en-Truong based on the Saito's local index function, see  \cite[Th. 1.4]{DVT_growth_periodic}. It is an interesting question to investigate the relation between the notion of multiplicity in the sense of Saito's local index function in \cite{Saito,KT_area} and that given in this paper.  
  
In the two situations in Theorem \ref{the_equidistr}, the upper bound $$P_n  \le e^{n h_a(f)}+ o(e^{n h_a(f)})$$  was proved in \cite{DNV,DVT_growth_periodic}.  Our contribution here is that the gap between $P_n$ and its expected asymptotic is the number of exotic periodic points of period $n.$ This will be obtained as a direct consequence of our study of the excess intersection of analytic sets. 

If $X$ is $\P^k$ and $f$ is a non-invertible holomorphic endomorphism, then every periodic point is isolated, \emph{i.e,} $\Gamma_n$ intersects $\Delta$ properly. In this case,  (\ref{ine_upperboundPnrefined}) is obtained easily by using B\'ezout's theorem, see \cite{DS_book}. In general, the fact that $\Gamma_n$  doesn't intersects $\Delta$ properly  is a main difficulty in estimating $P_n.$  This explains the need to have a good intersection theory for analytic sets or currents where the dimension excess phenomenon occurs.   

Here is a list of other cases where the asymptotic of $P_n$ was known (and the equidistribution of periodic points was also proved except for the last case):

$\bullet$  $X= \P^k$ and $f$ is a H\'enon-type map by   Dinh-Sibony \cite{DS_periodic_Henon},

$\bullet $  Meromorphic self-maps with dominant topological degree on compact K\"ahler manifolds by  Dinh-Nguy\^en-Truong   \cite{DNT_equi}, see also Dinh-Sibony \cite{DS_allurepolynom},

$\bullet$  Algebraically stable meromorphic self-maps on compact K\"ahler surfaces satisfying certain additional conditions by   Diller-Dujardin-Guedj \cite{Diller-Guedj-Dujardin} and Dinh-Nguy\^en-Truong \cite{DVT_growth_periodic}, see also \cite{Cantat}, 

$\bullet$ Area-preserving birational maps of projective surfaces with certain additional conditions by  Iwasaki-Uehara \cite{KT_area}.

Our second main result concerns the algebraic entropy and $P_n$ in  the non-K\"ahler case.

\begin{theorem} \label{th_main2phay}  Let $X$ be a compact complex surface and $f$ a meromorphic self-map of $X$.  Then the algebraic entropy $h_a(f)$ of $f$ is a finite bi-meromorphic invariant of $f$ and 
\begin{align}\label{ine_upperboundhf}
h_t(f) \le h_a(f)<\infty,
\end{align}
and 
\begin{align}\label{ine_upperboundPn}
\limsup_{n \to \infty} \frac{1}{n} \log  P_n \le h_a(f).
\end{align}
\end{theorem}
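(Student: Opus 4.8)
The plan is to reduce everything to the case of a compact Kähler surface, where the three assertions are already known (finiteness and bi-meromorphic invariance of the dynamical degrees $d_q$, the inequality $h_t(f)\le h_a(f)$, and the bound $\limsup_n \frac1n\log P_n\le h_a(f)$), via a suitable resolution of the surface. The key geometric input is that any compact complex surface $X$ admits a bi-meromorphic modification $\pi\colon \widehat X\to X$ such that $\widehat X$ is Kähler precisely when $X$ is; for surfaces that are \emph{not} of class VII, $X$ is already bi-meromorphic to a Kähler surface, so one can transport $f$ to a Kähler model and quote the Kähler results. Thus the first step is to isolate the genuinely non-Kähler surfaces (class VII and, more generally, surfaces with odd first Betti number) and handle them separately. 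Because I may assume the density-current machinery of the earlier sections over an arbitrary Gauduchon (in particular any) Hermitian metric, I can also argue directly: fixing a Gauduchon metric $\omega$ on $X$, the quantities $\int_X (f^n)^*\omega^q\wedge\omega^{k-q}$ can be compared across bi-meromorphic models using the pullback/pushforward estimates for positive currents that hold on Hermitian (not just Kähler) manifolds in dimension $k=2$, where the only currents of intermediate bidegree are $(1,1)$-currents and the $\ddc$-lemma obstruction is mild.

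The core of the argument is the submultiplicativity and bi-meromorphic invariance of the degrees $d_q$ in dimension two. First I would show $\big(\int_X (f^{n+m})^*\omega^q\wedge\omega^{k-q}\big)\le C\,\big(\int_X (f^n)^*\omega^q\wedge\omega^{k-q}\big)\big(\int_X (f^m)^*\omega^q\wedge\omega^{k-q}\big)$ up to a uniform constant, using that on a surface the relevant cohomology is controlled by the Bott–Chern or Aeppli groups, which are finite-dimensional, and that positive closed $(1,1)$-currents have a well-behaved pullback under meromorphic maps after regularization. Submultiplicativity (with the constant absorbed by Fekete's lemma) gives that $d_q$ is a genuine limit and is finite: finiteness for $q=0,2$ is immediate, and for $q=1$ it follows because $(f^n)^*\omega$ stays in a bounded region of a finite-dimensional cone up to normalization, its mass growing at most exponentially. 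For bi-meromorphic invariance, given a bi-meromorphic $\varphi\colon X\dashrightarrow Y$ conjugating $f$ to $g$, I would resolve the indeterminacies and use that $\varphi^*$ and $\varphi_*$ on positive $(1,1)$-currents change masses by at most a bounded factor (Hermitian Hodge-type estimates on a surface), so $d_q(f)$ and $d_q(g)$ coincide; this is where one genuinely uses $k=2$, since in higher dimension pullback of positive currents of bidegree $(p,p)$ with $p\ge 2$ is problematic. Inequality \eqref{ine_upperboundhf}, $h_t(f)\le h_a(f)$, would then follow from the variational/Yomdin–Gromov-type bound $h_t(f)\le \max_q \log d_q(f)$, whose proof (Gromov's lemma on volume growth of graphs) only needs a Hermitian metric and goes through verbatim on any compact complex surface once the $d_q$ are known to be finite.

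For the periodic-point bound \eqref{ine_upperboundPn}, the strategy is the Lefschetz-type estimate $P_n\le \#\big(\Gamma_n\cap\Delta\big)_{\mathrm{isolated}}$ counted with multiplicity, bounded above by an intersection number of the graph $\Gamma_n\subset X\times X$ with the diagonal $\Delta$. I would estimate this intersection number by the mass of $[\Gamma_n]\wedge \pi_1^*\omega\wedge\pi_2^*\omega$ (or the appropriate combination giving the dimension of the isolated part), which by the multidegree formula is bounded by a sum of products $\int_X (f^n)^*\omega^q\wedge\omega^{2-q}$ over $q$, hence by $C\,e^{n h_a(f)}$; taking $\frac1n\log$ and the limsup kills the constant and yields \eqref{ine_upperboundPn}. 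The technical point is making sense of $[\Gamma_n]\wedge\Delta$ when the intersection is not proper — here I would use the density-current/tangent-cone description from the earlier sections to separate the isolated contribution, exactly as in the Kähler proofs in \cite{DNV,DVT_growth_periodic}, noting that those arguments are local near $\Delta$ and only use a Hermitian metric on a neighborhood, so they survive the loss of the Kähler condition. The main obstacle I anticipate is precisely this last point: verifying that the intersection theory of currents developed in the non-Kähler sections of the paper is robust enough to extract the isolated periodic points with the correct multiplicity count, and that the global mass bound on $[\Gamma_n]$ (which implicitly used $\ddc$-closedness in the Kähler case) can be replaced by a Gauduchon-metric bound; the finiteness of $d_1(f)$ and the comparison of masses under bi-meromorphic maps both hinge on the same Hermitian cohomological estimates, so most of the work is concentrated there.
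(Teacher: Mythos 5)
There is a genuine gap, concentrated exactly where you yourself flag the ``main obstacle.'' First, your opening reduction to the K\"ahler case cannot work: for compact complex surfaces, being K\"ahler is equivalent to $b_1$ being even, which is a bi-meromorphic invariant, so a non-K\"ahler surface is never bi-meromorphic to a K\"ahler one. The reduction therefore only re-derives the known K\"ahler statement and leaves untouched precisely the new content of the theorem (class VII and other surfaces with $b_1$ odd). Second, your ``direct'' argument for submultiplicativity of the degrees leans on ``well-behaved pullback of positive closed $(1,1)$-currents after regularization'' --- but the regularization theorem for closed positive currents is exactly the tool that is unavailable on non-K\"ahler manifolds; the paper states this explicitly as the reason the Dinh--Sibony proofs do not extend. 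The paper's substitute is a purely geometric volume estimate (Proposition \ref{pro_graph}): for surfaces $V_1\subset Y\times X$, $V_2\subset X\times Z$ one has $\sum_j\vol(W_j)\le c_X\vol(V_1)\vol(V_2)$ for the components of $(V_1\times V_2)\cap(Y\times\Delta\times Z)$, proved by writing $[\widehat\Delta_2]$ with a quasi-p.s.h.\ potential and running a careful Stokes/bidegree bookkeeping that uses $\ddc\omega=0$ (Gauduchon $=$ pluriclosed in dimension $2$) and, crucially, $\dim X=2$ to kill all the problematic terms. Applied to $V_1=\Gamma_{n_1}$, $V_2=\Gamma_{n_2}$ this gives $\vol(\Gamma_{n_1+n_2})\le c_X\vol(\Gamma_{n_1})\vol(\Gamma_{n_2})$, hence finiteness and bi-meromorphic invariance of $h_a(f)$. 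You never identify this lemma or propose a working substitute, so the finiteness of $d_1$ and the invariance statement are unproved in your scheme.

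Relatedly, your claim that Gromov's argument ``goes through verbatim'' conflates two steps: $h_t(f)\le\lov(f)$ does hold with only a Hermitian metric, but the inequality $\lov(f)\le h_a(f)$ is the nontrivial half, and in the paper it is obtained by decomposing $\vol(\Gamma_{[n]})$ over multi-indices $M=(n_1,\dots,n_k)$ with $k=2$ and using $\int_X(f^{n_1})^*\Theta=d_2^{\,n_1}\int_X\Theta$ for top-degree forms together with the definition of $d_q$ --- again a dimension-two argument with no regularization. Your treatment of \eqref{ine_upperboundPn}, by contrast, is essentially aligned with the paper: $\tilde P_n\lesssim\vol(\Gamma_n)$ follows from the tangent-current bound for improper intersections (Proposition \ref{pro_tontaiWnga2} applied to $\Gamma_n$ and $\Delta$ in $X^2$, using $\ddc(p_1^*\omega+p_2^*\omega)=0$ on $\Delta$), and that part of your plan would go through once the volume-growth estimates above are in place.
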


The new point in the above theorem is the case where $X$ is \emph{non-K\"ahler.} We refer to \cite{Gromov_entropy,Vu_nonkahler_topo_degree} for examples of self-maps on non-K\"ahler manifolds.  We don't know whether $d_q$  are finite for general  $X$ of dimension $>2.$    When $X$ is a K\"ahler manifold of arbitrary dimension, (\ref{ine_upperboundPn}) is proved by Dinh-Nguy\^en-Truong \cite[Th. 1.1]{DVT_growth_periodic}.   The upper bound (\ref{ine_upperboundhf}) was proved by Gromov \cite{Gromov_entropy} for holomorphic self-maps of compact K\"ahler manifolds and by  Dinh-Sibony  \cite{DS_upperbound,Ds_upperbound_mero} for meromorphic self-correspondences of compact K\"ahler manifolds.  The proofs in these last papers use, in an essential way, a regularisation theorem for closed positive currents in \cite{DS_superpotential} which is not  available in the non-K\"ahler case.

Theorem \ref{th_main2phay} is a direct consequence of Theorem \ref{th_main2phayG} in Section 4 which is in turn deduced from our study of the intersection of analytic subsets on non-K\"ahler manifolds.

The paper is organized as follows. In Section 2, we present a generalization of the theory of tangent currents  to non-K\"ahler manifolds.  In Section \ref{sec_analy}, we use the last theory to study the  intersection of analytic sets.    Theorems \ref{the_equidistr} and \ref{th_main2phay} are proved in Section \ref{sec_mapdominant}.  More applications to complex dynamics for non-K\"ahler manifolds are also given in Section \ref{sec_mapdominant}.  
\\

\noindent
\textbf{Acknowledgments.} The author would like to express his gratitude to Tien-Cuong Dinh and Taeyong Ahn  for fruitful discussions on the paper \cite{Dinh_Sibony_density}. He also thanks Tuyen Trung Truong and Vi\^et-Anh Nguy\^en for stimulating comments.  This research is supported by a postdoctoral fellowship of Alexander von Humboldt Foundation.

\section{Tangent currents} \label{sec_tangent}

Let $X$ be a complex manifold of dimension $k$ and $V$ smooth complex submanifold of $X$ of dimension $l.$  Let $T$ be a closed positive $(p,p)$-current on $X,$ where $0 \le p \le k.$ We assume that $T$ \emph{has no mass on $V.$} By Federer's support theorem \cite{Federer},  every closed positive current can be decomposed into the sum of a closed positive current having no mass on $V$ and one on $V.$ Hence,  the  hypothesis on $T$ in fact makes no restriction in our study. Let $\supp T$ be the support of $T.$   Let $[V]$ be the current of integration along $V.$  

 Denote by $\pi: E\to V$ the normal bundle of $V$ in $X$ and $\overline E:= \P(E \oplus \C)$ the projective compactification of $E.$ The hypersurface at infinity $H_\infty: = \overline E \backslash E$ of $\overline E$  is naturally isomorphic to $\P(E)$ as fiber bundles over $V.$ We also have a canonical projection $\pi_\infty: E \backslash V  \to H_\infty. $ 

Let $U$ be an open subset of $X$ with $U \cap V \not = \varnothing.$  Let $\tau$ be  a smooth diffeomorphism  from $U$ to an open neighborhood of $V\cap U$ in $E$ which is identity on $V\cap U$ such that  the restriction of its differential $d\tau$ to $E|_{V \cap U}$ is identity.  Such a map is called \emph{an admissible map}. When $U$ is a small enough tubular neighborhood of $V,$ there always exists an admissible map $\tau$ by \cite[Le. 4.2]{Dinh_Sibony_density}. In general, $\tau$ is not holomorphic.  When $U$ is a small enough local chart, we can choose an admissible holomorphic map by using suitable holomorphic coordinates on $U$.   

For $\lambda \in \C^*,$ let $A_\lambda: E \to E$ be the multiplication by $\lambda$ on fibers of $E.$ Consider the family of closed currents $(A_\lambda)_* \tau_* T$ on $E|_{V \cap U}$ parameterized by $\lambda \in \C^*.$

\begin{definition} \cite{Dinh_Sibony_density,Viet_Lucas} A tangent current  $T_\infty$ of $T$ along $V$ is a closed positive current on $E$ such that there are a sequence $(\lambda_n) \subset \C^*$ converging to $\infty$ and a collection of admissible holomorphic maps $\tau_j: U_j \to E$ for $j \in J$ satisfying the following two properties. 

$(i)$ $$V \subset \cup_{j \in J} U_j,$$

$(ii)$ 
$$T_\infty:= \lim_{n \to \infty} (A_{\lambda_n})_* (\tau_j)_* T$$
on $\pi^{-1}(U_j \cap V)$ for every $j \in J.$
\end{definition}

When $X$ is K\"ahler and $\supp T \cap V$ is compact, the above definition of tangent currents agrees with that given in  \cite{Dinh_Sibony_density} and it is  proved there  that tangent currents always exist and are independent of the choices of $\tau_j$. This crucial fact also holds in our setting, see Lemma \ref{le_globaladmiss} below.      By this reason, the sequence $(\lambda_n)$ is called \emph{the defining sequence} of $T_\infty.$   Before introducing a weaker assumption (Hypothesis \textbf{(H)} below) guaranteeing the existence of tangent currents,  we will give some notations and auxiliary results.  

Following \cite{Dinh_Sibony_density}, a bi-Lipschitz map $\tilde{\tau}$ from $U$ to an open neighborhood of $U\cap V$ in $E$ is said to be \emph{almost-admissible} if   $\tilde{\tau}|_{U \cap V}= \id,$ $\tilde{\tau}$ is smooth outside $V$ and on every local chart $\big(W,x=(x',x'')\big)$ near $V\cap U$ with $V\cap W=\{x''=0\}$ then 
$$\tilde{\tau}(x)= \big(x'+ O^*(|x''|), x''+ O^*(|x''|^2)\big),$$
and 
$$ \quad d \tilde{\tau}(x)=\big(dx'+ O^{**}(|x''|), dx''+ O^{**}(|x''|^2)\big),$$
where for every positive integer $m,$ $O^*(|x''|^m)$ means a function which is continuous outside $V$ and is equal to $O(|x''|^m)$ as $x'' \to 0;$ $O^{**}(|x''|^m)$ means the sum of $1$-forms with $O^*(|x''|^m)$ coefficients and a combination of $dx'', d \overline x''$ with $O^*(|x''|^{m-1}).$ 

Note that $\tilde{\tau}_* T$ is well-defined as a closed current on $\tilde{\tau}(U) \backslash (U \cap V)$ which is of locally finite mass near $U \cap V$ because $\tilde{\tau}$ is bi-Lipschitz and smooth outside $V.$ We  extend $\tilde{\tau}_* T$ to be  a current of order $0$ on $\tilde{\tau}(U)$ by putting $\tilde{\tau}_*T:= 0$ on $U \cap V.$ Although  $\tilde{\tau}_* T$ is actually closed (see \cite[4.1.14]{Federer}), we will not need that fact in the sequel. When $\tilde{\tau}$ is smooth, it is clear that $\tilde{\tau}_* T$ is the usual pushforward of $T$ by $\tilde{\tau}$ because $T$ has no mass on $V.$      We will need to use both admissible and  almost-admissible maps. 

\begin{lemma} \label{le_globaladmiss} (\cite{Viet_Lucas}) Let $T_\infty$ be a tangent current of $T$ along $V$ with the defining sequence $(\lambda_n)_{n\in \N}.$  Then for any almost-admissible map $\tilde{\tau}: U \to E$, we have 
$$T_\infty= \lim_{n \to \infty} (A_{\lambda_n})_* \tilde{\tau}_* T$$
on $\pi^{-1}(U \cap V).$ 
\end{lemma}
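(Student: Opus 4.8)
The plan is to reduce the statement to a comparison between the pushforward by an almost-admissible map $\tilde\tau$ and the pushforward by the admissible holomorphic maps $\tau_j$ appearing in the definition of $T_\infty$. First I would work locally: fix a point $x_0 \in V$, choose $j$ with $x_0 \in U_j$, and pick a small local chart $(W, x=(x',x''))$ around $x_0$ with $V \cap W = \{x''=0\}$ on which both $\tau_j$ and $\tilde\tau$ are defined. Since the assertion is local on $\pi^{-1}(U \cap V)$ and partitions of unity are compatible with taking limits of currents, it suffices to prove $\lim_n (A_{\lambda_n})_*\tilde\tau_* T = \lim_n (A_{\lambda_n})_* (\tau_j)_* T$ on $\pi^{-1}(W \cap V)$. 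The right-hand side equals $T_\infty$ there by definition, so we are reduced to showing the two dilated families have the same limit.

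The key step is to introduce the transition map $\sigma := \tilde\tau \circ \tau_j^{-1}$, defined on a neighborhood of $W \cap V$ in $E$, and to observe from the defining estimates of admissible and almost-admissible maps that $\sigma$ is itself (almost-)admissible: $\sigma|_{W\cap V} = \id$, $\sigma$ is bi-Lipschitz and smooth off $V$, and in the coordinates $(x',x'')$ on $E$ one has the expansion $\sigma(x) = (x' + O^*(|x''|),\, x'' + O^{**}(|x''|^2))$ with the corresponding control on $d\sigma$. Then $(A_{\lambda_n})_* \tilde\tau_* T = (A_{\lambda_n})_* \sigma_* (\tau_j)_* T = (A_{\lambda_n} \circ \sigma \circ A_{\lambda_n}^{-1})_* (A_{\lambda_n})_*(\tau_j)_* T$. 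The point is that the conjugated maps $\sigma_n := A_{\lambda_n} \circ \sigma \circ A_{\lambda_n}^{-1}$ converge to the identity in an appropriate $C^1$-sense on compact subsets of $E$ away from $H_\infty$ (uniformly on the relevant region), because the rescaling $A_{\lambda_n}$ with $|\lambda_n| \to \infty$ pushes $x''$ toward $0$ where $\sigma$ is $C^1$-close to the identity; concretely, if $A_\lambda(x',x'') = (x', \lambda x'')$ then $\sigma_n(x',x'') = (x' + O^*(|x''|/|\lambda_n|),\, x'' + O^{**}(|x''|^2/|\lambda_n|))$, whose difference from $\id$ together with that of its differential tends to $0$. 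Since $(A_{\lambda_n})_*(\tau_j)_* T$ has locally uniformly bounded mass (it converges to $T_\infty$), applying a sequence of maps converging to the identity in $C^1$ to a mass-bounded sequence of currents does not change the limit. This gives $\lim_n (A_{\lambda_n})_* \tilde\tau_* T = \lim_n (A_{\lambda_n})_*(\tau_j)_* T = T_\infty$ on $\pi^{-1}(W\cap V)$, as desired.

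I expect the main obstacle to be making rigorous the assertion that \emph{applying $C^1$-small perturbations of the identity does not affect the weak limit of a mass-bounded sequence of closed positive currents} in a setting where the perturbations are only smooth off $V$ (and merely bi-Lipschitz across $V$) and where one works near, but must stay away from, the divisor $H_\infty$ at infinity. The care needed is to test against a fixed smooth compactly supported form $\Phi$ on $\pi^{-1}(W\cap V) \subset E$: write $\langle \sigma_{n*} S_n - S_n, \Phi\rangle = \langle S_n, \sigma_n^* \Phi - \Phi\rangle$ where $S_n := (A_{\lambda_n})_*(\tau_j)_* T$, and bound the right-hand side by $\|\sigma_n^*\Phi - \Phi\|_{\infty}$ (on a neighborhood of the support of $\Phi$) times the mass of $S_n$, which is $O(1)$; the sup-norm of $\sigma_n^*\Phi - \Phi$ goes to $0$ from the $C^1$-convergence $\sigma_n \to \id$. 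One also needs to check that the region where all maps are defined is stable under the $A_{\lambda_n}$ for $n$ large, which is automatic since $|\lambda_n|\to\infty$ contracts the fiber directions, and that the extension-by-zero convention on $V$ causes no trouble — here one uses that $T$ has no mass on $V$ and that $\sigma_n$ preserves $V$. This is essentially the argument already used in \cite{Dinh_Sibony_density, Viet_Lucas} to prove independence of tangent currents from the choice of admissible map, adapted to compare admissible with almost-admissible maps; I would cite those references for the routine estimates and concentrate the write-up on the conjugation identity and the $C^1$-convergence $\sigma_n \to \id$.
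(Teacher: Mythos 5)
Your proposal is correct and follows essentially the same route as the paper: both arguments rest on the admissibility estimates forcing the discrepancy between the two pushforwards, after rescaling by $A_{\lambda_n}$ and restricting to the support of a fixed test form (where $|x''|\lesssim|\lambda_n|^{-1}$), to be of order $O(|\lambda_n|^{-1})$, which is then killed by the uniform local mass bound of $(A_{\lambda_n})_*(\tau_j)_*T$ coming from its convergence to $T_\infty$. The only difference is presentational — you conjugate through the transition map $\tilde\tau\circ\tau_j^{-1}$ on $E$, whereas the paper compares $\tau_j^*A_{\lambda_n}^*\Phi$ and $\tilde\tau^*A_{\lambda_n}^*\Phi$ directly on $X$ — and this does not change the substance of the estimate.
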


\proof We follow closely the arguments from \cite{Viet_Lucas}.  Let $\tau_j, U_j$ with $j \in J$ be as above. Fix a $j \in J.$ Without loss of generality, we can suppose that $U=U_j$ is a local chart. Put $\tau:= \tau_j.$ Let $(x', x'')$ be local coordinates on $U$ for which $V \cap U= \{x''=0\}.$  Identify $E$ with $(V \cap U) \times \C^{k-l}$, recall here that $l= \dim V$. Since both $\tau$ and $\tilde{\tau}$ are almost-admissible, we have 
\begin{equation} \label{eq:rho-tau}
\left( \tau - \tilde{\tau} \right) (x',x'') = \big(O^*(|x''|), O^*(|x''|^2)\big), \quad \left( d\tau - d\tilde{\tau} \right) (x',x'') = \big(O^{**}(|x''|), O^{**}(|x''|^2)\big), 
\end{equation}
as $x''\to 0$.   Let $\Phi$  be a smooth form with compact support in $\pi^{-1}(U \cap V) \subset E$ and denote 
$$\Phi_\lambda:= (A_\lambda)^* \Phi.$$  
 Notice that $|x''| \lesssim |\lambda|^{-1}$ on  the support of $\tau^* \Phi_\lambda -  \tilde{\tau}^* \Phi_\lambda$. Using this fact and (\ref{eq:rho-tau}), on $U \backslash V,$ we have 
$$\tau^* \Phi_\lambda -  \tilde{\tau}^* \Phi_\lambda   = \frac{1}{|\lambda|} \tau^* A_\lambda^* \Psi_\lambda $$ 
where $\Psi_\lambda$ are  forms on $\pi^{-1}(U \cap V) \backslash V$ supported in  a fixed compact subset of $E$ and the coefficients of $\Psi_\lambda$ are uniformly bounded on $\pi^{-1}(U \cap V)$ in $\lambda$. Let $\Omega$ be a positive form  with compact support on $\pi^{-1}(U\cap V)$ such that $\Psi_\lambda \leq \Omega$ on $\pi^{-1}(U\cap V) \backslash V$ for every $\lambda$. Since  $ (A_\lambda)_* \tau_*  T$ is positive and $T$ has no mass on $V$, we have  
\begin{align*}
\big|\langle  T, \tau^* \Phi_\lambda -  \tilde{\tau}^* \Phi_\lambda \rangle \big| &= \big|\langle  T, \mathbf{1}_{U \backslash V}(\tau^* \Phi_\lambda -  \tilde{\tau}^* \Phi_\lambda) \rangle \big| \\
&\le   |\lambda|^{-1} \big|\langle (A_\lambda)_* \tau_*  T, \Psi_\lambda \rangle \big| \leq |\lambda|^{-1}\big|\langle (A_\lambda)_* \tau_*  T, \Omega \rangle \big|.
\end{align*}
By the hypothesis,  $\lim_{\lambda_n \to \infty}(A_{\lambda_n})_* \tau _* T= T_\infty.$ Thus  the mass of $(A_{\lambda_n})_* \tau _* T$ on compact sets is bounded uniformly in $\lambda_n$. This gives 
\begin{align} \label{ine_taunaga}
\big|\langle T, \tau^* \Phi_{\lambda_n} -  \tilde{\tau}^* \Phi_{\lambda_n} \rangle \big| \le C  |\lambda_n|^{-1}.
\end{align}
for some constant $C$ independent of $n.$ Hence $\lim_{n\to \infty} (A_{\lambda_n})_* \tilde{\tau}_* T= T_\infty.$  The proof is finished. 
\endproof

For two closed positive currents $T_1, T_2$ on $X.$  Consider the tensor current $T_1 \otimes T_2$ on $X \times X.$   A \emph{density current} associated to $T_1, T_2$ is a tangent current of $T_1 \otimes T_2$ along the diagonal $\Delta$ of $X \times X.$  Consider a particular case where  $T_1:= T$ and $T_2:= [V].$ We will show that a density current associated to $T, [V]$ corresponds naturally to a tangent current of $T$ along $V.$ 

 Observe that we have  natural identifications $\Ta (X^2) \approx \Ta X \times \Ta X$ between vector bundles, where $\Ta X$ is the tangent bundle of $X$ and $\Delta \approx X$.  Since $V \subset X\approx \Delta,$ there is a canonical inclusion $\imath$ from  $\Ta V$ to $(\Ta X \times \{0\})|_\Delta$ which is a subbundle of $\Ta (X^2)|_{\Delta}.$   Let $F$  be the image of $\imath(\Ta V)$ in the normal bundle $E_\Delta= \Ta (X^2)/ \Ta \Delta.$ Put $\Delta_V:= \{(x,x) \in X^2:x \in V\}.$  Let $E_{\Delta, V}$ be the restriction of $E_{\Delta}$ to $\Delta_V.$  Observe that $F$  is a subbundle of $E_{\Delta,V}$ of rank $l$ and the natural map 
$$\Psi: E_{\Delta, V}/F \to E=\Ta X/ \Ta V$$
 is an isomorphism. Let $p_V: E_{\Delta,V} \to E_{\Delta,V}/F $ be the natural projection. 

\begin{lemma} (\cite[Le. 5.4]{Dinh_Sibony_density}) \label{le_deltatangecurr}  If $T_\infty$ is  a tangent current of $T$ along $V,$ then the current $p_V^* \Psi^* T_\infty$ is a tangent current of $T \otimes [V]$ along $\Delta.$  Conversely, every tangent current of $T\otimes [V]$ along $\Delta$ can be written as $p_V^* \Psi^* T_\infty$ for some tangent current $T_\infty$ of $T$ along $V.$ 
\end{lemma}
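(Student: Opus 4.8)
The plan is to reduce the statement to a purely local computation in suitable coordinates, using the fact (Lemma \ref{le_globaladmiss}) that tangent currents can be computed using any almost-admissible map, together with a careful analysis of how the dilations $A_\lambda$ on the two normal bundles $E$ and $E_\Delta$ are related through the map $\Psi$. First I would fix a point $x_0 \in V$ and choose holomorphic coordinates $(z',z'')$ on a chart $U \ni x_0$ in $X$ with $V \cap U = \{z''=0\}$, so that $E|_{U \cap V} \cong (U \cap V) \times \C^{k-l}$, the fiber coordinate being $z''$; the inclusion $\tau_0 \colon U \to E$ given by $\tau_0(z',z'') = (z',z'')$ (identity in these coordinates) is then admissible and holomorphic. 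On $U \times U$ in $X^2$, with coordinates $\big((z',z''),(w',w'')\big)$, the diagonal $\Delta$ is $\{z=w\}$; I would use the coordinate change $(u,v) := (z-w,\,w)$ so that $\Delta = \{u=0\}$ and $E_\Delta|_{\Delta \cap (U\times U)} \cong \Delta_U \times \C^k$ with fiber coordinate $u = (u',u'')$. In these coordinates, $\Delta_V = \{u = 0,\ v'' = 0\}$, the subbundle $F \subset E_{\Delta,V}$ is $\{u'' = 0\}$ (the image of $\Ta V$), the quotient $E_{\Delta,V}/F$ has coordinate $u''$, the projection $p_V$ is $(v',v'',u',u'') \mapsto (v',v'',u'')$, and the isomorphism $\Psi \colon E_{\Delta,V}/F \to E$ sends $(v',u'')$ to the point of $E$ over $v'$ with fiber coordinate $u''$ — i.e. $\Psi$ is the identity in coordinates. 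The key point is that under these identifications the product diffeomorphism $\tilde\tau \times \tau_0$ (suitably transported via the coordinate change $(z,w) \mapsto (u,v)$) is an almost-admissible map $U\times U \to E_\Delta$ near $\Delta_V$, and the dilation $A_\lambda$ on $E_\Delta$ restricted to the relevant directions acts as $(u',u'') \mapsto (\lambda u', \lambda u'')$, which on the quotient is exactly the dilation $A_\lambda$ on $E$ transported by $\Psi$.

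The main steps, in order, would be: (1) record the coordinate identifications above and check that $p_V$, $\Psi$ are as claimed — this is bookkeeping with the normal-bundle exact sequences $0 \to \Ta V \to \Ta X|_V \to E \to 0$ and $0 \to \Ta\Delta \to \Ta X^2|_\Delta \to E_\Delta \to 0$ and the inclusion $\imath$; (2) write the current $T \otimes [V]$ in the $(u,v)$-coordinates: since $[V] = [\{w''=0\}]$, in $(u,v)$-coordinates it becomes integration over $\{v'' = 0\}$ (after the affine change $w = v$), so $T\otimes[V]$ is supported on $\{v''=0\}$ and is, fiberwise in $v'$, the current $T$ in the $z$-variable pushed to the $u$-variable via $u = z - w$, tensored with the delta-type current in the $(v'',w'')$ directions; (3) apply $(\tilde\tau\times\tau_0)_*$ and then $(A_\lambda)_*$ and pass to the limit along a defining sequence $(\lambda_n)$, observing that because $\tau_0$ is the identity and $[V]$ is already "flat" along $V$, the dilation in the $v''$-variable is harmless and the limit factors as (the tangent-current limit of $T$ along $V$, i.e. $T_\infty$) pulled back by the projection forgetting $v''$ and then pulled back by $\Psi^{-1}$ — giving precisely $p_V^* \Psi^* T_\infty$; (4) for the converse, run the same identification backwards: given a tangent current $S$ of $T \otimes [V]$ along $\Delta$ with defining sequence $(\lambda_n)$, restrict attention to $\pi^{-1}(\Delta_V)$, note that the special product structure of $T\otimes[V]$ forces $S$ to be constant (invariant) in the $v''$-fiber directions and to be a pullback $p_V^* S'$ for a closed positive current $S'$ on $E_{\Delta,V}/F \cong E$ over $V$, and then identify $S' = \Psi^* T_\infty$ where $T_\infty := (\Psi^{-1})^* S'$ is checked to be a genuine tangent current of $T$ along $V$ with the same defining sequence by reading step (3) in reverse.

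I expect the main obstacle to be step (3)–(4): controlling the interaction between the dilation $A_\lambda$ acting in the $u = (u',u'')$ directions of $E_\Delta$ and the "trivial" directions $v''$ coming from the current $[V]$, and in particular verifying that no mass escapes to $H_\infty$ or is created in the $v''$-directions in the limit — equivalently, that a limit of $(A_{\lambda_n})_*(\tilde\tau\times\tau_0)_*(T\otimes[V])$ on $\pi^{-1}(\Delta_V)$ necessarily has the product-with-$[F]$ form $p_V^*(\cdot)$. This is where one uses that $T$ has no mass on $V$ (so $T\otimes[V]$ has no mass on $\Delta$, legitimizing the constructions of Section \ref{sec_tangent}), that $\tau_0$ restricted to the $V$-directions is exactly the identity with identity differential, and a uniform mass bound coming from the existence of the limit; a standard slicing argument in the $v''$-variable, combined with the fact that $[\{v''=0\}]$ is $A_\lambda$-invariant, then yields the product structure. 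Everything else is routine once the coordinate dictionary of step (1) is in place, and indeed the whole argument parallels \cite[Le. 5.4]{Dinh_Sibony_density}, the only new feature being that our $\tau_j$ need only be almost-admissible rather than holomorphic, which Lemma \ref{le_globaladmiss} already handles.
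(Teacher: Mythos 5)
Your proposal is correct and follows essentially the same route as the paper: the same local coordinates $(x,z)$ with $z=x-y$, the identification of $F$, $p_V$, $\Psi$ in these coordinates, the commutation of $A_\lambda$ with the bundle maps, and a test-form/slicing computation showing the limit of $(A_{\lambda_n})_*(T\otimes[V])$ acquires the product structure $p_V^*\Psi^*T_\infty$. The paper organizes the converse exactly as you indicate at the end: the existence of the limit yields uniform mass bounds for $(A_{\lambda_n})_*T$ on compact sets, one extracts a convergent subsequence to produce $T_\infty$, and the forward direction then identifies the limit and forces uniqueness.
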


\proof  Firstly notice that every tangent current of $T \otimes [V]$ along $\Delta$ is supported on  $\pi_{\Delta}^{-1}\big((\supp T \times V) \cap \Delta\big),$ where $\pi_\Delta$ is the natural projection from $E_\Delta$ to $\Delta.$  Hence, a such current is supported on $E_{\Delta_V}.$ 

Let $T_\infty$ be  a tangent current of $T$ along $V$ and $(\lambda_n)$ its defining sequence.  Consider a local chart $(U,x)$ of $X$ with $U= U' \times U''$ and  $x=(x',x'')$ so that $V\cap U$ is given by the equation $x''=0$ and $0 \in U.$ We then obtain an induced local chart $U \times U$ with coordinates $(x,y)$ on $X \times X$ with $x=(x',x'')$ and $y=(y',y'').$ The diagonal $\Delta$ is given by the equation $x=y$ on $U \times U.$ Put $z=(z',z''):= x-y,$ $z'=x'-y',$ $z''= x''-y''.$   Thus, for an  open subset $U_1=U_1' \times U''_1$ of $U$ small enough containing $0$,  $\big(U_1^2,(x,z)\big)$ is also a local chart  on $X^2$ with $\Delta= \{z=0\}.$ 

Using the local coordinates $(x,z),$ we  identify the tangent bundle of $X^2$ on $U_1^2$ with $U_1^2 \times \C^{2k}$ and $E_\Delta$ with $U_1 \times \C^k$ which is embedded in $U_1^2 \times \C^{2k}$ as $U_1 \times \{0\} \times \C^k.$   Similarly we also identify $\Ta X$ on $U$ with $U \times \C^k$ and $E$ with $U' \times \C^{k-l}.$ With these identifications, we see that  
$$E_{\Delta,V} \approx U' \times \C^k, \quad F \approx U' \times \C^{l}\times \{0\}.$$
It follows that  
$$p_V:  U' \times \C^k \to U' \times \{0\} \times \C^{k-l}, \quad \Psi:  U' \times \{0\} \times \C^{k-l} \to U' \times \C^{k-l}.$$
We also have that  the identity maps $\id_U: U \to U$ and $\id_{U_1^2}: U_1^2 \to U_1^2$ are (local) 
holomorphic admissible maps for $V, \Delta$ on $X, X^2$ respectively. By definition of $T_\infty,$ we 
get $T_\infty= \lim_{n \to \infty} (A_{\lambda_n})_* T$  on $U' \times \C^{k-l}.$ Thus 
\begin{align}\label{eq_pullTinfty}
p_V^* \Psi^* T_\infty=   \lim_{n \to \infty} p_V^* \Psi^* (A_{\lambda_n})_* T=\lim_{n \to \infty} (A_{\lambda_n})_* p_V^* \Psi^* T
\end{align}
because $(A_\lambda)_*= A_{\lambda^{-1}}^*$ and $A_{\lambda^{-1}}$ commutes with the vector bundle maps $p_V, \Psi.$ We now prove that $(A_{\lambda_n})_* (T \otimes [V])$ is convergent on $U_1^2 \times \C^{k-l}.$  Let $\Phi= \Phi_0(x')\wedge \Phi_1(x'')  \wedge \Phi_2(z') \wedge \Phi_3(z'')$ be a smooth form with compact support in $U_1 \times \C^k.$ The set of forms $\Phi$ is dense in $\cali{C}^\infty$-topology in  the space of smooth forms with compact support.   We consider first the case where $\Phi_1$ is a function in $x''.$   Without loss of generality, we can assume $\Phi_1(0)=1.$  We have 
\begin{align} \label{eq_tinhpVPsiT2}
\big \langle (A_{\lambda_n})_* (T \otimes [V]), \Phi \big \rangle &= \big \langle T \otimes [V], \Phi_0(x') \wedge \Phi_1(x'') \wedge \Phi_2(\lambda_n z') \wedge \Phi_3(\lambda_n z'') \big \rangle\\
\nonumber
&= \big \langle T(x), \Phi_0(x') \wedge \Phi_1(x'')  \wedge \Phi_3(\lambda_n x'') \wedge \int_{y' \in V} \Phi_2\big(\lambda_n(x'-y')\big) \big \rangle\\
\nonumber
&= \big \langle T(x), \Phi_0(x') \wedge \Phi_3(\lambda_n x'') \wedge \int_{(y',0) \in V} \Phi_2\big(\lambda_n(x'-y')\big) \big \rangle+ O(|\lambda_n|^{-1})
\end{align}
because $x'' \to 0$ as $\lambda_n \to \infty$ and $(A_{\lambda_n})_*T$ is of uniformly 
bounded mass on compact subsets of $U' \times \C^{k-l}.$ Observe that
$$ \int_{(y',0) \in V} \Phi_2\big(\lambda_n(x'-y')\big)= \int_{z' \in x' -U'_1} \Phi_2(\lambda_n z')= \int_{z' \in \lambda_n^{-1}(x' -U'_1)} \Phi_2(z') = \int_{\C^l} \Phi_2$$
for every $x'$ in a fixed compact set  if  $n$ big enough because $\supp \Phi_2 \Subset \C^l$ which is contained in   $ \lambda_n^{-1}(x' -U'_1)$ if $|\lambda_n|$ is  big.   This together with (\ref{eq_tinhpVPsiT2}) implies 
\begin{align} \label{eq_tinhpVPsiT3}
\big \langle (A_{\lambda_n})_* (T \otimes [V]), \Phi \big \rangle &=  \big \langle T(x), \Phi_0(x') \wedge \Phi_3(\lambda_n x'') \wedge [\int_{\C^l} \Phi_2] \big \rangle+ o_{n\to \infty}(1)\\
\nonumber
&=\big \langle (A_{\lambda_n})_*T, \Phi_0(x') \wedge \Phi_3(x'') \wedge [\int_{\C^l} \Phi_2] \big \rangle+ o_{n\to \infty}(1) .   
\end{align}
Notice that 
$$\Phi|_{E_{\Delta,V}}= \Phi_0(x')\wedge  \Phi_2(z') \wedge \Phi_3(z'')$$
because $\Phi_1(0)=1.$ Using this and (\ref{eq_pullTinfty}) gives
\begin{align} \label{eq_tinhpVPsiT}
\big \langle p_V^* \Psi^* T_\infty, \Phi|_{E_{\Delta,V}} \big \rangle &= \big \langle T_\infty, \Psi_* (p_V)_* (\Phi|_{E_{\Delta,V}}) \big \rangle\\
\nonumber
&= \big \langle T_\infty, \Psi_* (p_V)_* [\Phi_0(x') \wedge \Phi_2(z') \wedge \Phi_3(z'')]\big \rangle\\
\nonumber
& = \big \langle T_\infty,  \Phi_0(x') \wedge \Phi_3(x'')[\int_{\C^l} \Phi_2] \big \rangle. 
\end{align}
Comparing (\ref{eq_tinhpVPsiT}) and (\ref{eq_tinhpVPsiT3}) gives $\lim_{n \to \infty} (A_{\lambda_n})_* (T \otimes [V])=p_V^* \Psi^* T_\infty.$ Consider now $\Phi_1$ is a form of degree $\ge 1.$
Then $\Phi|_{E_{\Delta,V}}=0.$ It follows that $\langle p_V^* \Psi^* T_\infty, \Phi \rangle =0.$ On the other hand,  we also see from (\ref{eq_tinhpVPsiT2})-(\ref{eq_tinhpVPsiT3}) that $\big \langle (A_{\lambda_n})_* (T \otimes [V]), \Phi \big \rangle \to 0$ as $n \to \infty.$ Consequently,  $\lim_{n \to \infty} (A_{\lambda_n})_* (T \otimes [V])=p_V^* \Psi^* T_\infty$ holds in the both cases.  

We now assume that $T'_\infty:=\lim_{n \to \infty} (A_{\lambda_n})_* (T \otimes [V])$ exists. Then, by choosing $\Phi_1(x'') \equiv 1$ in the above defining formula of $\Phi$ and using (\ref{eq_tinhpVPsiT2})-(\ref{eq_tinhpVPsiT}), we obtain that 
  $(A_{\lambda_n})_* T$  is of uniformly mass on compact subsets of $U' \times \C^{k-l}.$ Hence, there is a subsequence $(\lambda'_n)$ of $(\lambda_n)$ for which $(A_{\lambda'_n})_* T \to T_\infty$ for some $T_\infty.$ 

The first part of the proof then implies that $T'_\infty= p_V^* \Psi^* T_\infty.$ Hence,  $T_\infty$ is the unique limit current of the sequence $(A_{\lambda_n})_* T$. In other words, $\lim_{n \to \infty}(A_{\lambda_n})_* T= T_\infty$ and $T'_\infty= p_V^* \Psi^* T_\infty.$   This finishes the proof.
\endproof

Let $\sigma: \widehat X \to X$ be the blowup along $V$ of $X$ and $\widehat V:= \sigma^{-1}(V)$ the exceptional hypersurface. Recall that $\widehat V$ is naturally biholomorphic to $\P(E).$  Let $\sigma_E: \overline{\widehat E} \to \overline{E}$ the blowup along $V$ of $\overline E.$ The restriction of $\sigma_E$ to $\widehat E:= \sigma_E^{-1}(E)$ is the blowup along $V$ of $E.$     The projection $\pi$ induces naturally a vector bundle projection $\pi_{\widehat E}$ from $\widehat E$ to $\sigma_E^{-1}(V).$ The last map  can be extended to a projection $\pi_{\overline{\widehat E}}$  from  $\overline{\widehat E}$ to $\sigma_E^{-1}(V).$  
 The vector bundle $\pi_{\overline{\widehat E}}: \widehat E\to \sigma_E^{-1}(V)$ is naturally identified with the normal bundle of $\widehat V$ in $\widehat X.$  Hence we can identify $\sigma_E^{-1}(V)$ with $\widehat V$ and use $\widehat E$ as the normal bundle of $\widehat V$ in $\widehat X.$  

 Given any smooth admissible map $\tau: U \to E,$ by \cite[Le. 4.3]{Dinh_Sibony_density}, we can lift $\tau$ to a bi-Lipschitz almost-admissible map $\widehat{\tau}$ with 
\begin{align} \label{eq_lifttau}
\sigma_E  \circ \widehat \tau = \tau \circ \sigma_E.
\end{align}
Observe that the hypersurface at infinity $\widehat H_\infty$ of  $\overline{\widehat E}$ is biholomorphic to  that of $\overline E$ via $\sigma_E$. We  use $\widehat{\pi}_\infty$ to denote the natural projection from $\overline{\widehat E}\backslash \widehat V$ to $\widehat H_\infty.$ Since the rank of $\overline{\widehat E}$ over  $\widehat V$ is $1,$ we can extend $\widehat{\pi}_\infty$ to a projection from $\overline{\widehat E}$ to $\widehat H_\infty.$  Let $\widehat T$ be the pull-back of $T$ on $\widehat X  \backslash \widehat V$ by $\sigma|_{\widehat X \backslash \widehat V}.$ We assume from now on the following.\\
 
 \noindent
\textbf{(H): }    \emph{$\widehat T$ has locally finite mass near $\widehat V$ and there are countably many holomorphic admissible maps $\widehat \tau_j: \widehat U_j \to \widehat E$ with $j \in J$ such that  $\widehat V \subset \cup_{j \in J} \widehat U_j,$   $(A_\lambda)_* (\widehat \tau_j)_* \widehat T$ is of uniformly bounded mass on  compact subsets of $\pi_{\widehat E}^{-1}(\widehat U_j \cap \widehat U)$ as $|\lambda| \to \infty$ for every $j \in J.$}\\

\noindent
We will prove in  Theorem \ref{th_Hhold} at the end of this section   that the last assumption is satisfied  if $\supp T \cap V$ is compact  and there exists a Hermitian form $\omega$ on $X$ with $\ddc \omega^j=0$ on $V$ for $1 \le j \le k-p-1.$ This generalizes   the criteria given in  \cite[Th. 4.6, Le. 3.12]{Dinh_Sibony_density} for K\"ahler manifold $X$ where the above form $\omega$ is a K\"ahler form on $X.$   Another interesting  case where \textbf{(H)} is satisfied is when $T$ is a current of integration along an analytic subset of $X,$ see Theorem \ref{th_V1alongV} in the next section. 

Note that since $\widehat T$ has locally finite mass near $\widehat V,$ it can be extended trivially through $\widehat V$ to be a closed positive current on $\widehat X.$   We still use $A_\lambda$ to denote the multiplication by $\lambda \in \C^*$ in fibers of $\widehat E$ or $\overline{\widehat E}.$

By a diagonal argument, Hypothesis \textbf{(H)} ensures the existence of a tangent current  $\widehat T_\infty$  to $\widehat T$ along $\widehat V$ associated with  a sequence $(\lambda_n) \subset \C \to \infty.$ 
The following result is essentially contained in \cite{Dinh_Sibony_density}.

\begin{proposition} For any smooth admissible map $\tau: U \to E,$  the mass $(A_{\lambda})_* \tau_* T$ on compact subsets of $E|_{V \cap U}$ is uniformly bounded in $\lambda \in \C^*$ with $|\lambda| \gtrsim 1.$ Every tangent current  $T_\infty$   to $T$ along $V$ satisfies
\begin{align} \label{eq_TifntysigmaE}
T_\infty= (\sigma_E)_* \widehat T_\infty
\end{align}
for some tangent current $\widehat T_\infty$ of $\widehat T$ along $\widehat V.$ There exists a closed positive current $\widehat S_\infty$ on $\widehat H_\infty$ such that 
\begin{align}\label{eq_TifntysigmaE22}
\widehat T_\infty= \widehat{\pi}^*_\infty \widehat S_\infty, \quad T_\infty= \pi^*_\infty S_\infty
\end{align}
for $S_\infty:= (\sigma_E)_* \widehat S_\infty.$   
\end{proposition}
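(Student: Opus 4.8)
The plan is to reduce everything to the blowup picture, where the fiber of the normal bundle is one‑dimensional, and there all tangent currents are automatically pullbacks of currents at infinity. First I would establish the uniform mass bound for $(A_\lambda)_*\tau_* T$ on compact subsets of $E|_{V\cap U}$. Cover $V\cap U$ by finitely many charts; by Lemma \ref{le_globaladmiss} the limit is independent of the admissible map, so it suffices to bound the mass for some convenient choice. Lift $\tau$ to the almost‑admissible $\widehat\tau$ with $\sigma_E\circ\widehat\tau=\widehat\tau\circ\sigma_E$ as in \eqref{eq_lifttau}. Since $A_\lambda$ commutes with $\sigma_E$ (it is a fiberwise scaling, and the blowup is equivariant), we have $(A_\lambda)_*\tau_*T=(\sigma_E)_*(A_\lambda)_*\widehat\tau_*\widehat T$ on the relevant open set, because $T$ has no mass on $V$ and $\widehat T$ is the strict transform extended trivially. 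Hypothesis \textbf{(H)} gives uniform mass bounds for $(A_\lambda)_*\widehat\tau_*\widehat T$ on compact subsets of $\pi_{\widehat E}^{-1}(\widehat U_j\cap\widehat U)$; pushing forward by the proper map $\sigma_E$ preserves uniform mass bounds on compact sets, giving the first assertion.

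Next, for \eqref{eq_TifntysigmaE}: let $T_\infty$ be a tangent current with defining sequence $(\lambda_n)$. By the uniform mass bound just proved applied to $(A_{\lambda_n})_*\widehat\tau_*\widehat T$ (which follows from \textbf{(H)} plus a diagonal argument over $j\in J$), after passing to a subsequence we may assume $(A_{\lambda_n})_*\widehat\tau_*\widehat T$ converges to some closed positive current $\widehat T_\infty$ on $\widehat E$; by the diagonal construction this is a tangent current of $\widehat T$ along $\widehat V$. Applying $(\sigma_E)_*$ and using the commutation identity together with $(\sigma_E)_*\widehat T=T$ (true off $\widehat V$, hence everywhere since neither side charges $V$) yields $(\sigma_E)_*\widehat T_\infty=\lim_n(A_{\lambda_n})_*\tau_*T=T_\infty$.

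For \eqref{eq_TifntysigmaE22}: here is the one genuinely substantive point. The normal bundle $\widehat E$ of $\widehat V$ in $\widehat X$ is a \emph{line} bundle, so its fiberwise scaling action $A_\lambda$ has the property that any closed positive current on $\widehat E$ invariant under all $A_\lambda$ (which every tangent current is, being a limit of pushforwards under a semigroup that commutes with further scalings — the standard $V$‑invariance / $\lambda$‑invariance argument of Dinh–Sibony) must be constant along the fibers, i.e. a pullback $\widehat\pi_\infty^*\widehat S_\infty$ of a closed positive current on $\widehat H_\infty$. Concretely: $A_\lambda$‑invariance forces $\widehat T_\infty$ to have no mass on $\widehat V$ and to extend to $\overline{\widehat E}$ as an $A_\lambda$‑invariant closed positive current; on the rank‑one compactification $\overline{\widehat E}\to\widehat H_\infty$ the only such currents are $\widehat\pi_\infty^*$ of a closed positive current $\widehat S_\infty$ on $\widehat H_\infty$, because the $\mathbb C^*$‑orbits are the punctured fibers and an invariant current of the right bidegree on a $\mathbb P^1$‑bundle fixing the two sections is pulled back from the base. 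Then set $S_\infty:=(\sigma_E)_*\widehat S_\infty$, a closed positive current on $H_\infty\cong\P(E)$, and apply $(\sigma_E)_*$ to $\widehat T_\infty=\widehat\pi_\infty^*\widehat S_\infty$; since $\widehat\pi_\infty$ and $\pi_\infty$ are intertwined by $\sigma_E$ (as noted in the text, $\sigma_E$ restricts to a biholomorphism of the hypersurfaces at infinity and commutes with the projections to them), functoriality of pullback–pushforward gives $T_\infty=(\sigma_E)_*\widehat\pi_\infty^*\widehat S_\infty=\pi_\infty^*(\sigma_E)_*\widehat S_\infty=\pi_\infty^*S_\infty$.

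The main obstacle is the $A_\lambda$‑invariance step and the attendant structure result on the rank‑one bundle: one must verify carefully that the limit tangent current is honestly invariant under the full $\mathbb C^*$‑action (not merely under a subsequence of scalings), extends across $\widehat V$ and to infinity without extra mass, and that on $\overline{\widehat E}$ invariance forces the pullback form — this is exactly where the Kähler hypothesis was used in \cite{Dinh_Sibony_density} and where \textbf{(H)} together with the line‑bundle geometry must substitute for it. Everything else is bookkeeping with the proper pushforward $\sigma_E$, the commutation of $A_\lambda$ with the equivariant blowup, and the fact that $T$, $\widehat T$ carry no mass on $V$, $\widehat V$ respectively.
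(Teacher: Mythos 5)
Your overall architecture coincides with the paper's: transfer everything to the blowup via the lift $\widehat\tau$ of $\tau$ and the identity $\langle (A_\lambda)_*\widehat\tau_*\widehat T,\sigma_E^*\Phi\rangle=\langle (A_\lambda)_*\tau_*T,\Phi\rangle$ (note the correct intertwining is $\sigma_E\circ\widehat\tau=\tau\circ\sigma_E$, not $\widehat\tau\circ\sigma_E$), obtain the uniform mass bound from \textbf{(H)} together with Lemma \ref{le_globaladmiss}, extract $\widehat T_\infty$ along a subsequence, and then exploit the rank-one structure of $\widehat E$. The first two assertions of the proposition are handled correctly this way.

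The gap is precisely in the step you flag as ``the main obstacle'', and it is a genuine one. Your justification for the $\C^*$-invariance of $\widehat T_\infty$ --- that it is ``a limit of pushforwards under a semigroup that commutes with further scalings'' --- does not work: $(A_t)^*\widehat T_\infty=\lim_n (A_{\lambda_n/t})_*\widehat\tau_*\widehat T$ is a limit along the \emph{different} sequence $(\lambda_n/t)$, and since tangent currents are a priori non-unique this only shows that $(A_t)^*\widehat T_\infty$ is \emph{some} tangent current, not that it equals $\widehat T_\infty$. The paper's actual argument is cohomological: because $H^2_c(\C,\C)$ is one-dimensional, one writes $(A_t)^*\Phi_2-\Phi_2=d\Theta+a\,dx_k+b\,d\overline x_k$ with $\Theta$ compactly supported, integrates by parts using the closedness of the current, and applies Cauchy--Schwarz; the decisive point is that one factor in the resulting bound is $\langle (A_\lambda)_*T,\mathbf{1}_{\supp\Phi}(\ddc\widehat x')^{k-l}\rangle$, whose support collapses onto the exceptional divisor where the current carries no mass, so the error tends to $0$. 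Similarly, your structural claim that an invariant closed positive current on the $\P^1$-compactification must be $\widehat\pi_\infty^*\widehat S_\infty$ is asserted rather than proved; the paper establishes it by showing $\widehat T_\infty\wedge\pi_{\widehat E}^*(\theta|_{\widehat V})=0$ for every smooth closed positive $(k-p,k-p)$-form $\theta$ (a measure argument combining conicity with the absence of mass on $\widehat V$), deducing from positivity that no component of $\widehat T_\infty$ involves $dt$ or $d\overline t$, and then using closedness to get $t$-independence of the coefficients; the passage from $\pi_{\widehat E}^*$ to $\widehat\pi_\infty^*$ uses that the corresponding fibers differ in only two points. Without these two arguments the proof of the third assertion is incomplete.
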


Since $\pi_\infty$ is only a submersion from $E \backslash V $ to $H_\infty,$   in the second equality of (\ref{eq_TifntysigmaE22}),  the current $\pi^*_\infty S_\infty$ is \`a priori a closed positive current on $E \backslash V$ which can be extended to be a current on $E$ trivially through $V$ because it has locally bounded mass there. A direct consequence of (\ref{eq_TifntysigmaE22}) is that $T_\infty$ can be extended to be a closed positive current on $\overline E$ having  no mass on $V.$ We still have that the de Rham cohomology of every tangent current $T_\infty$ is the same  as in \cite{Dinh_Sibony_density}. But we don't need to use that fact in this paper. 

\proof   Although the desired assertions can be deduced more or less by using similar arguments from \cite{Dinh_Sibony_density}, we opt to give a complete proof for the readers' convenience. 

Let $\tau$ be as in the statement and $\widehat \tau: \widehat U \to \widehat E$ the lift of $\tau$ to $\widehat U= \sigma_E^{-1}(U)$ as above. By (\ref{eq_lifttau}) and the fact that $T, \widehat T$ have no mass on $V, \widehat V$ respectively, we have
\begin{align} \label{eq_taulift}
\big \langle (A_\lambda)_* \widehat \tau_* \widehat T, \sigma_E^* \Phi \big \rangle = \big\langle (A_\lambda)_* \tau_* T, \Phi \big\rangle
\end{align}
for every smooth form $\Phi$ with compact support in $E|_{V \cap U}.$   Recall that $\widehat \tau$ is almost-admissible.    By Lemma \ref{le_globaladmiss} and \textbf{(H)},  the mass of $(A_\lambda)_* \widehat \tau_* \widehat T$ is uniformly bounded on compact subsets of $\pi_{\widehat E}^{-1}(\widehat U \cap \widehat V).$ Using this and (\ref{eq_taulift}) implies the first desired assertion. 

Let $T_\infty$ be a tangent current with the defining sequence $(\lambda_n).$ By using a subsequence of $(\lambda_n)$ if necessary, we can assume also that $(A_{\lambda_n})_* \widehat \tau_* \widehat T$ converges to a tangent current $\widehat T_\infty$ of $\widehat T$ along $\widehat V.$  Substituting $\lambda=\lambda_n$ in (\ref{eq_taulift}) and letting $n \to \infty$ give
\begin{align} \label{eq_taulift2}
\big \langle \widehat T_\infty, \sigma_E^* \Phi \big \rangle = \big\langle T_\infty, \Phi \big\rangle.
\end{align}
Hence, the equality (\ref{eq_TifntysigmaE}) follows.  

We claim  that  $\widehat T_\infty$ have no mass on $\widehat V$. To see it, let $\big(\widehat U'_1 \times \widehat U''_1,(\widehat x',\widehat x_k)\big)$ be a  relatively compact local chart of $\widehat X$  with $\widehat x'=(\widehat x_1, \ldots, \widehat x_{k-1})$   so that $\widehat V$ is given by $\widehat x_k=0.$ 
Since the restriction of $\widehat T_\infty$ to $\widehat V$ is a closed positive current, it is the pushforward of a closed positive current on $\widehat V.$ It follows that the mass of $\widehat T$ on $\widehat V\cap (\widehat U'_1 \times \widehat U''_1)$ is 
\begin{align*}
& \lesssim \langle \widehat T_\infty, \mathbf{1}_{\widehat U'_1 \times \widehat U''_1}(\ddc \|\widehat x'\|)^{k-l} \rangle= \lim_{n \to \infty}\langle \widehat T, A_{\lambda_n}^*(\mathbf{1}_{\widehat U'_1 \times \widehat U''_1}(\ddc \|\widehat x'\|)^{k-l}) \rangle\\
&= \lim_{n \to \infty}\langle \widehat T, (\mathbf{1}_{\widehat U'_1 \times (\lambda_n^{-1} \widehat U''_1)}(\ddc \|\widehat x'\|)^{k-l}) \rangle=0
\end{align*}  
because $\widehat T$ has no mass on $\widehat V.$

We now prove (\ref{eq_TifntysigmaE22}).  To this end,  we first  check that $\widehat T_\infty$ is $\widehat V$-conic, \emph{i.e,} $(A_t)^* \widehat T_\infty = \widehat T_\infty$ for every $t \in \C^*$.  
Let the local coordinates be as above. Let $\Phi= \Phi_1(\widehat x') \wedge \Phi_2(\widehat x_k)$ be a smooth form with compact support on $\widehat U'_1 \times \C$.  Observe that 
$(A_t)^*\Phi_2$ and  $\Phi_2$ belongs to the same cohomology class with compact support in $\C$ because their integrals over $\C$ are equal.  

Since $H^2_c(\C, \C)$ is of dimension $1,$ there exists a smooth $1$-form $\Theta(x_k)$ with compact support on $\C$ for which $(A_t)^*\Phi_2- \Phi_2 -  d\Theta$ is a $1$-form in $x_k,$ \emph{i.e,} 
$$(A_t)^*\Phi_2- \Phi_2 - d\Theta= a(x_k) d x_k+ b(x_k) d\overline x_k.$$       
Using this, the closedness of $T$ and Cauchy-Schwarz inequality, we obtain that
\begin{align*}
\bigg|\big\langle (A_\lambda)_*T, \Phi_1 \wedge((A_t)^*\Phi_2- \Phi_2)  \big\rangle\bigg|  &=\bigg|\big\langle (A_\lambda)_*T, \Phi_1 \wedge ((A_t)^*\Phi_2- \Phi_2- d \Theta) \pm  d \Phi_1 \wedge \Theta  \big\rangle\bigg|  \\
&\lesssim \big\langle (A_\lambda)_* T, (\ddc \widehat x)^{k-l} \big\rangle \big\langle  (A_\lambda)_* T, \mathbf{1}_{\supp \Phi}(\ddc \widehat x')^{k-l}\big\rangle.
\end{align*}
The first term in the right-hand side of the last inequality is uniformly bounded in $\lambda$, whereas the second one converges to $0$ as $\lambda \to \infty$ because $(\ddc \widehat x')$ is invariant by $A_\lambda$ and $\supp (A_\lambda)^* \Phi$ converges to $V.$  Letting $\lambda \to \infty$ gives  $ (A_\lambda)_*T_\infty = T_\infty.$ 

Observe that if $\theta$ is a smooth closed positive form on $\widehat U_1:= \widehat U'_1 \times \widehat U''_1$ then $\widehat T_\infty \wedge \pi_{\widehat E}^*(\theta|_V)$ is a tangent current of $\widehat T \wedge \theta$ along $\widehat V$ on $\widehat U_1$ with the same defining sequence $(\lambda_n)$.  Choose a such $\theta$ of bidegree $(k-p,k-p).$ We obtain that $\mu:=\widehat T_\infty \wedge \pi_{\widehat E}^*(\theta|_{ \widehat V})$  is a nonnegative measure on $\pi_{\widehat E}^{-1}(\widehat U_1 \cap \widehat V)$ which has no mass on $\widehat V$ because  $\widehat T_\infty$ has no mass on $\widehat V.$  

 On the other hand,  since $\widehat T_\infty \wedge \pi_{\widehat E}^*(\theta|_{ \widehat V})$ is $\widehat V$-conic, for any smooth  positive cut-off function $\chi(\widehat x', \widehat x_k)$ supported on $\widehat U_1,$ we have 
$$\langle \mu, \chi \rangle= \langle \mu, A_t^* \chi \rangle$$
for every $t \in \C^*.$ Letting $t \to \infty$ in the last equality and observing that the limit supremum of the uniformly bounded functions $A_t^* \chi $ as $|t| \to \infty$ is  a function supported on $\pi_{\widehat E}(\supp \chi) \subset \widehat V \cap \widehat U_1,$ we obtain  
$$|\langle \mu, \chi \rangle| \lesssim \|\mathbf{1}_{\widehat V \cap \widehat U_1}\mu\|=0.$$
Thus, $\mu=0.$ Or in other words,  $\widehat T_\infty \wedge \pi_{\widehat E}^*(\theta|_{ \widehat V})=0$ for every smooth closed positive form $\theta$ of bidegree $(k-p,k-p).$ It follows that in the local coordinates $(\widehat x',t)$ of $\widehat E,$ the current $\widehat T_\infty$ must have the following form:
\begin{align}\label{eq_vietTinfyalpha}
\widehat T_\infty= \sum_{I, I'} \alpha_{I,I'}(\widehat x', t) d \widehat x'_I \wedge d \overline{\widehat x}'_{I'},
\end{align}
for some Radon measures $\alpha_{I,I'}$ on $\widehat E,$ where the sum is taken over $I,I'$ with $I, I' \subset \{1, \ldots, k-1\}$ and $I,I'$ are of cardinality $p.$  Since $\widehat T_\infty$ is closed, $\alpha_{I,I'}$ is independent of $t.$ As a result, we obtain a current $\widehat S_\infty$ on $\widehat U_1$ for which $\widehat T_\infty = \pi^*_{\widehat E} \widehat S_\infty$ on $\pi^{-1}_{\widehat E}(\widehat U_1).$ The last formula tells us that $\widehat S_\infty$ is independent of local coordinates and local charts. Hence, $\widehat S_\infty$ is a well-defined closed positive current on $\widehat V$ for which  $\widehat T_\infty = \pi^*_{\widehat E} \widehat S_\infty.$ 

Now notice that   $(\pi_{\widehat E})|_{\widehat H_\infty}$ is a biholomorphism between $\widehat H_\infty$  and $\widehat V.$ So we  can identify these two submanifolds via that biholomorphism. We then view $S_\infty$ as a current on $\widehat H_\infty.$ Observe now the fiber of $\pi_{\widehat E}$ at $\widehat x\in V$ only differs to the fiber of $\widehat \pi_\infty$ at $(\pi_{\widehat E})|_{\widehat H_\infty}^{-1}(\widehat x)$ at  two points. This implies that   $\widehat T_\infty = \widehat\pi^*_{\infty} \widehat S_\infty.$ This proves the first equality of (\ref{eq_TifntysigmaE22}). The second equality follows directly from the first one and the following formulae:
$$T_\infty=[(\sigma_E|_{\widehat E \backslash \widehat V})^{-1}]^* (\widehat T_\infty), \quad \widehat \pi_\infty \circ (\sigma_E|_{\widehat E \backslash \widehat V})^{-1}= (\sigma_E|_{\widehat E \backslash \widehat V})^{-1}\circ \pi_\infty.$$ 
This ends the proof.   
\endproof

Recall that $[\widehat V]$ is a current of bidegree $(1,1)$. Thus, $[\widehat V]$ can be represented as the sum $\ddc \widehat u+ \beta$ for some smooth form $\beta$ and some quasi-plurisubharmonic function $\widehat u$ on $\widehat X.$ A such $\widehat u$ is called a potential of $[\widehat V].$

\begin{proposition} \label{pro_uniquetangentcurent} Assume that $\widehat T \wedge [\widehat V]$ is well-defined in the classical sense, that means that  potentials of $[\widehat V]$ are locally integrable with respect to the trace measure of $\widehat T.$ Then the tangent current to $T$ along $V$ is unique and is given by  
$$T_\infty= \pi_\infty^* (\widehat T \wedge [\widehat V]),$$
where recall that we identified $\widehat V$ with $\P(E)$ and identified $\widehat T \wedge [\widehat V]$ with a current on $\widehat V.$  
\end{proposition}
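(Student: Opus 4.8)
The plan is to reduce, via the blow-up $\sigma\colon\widehat X\to X$, to the case of the \emph{smooth hypersurface} $\widehat V$, and there to exploit that the classical wedge product with $[\widehat V]$ is invariant under the fibrewise dilations $A_\lambda$ and varies continuously along a defining sequence. Since Hypothesis \textbf{(H)} is in force, the preceding proposition applies: every tangent current $T_\infty$ of $T$ along $V$ can be written $T_\infty=(\sigma_E)_*\widehat T_\infty=\pi_\infty^*S_\infty$ with $S_\infty=(\sigma_E)_*\widehat S_\infty$, where $\widehat T_\infty=\widehat\pi_\infty^*\widehat S_\infty$ is a tangent current of $\widehat T$ along $\widehat V$, say with defining sequence $(\lambda_n)$, and the masses of $(A_\lambda)_*\widehat\tau_*\widehat T$ on compact sets stay uniformly bounded as $|\lambda|\to\infty$. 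Under the identifications $\widehat V\simeq\widehat H_\infty\simeq H_\infty=\P(E)$, it is therefore enough to show that \emph{every} such subsequential limit satisfies $\widehat S_\infty=\widehat T\wedge[\widehat V]$; this value is independent of all choices, so $\widehat T_\infty$, hence $T_\infty=(\sigma_E)_*\widehat T_\infty$, is unique, and the asserted formula $T_\infty=\pi_\infty^*(\widehat T\wedge[\widehat V])$ then follows from $T_\infty=\pi_\infty^*S_\infty$.

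I would then work in a local holomorphic chart $(\widehat x',t)$ of $\widehat X$ with $\widehat V=\{t=0\}$ in which the admissible map identifies the fibre coordinate of $\widehat E$ with $t$, and put $\widehat T_\lambda:=(A_\lambda)_*\widehat\tau_*\widehat T$, so $\widehat T_\infty=\lim_n\widehat T_{\lambda_n}$. Because the exceptional divisor of a blow-up along a smooth centre is reduced, locally $[\widehat V]=\ddc\log|t|$; as $\widehat T\wedge[\widehat V]$ is classically defined, $\log|t|$ is locally integrable against the trace measure of $\widehat T$ (hence also against that of each $\widehat T_\lambda$, after the change of variables $A_\lambda$), and $\widehat T\wedge[\widehat V]=\ddc(\log|t|\,\widehat T)$, $\widehat T_\lambda\wedge[\widehat V]=\ddc(\log|t|\,\widehat T_\lambda)$. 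From the identity $(A_\lambda)_*(\log|t|\,\widehat T)=(\log|t|-\log|\lambda|)\,\widehat T_\lambda$, the closedness of $\widehat T$ and $\widehat T_\lambda$, and the commutation of $\ddc$ with the (smooth) pushforward $(A_\lambda)_*$, one deduces $\widehat T_\lambda\wedge[\widehat V]=(A_\lambda)_*(\widehat T\wedge[\widehat V])$; and since $\widehat T\wedge[\widehat V]$ is a closed positive current supported on $\widehat V$, hence of the form $\imath_*\widehat R$ with $\imath\colon\widehat V\hookrightarrow\widehat X$, while $A_\lambda|_{\widehat V}=\id$, it follows that
\[
\widehat T_\lambda\wedge[\widehat V]=\widehat T\wedge[\widehat V]\qquad\text{for every }\lambda\in\C^*.
\]

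The remaining, and genuinely delicate, step will be to pass to the limit along $(\lambda_n)$, that is, to prove $\log|t|\,\widehat T_{\lambda_n}\to\log|t|\,\widehat T_\infty$ weakly. I would truncate $\log|t|=\log_\epsilon|t|+(\log|t|-\log_\epsilon|t|)$ with $\log_\epsilon|t|:=\max(\log|t|,\log\epsilon)$: the contribution of $\log_\epsilon|t|$ converges because $\log_\epsilon|t|$ is continuous and bounded on compacts while the masses of $\widehat T_{\lambda_n}$ are locally uniformly bounded, and the contribution of $\log|t|-\log_\epsilon|t|$ is estimated, after the change of variables $A_{\lambda_n}$ and using that $\widehat T$ has no mass on $\widehat V$, by the integral of $\log(\epsilon/|\lambda_n|)-\log|t|$ over $\{|t|<\epsilon/|\lambda_n|\}$ against the trace measure of $\widehat T$ (together with an analogous fibre-direction term carrying a compensating factor $|\lambda_n|^2$); by dominated convergence — this is exactly where the local integrability of $\log|t|$ against the trace of $\widehat T$ is used, together with the monotonicity in $\delta$ of $\delta\mapsto\int_{\{|t|<\delta\}}(\log\delta-\log|t|)$ — such integrals are bounded by a quantity that depends only on $\epsilon$ and tends to $0$ as $\epsilon\to0$, uniformly in $n$. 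Applying $\ddc$ then yields $\widehat T_\infty\wedge[\widehat V]=\lim_n\widehat T_{\lambda_n}\wedge[\widehat V]=\widehat T\wedge[\widehat V]$. On the other hand, since $\widehat T_\infty=\pi_{\widehat E}^*\widehat S_\infty$ is pulled back from $\widehat V$, a direct local computation using the form \eqref{eq_vietTinfyalpha} of $\widehat T_\infty$ shows that its intersection with the zero section is $\widehat T_\infty\wedge[\widehat V]=\widehat S_\infty$. Combining, $\widehat S_\infty=\widehat T\wedge[\widehat V]$, which gives uniqueness and, after transporting through $\sigma_E$ and $\pi_\infty$, the stated formula. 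The hard part will be the uniform mass control near $\widehat V$ underlying the convergence $\log|t|\,\widehat T_{\lambda_n}\to\log|t|\,\widehat T_\infty$; the rest is bookkeeping with the identifications set up in the preceding proposition and lemmas.
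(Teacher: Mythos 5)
Your argument is correct in outline but follows a genuinely different route from the paper's. The paper disposes of the proposition in three lines: it invokes the result of \cite{Viet_Lucas} that the density current of $\widehat T\otimes[\widehat V]$ along the diagonal of $\widehat X\times\widehat X$ is unique and equal to $\pi_{\Delta_{\widehat X}}^*(\widehat T\wedge[\widehat V])$ whenever the wedge product is classically defined, and then translates back through Lemma \ref{le_deltatangecurr} and (\ref{eq_TifntysigmaE22}). You instead re-prove the special (divisor) case of that external result directly on the blow-up: the identity $(A_\lambda)_*(\log|t|\,\widehat T)=(\log|t|-\log|\lambda|)\widehat T_\lambda$, the closedness of the currents, and the support theorem give the dilation-invariance $\widehat T_\lambda\wedge[\widehat V]=\widehat T\wedge[\widehat V]$, and the truncation $\log_\epsilon|t|$ lets you pass to the limit; combined with $\widehat T_\infty\wedge[\widehat V]=\widehat S_\infty$ for $\widehat T_\infty=\pi_{\widehat E}^*\widehat S_\infty$, this pins down $\widehat S_\infty$ and hence $T_\infty$. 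What your route buys is self-containedness (no appeal to the general uniqueness theorem of \cite{Viet_Lucas}, only to the structure results already proved in Section 2); what it costs is that you must actually carry out the weighted mass estimates near $\widehat V$. On that point your sketch is thinnest: for the components of $A_{\lambda_n}^*\Phi$ carrying $dt$ or $dt\wedge d\bar t$, dominated convergence against the trace measure does \emph{not} suffice, because of the Jacobian factors $|\lambda_n|$ and $|\lambda_n|^2$; you need the quantitative bound $\int_{\{|t|<s\}}\widehat T\wedge i\,dt\wedge d\bar t\wedge\omega'^{\,q-1}\lesssim s^2$ furnished by Hypothesis \textbf{(H)} (it is exactly the estimate established in the proof of Theorem \ref{th_Hhold}), together with an integration by parts $\int_0^\delta\log(\delta/r)\,d\nu(r)=\int_0^\delta\nu([0,s])\,s^{-1}ds\lesssim\delta^2$ to absorb the logarithmic weight, and a Cauchy--Schwarz argument for the mixed terms. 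You correctly flag this as the hard part, and the ingredients are all available, so I regard it as a step to be written out rather than a gap.
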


\proof   By \cite{Viet_Lucas}, there is a  unique tangent current $R$ of $\widehat T \otimes [\widehat V]$ along the diagonal $\Delta_{\widehat X}$ of $\widehat X \times \widehat X$ and $R$ is equal to $\pi_{\Delta_{\widehat X}}^*(\widehat T \wedge [\widehat V])$ by identifying $\widehat X$ with $\Delta_{\widehat X},$ where  $\pi_{\Delta_{\widehat X}}$ is the projection from the normal bundle $E_{\Delta_{\widehat X}}$ onto $\Delta_{\widehat X}.$  Hence, by Lemma \ref{le_deltatangecurr}, there exists uniquely a tangent current of $\widehat T$ along $\widehat V$ which is given by $\pi_{\widehat E}^*(\widehat T \wedge [\widehat V]).$ This combined with (\ref{eq_TifntysigmaE22}) yields that
$$S_\infty=\widehat T \wedge [\widehat V] .$$
The desired equality then follows. 
 The proof is finished.
\endproof



Let $\omega$ be a positive definite Hermitian form on $X.$ Let $U$ be a relatively  compact subset of $X.$  If $\codim V \ge 2,$ let $\widehat\omega_h$ be a Chern form of $\mathcal{O}(-\widehat V)$ whose restriction to each fiber of  $\widehat{V} \approx \P(E)$ is strictly positive, otherwise we simply put $\widehat \omega_h:= 0.$    By scaling $\omega$ if necessary, we can assume that  $\widehat \omega:=  \sigma^* \omega+ \widehat \omega_h>0$ on $\widehat U:=\sigma^{-1}(U).$   The following result generalizes \cite[Th. 4.6]{Dinh_Sibony_density}. 

\begin{theorem} \label{th_Hhold} Let $X$ be a complex manifold.  Let $T$ be a closed positive current of bidimension $(q,q)$ on $X$ for $q \ge 0$  and $V$ a smooth submanifold of $X.$  Assume that 

$(i)$ $\supp T \cap V$ is compact and $T$ has no mass on $V$,

$(ii)$ there exists a Hermitian form $\omega$ on $X$ for which $\ddc \omega^j=0$ on $V$ for $1\le j \le q-1.$
\\
Then Assumption \textbf{(H)} holds for $X,V,T$. Moreover, given any compact $K$ and relatively compact open subset $U$ in $X$ such that  $K\subset U$ and $\supp T \cap V \subset K$,    there exists a constant $c$ independent of $T$ for which 
\begin{align}\label{ine_daudauth27Tnga}
\|\widehat T\|_{\widehat K} \le c \|T\|_{U},\quad  \|T_\infty\| \le c \|T\|_U,
\end{align}
for every tangent current $T_\infty$ of $T$ along $V,$ where $\widehat K:= \sigma^{-1}(K).$ 
\end{theorem}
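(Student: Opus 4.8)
The plan is to adapt the proof of \cite[Le. 3.12, Th. 4.6]{Dinh_Sibony_density}: the K\"ahler hypothesis used there ($d\omega=0$, so that all $\omega^j$ are closed) enters the mass estimates only through the weaker condition that $\ddc\omega^j$ vanish \emph{along} $V$, which is exactly assumption $(ii)$. Fix $K\subset U$ as in the statement; cover a neighborhood of $K$ by finitely many coordinate charts of $X$, on each take a holomorphic admissible map into $E$, and by \cite[Le. 4.3]{Dinh_Sibony_density} lift it to a bi-Lipschitz almost-admissible map $\widehat\tau_j:\widehat U_j\to\widehat E$ with $\widehat V\subset\bigcup_j\widehat U_j$. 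By \eqref{eq_taulift} and Lemma \ref{le_globaladmiss} it then suffices to prove, with a constant $c$ depending only on $X,V,\omega,K,U$: (a) $\widehat T$ has locally finite mass near $\widehat V$ and $\|\widehat T\|_{\widehat K}\le c\|T\|_U$; (b) the mass of $(A_\lambda)_*(\widehat\tau_j)_*\widehat T$ on any fixed compact subset of $\pi_{\widehat E}^{-1}(\widehat U_j\cap\widehat V)$ is $\le c\|T\|_U$ for $|\lambda|\gtrsim1$. Granting (a) and (b), Hypothesis \textbf{(H)} holds together with the first inequality of \eqref{ine_daudauth27Tnga}; and since, by \eqref{eq_TifntysigmaE}, every tangent current of $T$ along $V$ equals $(\sigma_E)_*\widehat T_\infty$ for a weak limit $\widehat T_\infty$ of currents $(A_{\lambda_n})_*(\widehat\tau_j)_*\widehat T$, taking masses and using that pushforward by the proper map $\sigma_E$ multiplies masses by at most a fixed constant over the compact set $\supp T\cap V$ yields the second inequality of \eqref{ine_daudauth27Tnga}.

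The heart of the matter is the uniform estimate behind (a) and (b). After scaling $\omega$ we may assume $\widehat\omega:=\sigma^*\omega+\widehat\omega_h>0$ on $\widehat U$; since $\widehat\omega_h$ is closed (a Chern form) and $\ddc(\sigma^*\omega)^j=\sigma^*(\ddc\omega^j)$ vanishes on $\widehat V=\sigma^{-1}(V)$, being the pullback of a form that vanishes on $V$, one gets $\ddc\widehat\omega^j=0$ on $\widehat V$ for $1\le j\le q-1$. Both bounds follow from estimating pairings of $\widehat T$ — for (a), of $T$ on $X\setminus V$ — against smooth test forms supported in a shrinking neighborhood of $\widehat V$ (resp. $V$): for (b), against $\widehat\tau_j^*(A_\lambda)^*\Phi$, where $\Phi$ is a product of powers of $\sigma^*\omega$ and of a fixed Chern form of the relative $\mathcal{O}(1)$ of $\overline{\widehat E}\to\widehat V$, supported in a tube of radius $O(|\lambda|^{-1})$ about $\widehat V$. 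Using that $\widehat T$ (resp. $T$) is closed, a Stokes/slicing computation rewrites such a pairing, up to errors, as a quantity independent of $\lambda$ (resp. of the tube size) and $\lesssim\|\widehat T\|_{\widehat K}$ — hence, via (a), $\lesssim\|T\|_U$ — (resp. directly $\lesssim\|T\|_U$). The errors are of two kinds: integrals over the shrinking region of $\widehat T\wedge\ddc\widehat\omega^j\wedge(\cdots)$, which tend to $0$ because $\ddc\widehat\omega^j\to0$ as one approaches $\widehat V$; and boundary contributions governed by the Lelong numbers of $T$ along $V$, which are $\lesssim\|T\|_U$ by the standard comparison of Lelong numbers with mass. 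This gives (a) and (b) with $c$ independent of $T$.

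The hard part is exactly this last estimate: carrying out the integration by parts on the tubes around $\widehat V$, checking that \emph{every} error integrand is dominated either by something tending to $0$ — which is precisely where $(ii)$, rather than full K\"ahlerness, is what is needed — or by a Lelong-type term $\lesssim\|T\|_U$, and, most delicately, keeping all constants independent of $T$. Granting it, the reductions of the first paragraph turn the estimate into Hypothesis \textbf{(H)} and the inequalities \eqref{ine_daudauth27Tnga} by formal manipulations based on Lemma \ref{le_globaladmiss} and \eqref{eq_TifntysigmaE}--\eqref{eq_TifntysigmaE22}.
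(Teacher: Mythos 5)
Your first paragraph reproduces the paper's reduction faithfully: everything comes down to (a) the bound $\|\widehat T\|_{\widehat K}\lesssim\|T\|_U$ and (b) the uniform mass bound for $(A_\lambda)_*(\widehat\tau_j)_*\widehat T$, after which Lemma \ref{le_globaladmiss} and (\ref{eq_TifntysigmaE}) give Hypothesis \textbf{(H)} and (\ref{ine_daudauth27Tnga}); and you correctly identify that the K\"ahler hypothesis of \cite[Th. 4.6]{Dinh_Sibony_density} enters only through $\ddc\omega^j=0$ on $V$. But the proof stops where the theorem begins: you label the central estimate ``the hard part,'' proceed by ``granting it,'' and the sketch you offer of it does not describe an argument that closes. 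There are no ``boundary contributions governed by the Lelong numbers of $T$ along $V$'' in a correct proof: the integration by parts is performed globally, with $\supp T\cap\supp\varphi_M$ compact in $X$ by assumption $(i)$, so Stokes applies with no boundary term and Lelong numbers never appear. Nor do the error terms ``tend to $0$ because $\ddc\widehat\omega^j\to0$ as one approaches $\widehat V$'' --- they are integrals against $T$ wedged with singular potentials, whose mass near $V$ is precisely the quantity under estimation, so pointwise smallness of a factor proves nothing by itself; in fact these terms are only shown to be \emph{uniformly bounded}, not small.

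The mechanism you are missing is potential-theoretic. For (a): write $\sigma_*\widehat\omega_h=\ddc\varphi+\eta$ with $\varphi$ quasi-p.s.h., compactly supported near $V$, and $\varphi(x)=\log\dist(x,V)+O(1)$; then $\int_{\widehat K}\widehat T\wedge\widehat\omega^{q}=\int_{K\setminus V}T\wedge(\ddc\varphi+\eta+\omega)^q$. Truncate $\varphi_M:=\max\{\varphi,-M\}$, expand the resulting integral into finitely many terms $A_{M,j}=\int_U T\wedge(\ddc\varphi_M)^j\wedge\omega_j$ where the smooth forms $\omega_j$ satisfy $\ddc\omega_j=0$ on $V$ by hypothesis $(ii)$, and prove $|A_{M,j}|\lesssim\|T\|_U$ by induction on $j$: Stokes converts $A_{M,j}$ into $\int_U T\wedge(\ddc\varphi_M)^{j-1}\wedge\varphi_M\,\ddc\omega_j$, and the one place $(ii)$ is used is the uniform bound $\|\varphi_M\,\ddc\omega_j\|_{\cali{C}^0}\lesssim 1$ independent of $M$ --- the logarithmic singularity of $\varphi$ is compensated by the vanishing of $\ddc\omega_j$ along $V$. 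For (b) the same scheme runs with a potential $u$ of $[\widehat V]$, a convex truncation $\phi_\lambda=\chi_\lambda\circ u$, the pointwise inequality $\ddc|\widehat x_1|^2\le c\,|\lambda|^{-2}(\ddc\phi_\lambda+\widehat\omega)$ on the tube $\{|\widehat x_1|\lesssim|\lambda|^{-1}\}$, and a Cauchy--Schwarz step to control the mixed terms $\widehat T\wedge d\widehat x_1$ by $\|\widehat T\|^{1/2}\,\|\widehat T\wedge\ddc|\widehat x_1|^2\|^{1/2}$. Until this computation (or a genuine substitute) is carried out, the argument is a plan rather than a proof.
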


Here for every   current  $S$ of order $0$ on $X$ and $U \subset X,$   $\|S\|_U$ denotes the mass of $S$ on $U.$

\proof  We use ideas from \cite{Dinh_Sibony_density}.  If $q=0$, then $T$ is a measure having no mass on $V$ and $T_\infty=0.$ The desired assertions obviously hold.    Consider now $q\ge 1.$ We first show that the mass of $\widehat T$ is locally bounded near $\widehat V.$    Let $W, W_T$ be  open neighborhoods of $V, \supp T$ respectively  such that $W_T\cap W$ is relatively compact in $X$.    Fix $\omega$ as in Assumption $(ii)$ and $\widehat \omega_h, \widehat \omega$ as above. We can assume also that  $K\subset W_T\cap W\Subset U$.  

Let $\varphi$ be a quasi-p.s.h. function $\varphi$ on $X$ such that $\supp \varphi \subset W$ and  
\begin{align}\label{eq_bieuthucomegah}
\sigma_* \widehat \omega_h= \ddc \varphi + \eta
\end{align}
 for some smooth closed form $\eta$ on $X.$ By multiplying $\widehat \omega_h$ by a strictly positive constant, we have  $$\sigma^* \sigma_* \widehat \omega_h= \widehat \omega_h+  [\widehat V]$$
 if $\codim V \ge 2$. Indeed one only needs to check it locally. Hence, we can reduce this question to the K\"ahler case where the desired identity is already known. Thus  we have
\begin{align} \label{ine_singvarphi}
|\varphi(x) - \log\dist(x, V)|  \lesssim 1
\end{align}
on compact subsets of $U$ provided that $\codim V \ge 2.$ 

Put $\widehat W: = \sigma^{-1}(W)$ and $\widehat W_T: = \sigma^{-1}(W_T).$ Clearly $\widehat W_T \cap \widehat W \Subset \widehat X.$ If $V$ is a hypersurface, the first inequality of (\ref{ine_daudauth27Tnga}) is clear because $\widehat T= T.$ Consider $\codim V \ge 2.$   Since $\widehat T$ has no mass on $\widehat V,$ using the fact that $\sigma_* \widehat{\omega}_h$ is smooth outside $V$ and $\sigma_* \sigma^* \omega=\omega,$ we have 
\begin{align} \label{eq_xuongXV}
\int_{\widehat K} \widehat T \wedge \widehat{\omega}^{q}&= \int_{K \backslash V} T \wedge \sigma_*(\widehat{\omega}^q)= \int_{K \backslash V} T \wedge (\ddc \varphi+ \eta+ \omega)^q \\
\nonumber
&\le \int_{U \backslash V} T \wedge (\ddc \varphi+ c \omega)^q,
\end{align}
for some  positive constant $c$ independent of $T$ with $\ddc\varphi+ c \omega \ge 0.$ 

For a positive constant $M,$ put $\varphi_M:= \max\{\varphi, -M\}.$ Note that $\ddc \varphi_M +c \omega \ge 0.$  Since  the positive current $T \wedge (\ddc \varphi_M+ c \omega)^q$ converges to $T \wedge (\ddc \varphi+ c \omega)^q$ on $X \backslash V$ as $M \to \infty,$ using (\ref{eq_xuongXV}), we get
\begin{align} \label{eq_xuong2}
\|\widehat T\|_{\widehat K} \lesssim  \liminf_{M \to \infty} \int_{U \backslash V} T \wedge (\ddc \varphi_M+ c \omega)^q \le \liminf_{M \to \infty} A_M,
\end{align}
where $$A_{M}:= \int_{U} T \wedge \big(\ddc \varphi_M+c \omega\big)^q.$$   Using Assumption $(ii),$  we observe that  $A_M$ can be written as  a linear combination of
$$A_{M,j}:=\int_{U} T \wedge (\ddc \varphi_M)^j \wedge \omega_j \quad (0 \le j \le q)$$
with coefficients of absolute values bounded a constant independent of $T,$ where $\omega_j$ is a  smooth $(q-j,q-j)$-form  depending only on $\omega$  such that   $\ddc \omega_j=0$ on $V.$  So we only need to bound $A_{M,j}.$ We will prove that 
\begin{align}\label{eq_AphayMj}
|A_{M,j}| \lesssim \|T\|_U
\end{align}
for $0 \le j \le q$  by induction on $j.$   When $j=0,$   (\ref{eq_AphayMj}) is also clear.   Assume that (\ref{eq_AphayMj}) holds for every $1, \ldots, j-1.$ This in particular implies 
\begin{align} \label{ine_iniduchyp}
\|T \wedge \big(\ddc \varphi_M+c \omega\big)^{j-1}\| \lesssim \|T\|_U.
\end{align}
Since $T \wedge (\ddc \varphi_M)^{j-1}$ can be written as a linear combination of $T \wedge \big(\ddc \varphi_M+c \omega\big)^{s}$ for $0 \le s \le l-1,$ we deduce from (\ref{ine_iniduchyp}) that 
\begin{align} \label{ine_iniduchyp2}
\|T \wedge (\ddc \varphi_M)^{j-1}\| \lesssim \|T\|_U.
\end{align}
By (\ref{ine_singvarphi}), for $M$ big enough we have  $\supp T \cap \supp \varphi_M \subset W_T \cap W$ which is compact in $X.$  Thus using Stokes' theorem, one obtains
\begin{align*}
A_{M,j} &= \int_U \varphi_M\ddc (T \wedge \omega^j) \wedge (\ddc \varphi_M)^{j-1} \\
&= \int_U  T \wedge (\ddc \varphi_M)^{j-1} \wedge \varphi_M \ddc \omega_j \le  \|  \varphi_M \ddc \omega_j  \|_{\cali{C}^0}  \| T \wedge (\ddc \varphi_M)^{j-1}\|_U\\
& \lesssim \|  \varphi_M \ddc \omega_j  \|_{\cali{C}^0} \|T\|_U
\end{align*}
 because of (\ref{ine_iniduchyp2}).  By (\ref{ine_singvarphi}) and the fact that $\ddc\omega_j=0$ on $V$,  the $\cali{C}^0$-norm of  the form $\varphi_M \ddc \omega_j$ is  bounded independently of $M$.  Thus we get (\ref{eq_AphayMj}) for $j.$ It follows that  $A_M \lesssim \|T\|$ for $M$ big enough. This combined with (\ref{eq_xuong2}) gives $\|\widehat T\|_{\widehat K} \lesssim \|T\|_U.$

Now it remains to show that $(A_\lambda)_* (\widehat \tau_j)_* \widehat T$ is of uniformly mass  on compact subsets of $\pi_{\widehat E}^{-1}(\widehat U_j)$ for some suitable holomorphic admissible maps $\widehat \tau_j: \widehat U_j \to \widehat E$ with $\widehat V \subset \cup_j \widehat U_j.$ Notice that $T_\infty$ is supported in $\pi^{-1}(\supp T \cap V) \subset \widehat K.$

Let  $(\widehat U_j)_j$ be  a family of local charts biholomorphic to $\D^k$ covering $\widehat V \cap W_T$ and $\widehat x=(\widehat x_1, \ldots, \widehat x_k)$ local coordinates on $\widehat U_j$ such that $\widehat V\cap \widehat U_j$ is given by $\widehat x_1=0.$ Identify $\widehat E$ with $\widehat V \times \C.$ Let $\widehat \tau_j: \widehat U_j \to \widehat V \times \C$ be the identity map. 

Let $\rho:= d \widehat x_1$ or $d\overline{\widehat x}_1.$ Since $T_\infty$ is the pullback of a current on $H_\infty$ via $\pi_\infty,$ in order to bound the mass of $T_\infty,$ it suffices to bound  the mass of $|\lambda|(\widehat T \wedge \rho)$ on $Z_\lambda:= \{|\widehat x_1| \le |\lambda|\}$ and   and  the mass of  $|\lambda|^2(\widehat T \wedge \ddc |\widehat x_1|^2)$   on $Z_\lambda.$   By the Cauchy-Schwarz inequality,  
$$\|\widehat T \wedge \rho\|_{Z_\lambda} \lesssim  \|\widehat T\|^{1/2}_{Z_\lambda} \,\|\widehat T \wedge \ddc |\widehat x_1|^2 \|^{1/2}_{Z_\lambda}.$$
Thus, it is enough to estimate $\|\widehat T \wedge \ddc |\widehat x_1|^2\|_{Z_\lambda}.$ This is already done in \cite{Dinh_Sibony_density}. We reproduce arguments here for the readers' convenience. 

Put $\widehat W:= \sigma^{-1}(W).$  Let $u$ be a quasi-p.s.h. function on $\widehat X$ such that $u$ vanishes outside $\widehat W$ and $u$ is a potential of  $[\widehat V]$ on  a small enough open neighborhood  $\widehat W_1 \Subset \widehat W$ of $\widehat V$ \emph{i.e,} $[\widehat V]= \ddc u+ \beta,$ for some smooth form $\beta$ on $\widehat W.$ Note that $\widehat W \cap \supp T$ is relatively compact in $\widehat X.$  We have 
$$\big|u(\widehat x) - \log|\widehat x_1| \big| \le A$$
on $\widehat U_j$ for every $j$ and some constant $A$ independent of $j.$  

Put $s:= \log |\lambda|.$    We only need to consider $|\lambda|$ big, say, $|\lambda|\ge e^{3A}.$ Thus $u \le -\log |\lambda| + 2 A$ on $Z_\lambda.$  Let $\chi_\lambda$ be  a convex increasing function bounded from below on $\R$ such that  $\chi_\lambda(t)=t$ for  $t \ge -\log |\lambda|+3A$ and $0 \le \chi' \le 1$ and 
$$\chi_\lambda''(t)= e^{2t+2\log |\lambda|-5A}$$
 for $t \le -\log |\lambda| +2A.$ Put $\phi_\lambda:= \chi_\lambda \circ u$ which is bounded and  supported on $ \widehat W.$ 
Hence we get
\begin{align} \label{incluu}
\supp \phi_\lambda \cap \supp \widehat T \Subset \widehat X.
\end{align}
We also have 
\begin{align} \label{ine_philambdasingu}
|\phi_\lambda(\widehat x)| \lesssim \big|\log|\widehat x_1| \big|.
\end{align}
Direct computations (see \cite[Le. 2.11]{Dinh_Sibony_density}) give
$$\ddc \|\widehat x_1\|^2 \le c |\lambda|^{-2}(\ddc \phi_\lambda+ \widehat\omega)$$
on $Z_\lambda$ for some constant $c$ independent of $\lambda.$ This  implies that
\begin{align*} 
\|\widehat T \wedge \ddc |\widehat x_1|^2 \|_{Z_\lambda}= \int_{Z_\lambda} T \wedge \ddc |\widehat x_1|^2 \wedge \widehat \omega^{q-1} \lesssim |\lambda|^{-2} \int_{\widehat W_T \cap \widehat W} \widehat T \wedge (\ddc \phi_\lambda+ \widehat \omega) \wedge \widehat \omega^{q-1}. 
\end{align*}
Observe that $\ddc (\widehat \omega^{q-1})=0$ on $\widehat V.$  This together   with (\ref{ine_philambdasingu}) and  (\ref{incluu})  allows us to argue as before to obtain that 
$$ \|\widehat T \wedge \ddc |\widehat x_1|^2 \|_{Z_\lambda} \lesssim  |\lambda|^{-2} \|\widehat T\|_{\widehat W_T \cap \widehat W} \lesssim   |\lambda|^{-2} \|T\|.$$
The proof is finished.
\endproof

We now study the continuity of tangent currents as $T$ converges to another current. Such a property is crucial in applications.   When $X$ is K\"ahler, we give a sufficient condition ensuring the \emph{continuity} of the total tangent classes, see Proposition  \ref{pro_continui-totaltangenclass} below.  On the other hand, for a general non-K\"ahler compact manifold $X$, the de Rham cohomology class of  a positive closed current can be vanished. 
So  the use cohomology classes of closed positive currents in a general complex manifolds  is not as  efficient as usual.  Proposition \ref{pro_semicontinuity2} below   serves as a substitute for the semi-continuity theorem in  \cite[Th. 4.11]{Dinh_Sibony_density}.  
 
\begin{lemma} \label{le_limV} Let $Y$ be a complex manifold and $Z$ a complex hypersurface of $Y$. Let $\xi$ be a smooth closed $(1,1)$-form on $Y$ such that $[Z]= \ddc u+ \xi$ for some   quasi-p.s.h. function $u$ which is smooth outside $Z$ and has a log singularity near $Z.$   Let $R_n$ be a sequence of closed positive currents of bidimension $(q,q)$ on $Y$ converging to a current $R_\infty.$ Assume that 

$(i)$ there exists a compact $K$ of $Y$ for which  $\supp R_n \cap Z \subset K$ for every $n$,

$(ii)$  $R_n \wedge [Z]$ and $(\mathbf{1}_{Y \backslash Z} R_\infty) \wedge [Z]$ are classically well-defined for every $n,$ where $\mathbf{1}_{Y \backslash Z}$ is the characteristic function of $Y \backslash Z.$ \\
Then for every smooth   $2(q-1)$-form $\Omega$  on $Y$ with  $\ddc \Omega=0$ on $Z,$ we have
\begin{align} \label{limit_RnloaiZ}
\lim_{n\to \infty} \langle R_n \wedge [Z], \Omega \rangle = \big\langle (\mathbf{1}_{Y \backslash Z} R_\infty) \wedge [Z], \Omega \big\rangle+ \langle \mathbf{1}_Z R_\infty, \xi \wedge \Omega\rangle,
\end{align}
where we extended the closed positive current $\mathbf{1}_{Y \backslash Z} R_\infty$ trivially through $Z$ to obtain a current on $Y$ and $\mathbf{1}_Z R_\infty$ is viewed as a current on $Z.$ 
\end{lemma}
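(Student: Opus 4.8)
The plan is to decompose $R_n = \mathbf{1}_{Y\backslash Z} R_n + \mathbf{1}_Z R_n$ and, since $R_n \wedge [Z]$ is classically well-defined, reduce the computation of $\langle R_n \wedge [Z], \Omega\rangle$ to the part of $R_n$ charging $Z$ plus the part not charging $Z$, then pass to the limit separately. The key observation driving the argument is that $\mathbf{1}_Z R_n \wedge [Z] = \mathbf{1}_Z R_n \wedge \xi$ classically (the potential $u$ restricted to the support of a current living on $Z$ contributes nothing to $\ddc u$ in the Bedford-Taylor product when tested against a form with $\ddc\Omega = 0$ on $Z$), so the interesting term is the $\mathbf{1}_Z$ piece whose limit we want to identify with $\langle \mathbf{1}_Z R_\infty, \xi\wedge\Omega\rangle$.

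First I would fix a quasi-p.s.h. function $u$ with $[Z]=\ddc u+\xi$, smooth outside $Z$ with log singularity, and (using hypothesis $(i)$) a cutoff so that everything happens inside a fixed relatively compact neighborhood of $K$; this lets me use Stokes freely. Next, writing $\langle R_n\wedge[Z],\Omega\rangle = \langle R_n, \xi\wedge\Omega\rangle + \langle R_n, \ddc u \wedge \Omega\rangle$ in the classical (Bedford-Taylor) sense, I would integrate by parts to move $\ddc$ onto $\Omega$: $\langle R_n, \ddc u\wedge\Omega\rangle = \langle R_n, u\, \ddc\Omega\rangle + (\text{boundary terms that vanish by the cutoff})$, but $\ddc\Omega = 0$ on $Z$, and one must be careful because $\ddc\Omega$ need not vanish off $Z$ while $u$ is only log-singular — so instead I would truncate $u$ by $u_M := \max\{u,-M\}$, use $R_n\wedge \ddc u_M \to R_n\wedge\ddc u$ off $Z$, and handle the mass on $Z$ directly. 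On $Z$ the current $\ddc u_M$ has no mass in the limit against $R_n$ once restricted appropriately, so $\langle \mathbf{1}_Z R_n\wedge[Z],\Omega\rangle = \langle \mathbf{1}_Z R_n, \xi\wedge\Omega\rangle$; meanwhile $\langle \mathbf{1}_{Y\backslash Z}R_n \wedge [Z],\Omega\rangle$ is a classical self-intersection that is upper/lower semicontinuous and, by a standard monotone-truncation argument combined with hypothesis $(ii)$ giving classical well-definedness of the limiting product, converges to $\langle(\mathbf{1}_{Y\backslash Z}R_\infty)\wedge[Z],\Omega\rangle$.

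Then I would treat the limit of the two pieces. For the $\mathbf{1}_Z$ piece: since $\mathbf{1}_Z R_n$ and $\mathbf{1}_Z R_\infty$ are closed positive currents supported on the complex manifold $Z$, they are pushforwards of currents on $Z$, and $\xi\wedge\Omega$ restricted to $Z$ is a smooth form there; but $R_n \to R_\infty$ does not imply $\mathbf{1}_Z R_n \to \mathbf{1}_Z R_\infty$ in general, so here I would instead argue that $\langle R_n,\xi\wedge\Omega\rangle \to \langle R_\infty,\xi\wedge\Omega\rangle$ directly (weak convergence against a smooth form, using the cutoff and the mass bound from $(i)$), and separately that $\langle \mathbf{1}_{Y\backslash Z}R_n,\xi\wedge\Omega\rangle$ converges to $\langle \mathbf{1}_{Y\backslash Z}R_\infty,\xi\wedge\Omega\rangle$ — the latter again via monotone truncation and the fact that the trivial extension of $\mathbf{1}_{Y\backslash Z}R_\infty$ is the limit of $\mathbf{1}_{Y\backslash Z}R_n$ when $(ii)$ holds. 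Subtracting gives the $\langle \mathbf{1}_Z R_\infty,\xi\wedge\Omega\rangle$ term, and combining with the convergence of the $\mathbf{1}_{Y\backslash Z}$ self-intersection finishes the identity.

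The main obstacle I anticipate is justifying that the "diagonal" cross term — the mass that $\ddc u$ (equivalently $[Z]$) puts on the part of $R_n$ charging $Z$ — contributes only through $\xi$ and not through a residual wedge with $\ddc u$; this is exactly where $\ddc\Omega=0$ on $Z$ is used, and it requires a careful truncation argument showing $\langle \mathbf{1}_Z R_n, \ddc u_M \wedge \Omega\rangle \to 0$ as $M\to\infty$ uniformly in $n$, which in turn rests on hypothesis $(i)$ (uniform support control) together with the local finiteness of $R_n\wedge\ddc u_M$ and a Chern-Levine-Nirenberg-type mass estimate. The secondary subtlety is that weak convergence $R_n\to R_\infty$ does not behave well under multiplication by $\mathbf{1}_Z$ or $\mathbf{1}_{Y\backslash Z}$, so every appearance of such a decomposition in a limit must be routed through monotone truncations of the potential $u$ rather than taken naively.
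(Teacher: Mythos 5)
There is a genuine gap, and it sits at the load-bearing step of your argument. Under hypothesis $(ii)$ the product $R_n \wedge [Z]$ is classically well-defined, i.e.\ the potential $u$ (which is $-\infty$ on $Z$) is locally integrable for the trace measure of $R_n$; this forces $\mathbf{1}_Z R_n = 0$ for every $n$. Your decomposition $R_n = \mathbf{1}_{Y\backslash Z}R_n + \mathbf{1}_Z R_n$ is therefore vacuous at finite $n$, and the two convergence claims you build on are each false in general: the claim that $\langle \mathbf{1}_{Y\backslash Z}R_n \wedge [Z], \Omega\rangle$ converges to $\langle (\mathbf{1}_{Y\backslash Z}R_\infty)\wedge[Z],\Omega\rangle$ is literally the assertion that the limit in (\ref{limit_RnloaiZ}) carries no correction term, and the claim that $\langle \mathbf{1}_{Y\backslash Z}R_n,\xi\wedge\Omega\rangle \to \langle \mathbf{1}_{Y\backslash Z}R_\infty,\xi\wedge\Omega\rangle$ contradicts the weak convergence $\langle R_n,\xi\wedge\Omega\rangle \to \langle R_\infty,\xi\wedge\Omega\rangle$ that you invoke in the same sentence, since the left-hand sides coincide while the right-hand sides differ by exactly $\langle \mathbf{1}_Z R_\infty,\xi\wedge\Omega\rangle$. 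Hypothesis $(ii)$ does not imply that $\mathbf{1}_{Y\backslash Z}R_n$ converges to $\mathbf{1}_{Y\backslash Z}R_\infty$; mass of $R_n$ can concentrate onto $Z$ in the limit, and quantifying that defect is the entire content of the lemma. Monotone truncation $u_M=\max\{u,-M\}$ controls $R_n\wedge \ddc u_M \to R_n\wedge\ddc u$ for \emph{fixed} $n$, not the continuity of $R\mapsto R\wedge[Z]$ in $R$. Your final "subtraction" then reduces to $0=0$ at finite $n$, so the term $\langle \mathbf{1}_Z R_\infty,\xi\wedge\Omega\rangle$ is never actually produced by the argument; it is smuggled in through the false convergence claim.

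The correct mechanism locates the correction term elsewhere. Using $(i)$, take a cutoff $\chi$ with $\chi\equiv 1$ on a neighborhood of $Z$ meeting $\supp R_n$ in a fixed compact set, and apply Stokes to get
\begin{align*}
\langle R_n\wedge[Z],\Omega\rangle = \langle R_n\wedge\xi,\chi\Omega\rangle + \int_Y R_n\wedge(u\chi\,\ddc\Omega) + \int_Y u\,R_n\wedge\big(\ddc\chi\wedge\Omega - \dc\chi\wedge d\Omega + d\chi\wedge\dc\Omega\big).
\end{align*}
The second and third test forms are continuous on $Y$ and vanish on $Z$: for the second because $\ddc\Omega=0$ on $Z$ kills the logarithmic singularity of $u$, and for the third because the derivatives of $\chi$ are supported away from $Z$ where $u$ is smooth. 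Hence their weak limits see only $\mathbf{1}_{Y\backslash Z}R_\infty$, and running the same Stokes identity for $\mathbf{1}_{Y\backslash Z}R_\infty$ (legitimate by $(ii)$) reassembles them, together with $\langle \mathbf{1}_{Y\backslash Z}R_\infty\wedge\xi,\chi\Omega\rangle$, into $\langle(\mathbf{1}_{Y\backslash Z}R_\infty)\wedge[Z],\Omega\rangle$. The first term converges to $\langle R_\infty\wedge\xi,\chi\Omega\rangle = \langle \mathbf{1}_{Y\backslash Z}R_\infty\wedge\xi,\chi\Omega\rangle + \langle \mathbf{1}_Z R_\infty,\xi\wedge\Omega\rangle$, and this $\xi$-term --- not a $\mathbf{1}_Z$-piece of $R_n$ --- is the sole source of the extra summand in (\ref{limit_RnloaiZ}).
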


\proof By $(i)$  there is a cut-off function $\chi$ compactly supported on $Y$ such that $0 \le \chi \le 1$ and $\chi \equiv 1$ on an open neighborhood $W$ of $Z$ with $W \cap \supp R_n \Subset K'$ for every $n$ and some fixed compact $K'$ independent of $n.$   Since the support of $R_n \wedge [Z]$ is contained in $W,$  we get 
\begin{align*}
\langle R_n \wedge [Z], \Omega \rangle &=\langle R_n \wedge [Z], \chi \Omega \rangle=\langle R_n \wedge (\ddc u+ \xi), \chi \Omega \rangle\\
&= \langle R_n \wedge  \xi, \chi \Omega \rangle+\int_Y R_n \wedge (u \chi \ddc \Omega)+ \\
&+ \int_Y u \, R_n \wedge \big(\ddc\chi\wedge  \Omega- \dc \chi \wedge d \Omega+ d \chi \wedge \dc \Omega\big).
\end{align*}
Denote by $I_1, I_2, I_3$ respectively the first, second and third terms in the right-hand side of the last formula. Clearly,
$$ I_1 \to \langle R_\infty \wedge  \xi, \chi \Omega \rangle=\langle (\mathbf{1}_{Y \backslash Z}R_\infty) \wedge  \xi, \chi \Omega \rangle+\langle \mathbf{1}_Z R_\infty \wedge  \xi, \Omega \rangle$$
and
$$I_2 \to  \int_Y u \, (\mathbf{1}_{Y \backslash Z} R_\infty) \wedge \big(\ddc\chi\wedge  \Omega- \dc \chi \wedge d \Omega+ d \chi \wedge \dc \Omega\big)$$
because $u$ is smooth outside $Z$ and $\d \chi$, $\dc \chi$ vanish near $Z.$ On the other hand, since $\ddc \Omega=0$ on $Z,$ the form $u \chi \ddc \Omega$ is continuous on $Y$ and equal to $0$ on $Z.$  This implies that 
$$I_2 \to \int_Y R_\infty \wedge (u \chi \ddc \Omega)= \int_{Y} (\mathbf{1}_{Y \backslash Z} R_\infty) \wedge (u\chi \ddc \Omega).$$
The desired limit (\ref{limit_RnloaiZ}) then follows.  
The proof is finished.
\endproof

\begin{definition}(\cite{Dinh_Sibony_density})  For a positive current $R$ in $\P(E),$ the \emph{h-dimension} of $R$ is the biggest integer $s$ for which $R\wedge \pi^* \omega^s_V \not = 0,$ where $\omega_V$ is a Hermitian metric on $V.$ 
\end{definition}

Observe that by a bidegree reason, if $(p,p)$ denotes the bidegree of $R,$ then the h-dimension of $R$ is at least $(l-p),$ where $l$ is the dimension of $V.$    The following is a version of the semi-continuity theorem  \cite[Th. 4.11]{Dinh_Sibony_density}.

\begin{proposition} \label{pro_semicontinuity2}  Let $X$ be a  complex manifold.  Let $(T_n)_{n\in \N}$ be a sequence of closed positive currents of bidimension $(q,q)$ converging to  a current $T.$ Assume that

$(i)$   $T_n$ has no mass on $V$ and $\supp T_n \cap V \subset K$ for some compact $K\subset U$ independent of $n,$

$(ii)$  the products $\widehat T_n \wedge [\widehat V],$ $\widehat T_\infty\wedge [\widehat V]$ are well-defined in the classical sense for every $n,$

$(iii)$ $\ddc \omega^j=0$ on $V$ for $1 \le j \le q-1.$\\
Then for 
$$\widehat T_{n,j}:=\widehat T_n \wedge [\widehat V] \wedge \widehat \omega^{q-j-1}, \quad  \widehat T_{\infty,j}:=\widehat T_\infty \wedge [\widehat V] \wedge \widehat \omega^{q-j-1}$$
then the following three properties hold:

$(i)$ \begin{align} \label{conver_TV}
\lim_{n \to \infty} \sigma_* \widehat T_{n,j} =0  
\end{align}
for $j> j_\infty$, where $j_\infty$ is the h-dimension of $\widehat T_\infty \wedge [\widehat V]$ (recall $\widehat  V \approx \P(E)$),

$(ii)$ \begin{align} \label{conver_TVsualai}
\limsup_{n \to \infty} \langle \sigma_*\widehat T_{n,j_\infty}, \omega^{j_\infty} \rangle \le  \langle \sigma_*\widehat T_{\infty,j_\infty}, \omega^{j_\infty} \rangle.
\end{align}

$(iii)$ if moreover $q+l\ge k+j_\infty,$ we have 
\begin{align} \label{conver_TVsualaicomplementary}
\lim_{n \to \infty} \langle \sigma_*\widehat T_{n,j}, \omega^{j} \rangle =  \langle \sigma_*\widehat T_{\infty,j},\omega^{j} \rangle
\end{align}
for every $0 \le j \le q-1.$
\end{proposition}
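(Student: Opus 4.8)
\textbf{Proof plan for Proposition \ref{pro_semicontinuity2}.}

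The plan is to reduce everything on $\widehat X$ to an application of Lemma \ref{le_limV}, using the hypersurface $Z=\widehat V$, the forms $\Omega=\widehat\omega^{q-j-1}\wedge(\text{smooth closed pieces built from }\widehat\omega)$, and the convergence $\widehat T_n\to\widehat T_\infty$. First I would justify that $\widehat T_n$ converges to $\widehat T_\infty$ (after the trivial extensions through $\widehat V$): by Theorem \ref{th_Hhold}, or rather by its method of proof, the masses $\|\widehat T_n\|_{\widehat K}$ are uniformly controlled by $\|T_n\|_U$, which is bounded since $T_n\to T$; together with $\sigma_*\widehat T_n=T_n\to T$ and the fact that $\widehat T_n$ has no mass on $\widehat V$ (so its extension is $[(\sigma|_{\widehat X\setminus\widehat V})^{-1}]^* T$), one gets a unique limit $\widehat T_\infty$. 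This uses hypothesis $(i)$ in an essential way to keep supports inside a fixed compact set so that no mass escapes along $\widehat V$.

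Next I would record the key consequence of hypothesis $(iii)$: since $\ddc\omega^j=0$ on $V$ for $1\le j\le q-1$, pulling back by $\sigma$ and using that $\sigma^*$ is an isomorphism away from $\widehat V$ while $\widehat\omega=\sigma^*\omega+\widehat\omega_h$ with $\widehat\omega_h$ a Chern form of $\mathcal O(-\widehat V)$, one checks that $\ddc(\widehat\omega^{s})$, suitably interpreted, restricts to $0$ on $\widehat V$ for the relevant range of $s$ — more precisely, each $\widehat\omega^{q-j-1}$ can be expanded as a linear combination of $\sigma^*\omega_m\wedge\widehat\omega_h^{\,m}$ with $\ddc\omega_m=0$ on $V$, and the forms $\widehat\omega_h^{\,m}$ being closed, the product is $\ddc$-closed on $\widehat V$. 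This is exactly the hypothesis needed so that $\Omega:=\widehat\omega^{q-j-1}$ (wedged with a further $\omega^{j}$-factor pulled up, in parts $(ii)$–$(iii)$) is an admissible test form for Lemma \ref{le_limV}. Applying that lemma with $R_n=\widehat T_n\wedge\widehat\omega^{j}$ (a closed positive current of bidimension $(q-j,q-j)$, well-defined since $\widehat\omega$ is smooth) gives
\begin{align*}
\lim_{n\to\infty}\langle\sigma_*\widehat T_{n,j},\omega^{j}\rangle=\big\langle(\mathbf 1_{\widehat X\setminus\widehat V}\widehat T_\infty)\wedge[\widehat V]\wedge\widehat\omega^{q-1},\,\cdot\,\big\rangle+\langle\mathbf 1_{\widehat V}\widehat T_\infty\wedge\xi\wedge\cdots\rangle,
\end{align*}
and since $\widehat T_\infty$ has no mass on $\widehat V$ the second term vanishes, yielding the clean limit $\langle\sigma_*\widehat T_{\infty,j},\omega^{j}\rangle$; this is precisely $(iii)$, and in the range $q+l\ge k+j_\infty$ the h-dimension count forces all the intermediate products to be the genuine ones.

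For part $(i)$, the point is that $\sigma_*\widehat T_{n,j}$ is a positive current and, by the mass estimates above, $\|\widehat T_{n,j}\|$ is bounded; any weak limit $\mu$ is supported on $\sigma(\supp\widehat T_\infty\cap\widehat V)$ and is dominated (in the sense of the semi-continuity argument of \cite[Th.~4.11]{Dinh_Sibony_density}, run on $\widehat X$) by $\sigma_*(\widehat T_\infty\wedge[\widehat V]\wedge\widehat\omega^{q-j-1})$; but $\widehat T_\infty\wedge[\widehat V]$ has h-dimension $j_\infty$, so wedging with $\pi^*$(Kähler form)$^{\,q-j-1}$ kills it once $q-j-1>j_\infty$, i.e. $j>j_\infty$ gives the limit $0$ — here I would mimic the truncation with $\varphi_M=\max\{\varphi,-M\}$ and Stokes exactly as in the proof of Theorem \ref{th_Hhold} to get the one-sided bound, then push forward. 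Part $(ii)$ is the borderline case $j=j_\infty$: the same truncation-and-Stokes argument gives an upper semi-continuity $\limsup_n\langle\sigma_*\widehat T_{n,j_\infty},\omega^{j_\infty}\rangle\le\langle\sigma_*\widehat T_{\infty,j_\infty},\omega^{j_\infty}\rangle$ rather than equality, because the limit may lose mass that concentrates along $\widehat V$ but not enough positivity is available to recover it at the top h-dimension.

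\textbf{Main obstacle.} The delicate step is controlling the error terms in Lemma \ref{le_limV} and in the $\varphi_M$-truncation uniformly in $n$: one must know that the potentials of $[\widehat V]$ are integrable against the trace measures of $\widehat T_n$ with bounds independent of $n$, which is where hypotheses $(i)$ (fixed compact support, no escape of mass) and $(ii)$ (classical well-definedness of $\widehat T_n\wedge[\widehat V]$) combine with the mass estimate $\|\widehat T_n\|_{\widehat K}\lesssim\|T_n\|_U$ from Theorem \ref{th_Hhold}; carrying this through for the singular form $\varphi_M\ddc\omega_j$ and checking the h-dimension bookkeeping that produces the exact cutoff $j>j_\infty$ in $(i)$ and the direction of the inequality in $(ii)$ is the real work.
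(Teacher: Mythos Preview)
There is a genuine gap at the very first step. You claim that $\widehat T_n\to\widehat T_\infty$ on $\widehat X$, arguing from the mass bound of Theorem \ref{th_Hhold} together with $\sigma_*\widehat T_n=T_n\to T$. But $\sigma_*$ is not injective on currents supported on $\widehat V$: the fibres of $\sigma|_{\widehat V}$ are positive-dimensional, so knowing $\sigma_*$ of a limit does not determine the limit. In the paper's proof one only gets, after passing to a subsequence, $\widehat T_n\to\widehat T'=\widehat T_\infty+\widehat S$ with $\widehat S:=\mathbf 1_{\widehat V}\widehat T'$ a closed positive current on $\widehat V$ that is \emph{a priori} nonzero. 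The entire content of the proposition is the analysis of this defect current $\widehat S$. Your sentence ``since $\widehat T_\infty$ has no mass on $\widehat V$ the second term vanishes'' is precisely where the argument collapses: in Lemma \ref{le_limV} the term $\langle\mathbf 1_Z R_\infty,\xi\wedge\Omega\rangle$ refers to the mass of the \emph{limit} $R_\infty$ on $Z$, not of the individual $R_n$, and here $R_\infty=\widehat T'$, not $\widehat T_\infty$.

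Once $\widehat S$ is kept, Lemma \ref{le_limV} (with $\xi=-\widehat\omega_h$ and $\Omega=\widehat\omega^{q-j-1}\wedge\sigma^*\omega^j$) gives
\[
\lim_n\langle\sigma_*\widehat T_{n,j},\omega^j\rangle
=\langle\sigma_*\widehat T_{\infty,j},\omega^j\rangle
-\int_{\widehat V}\widehat S\wedge\widehat\omega_h\wedge\widehat\omega^{q-j-1}\wedge\sigma^*\omega^j.
\]
Positivity of the left side and the definition of $j_\infty$ force $j_{\widehat S}\le j_\infty$; this is what yields $(i)$ and, since the $\widehat S$-term is nonnegative for $j\ge j_{\widehat S}$, the inequality $(ii)$. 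For $(iii)$ one must actually prove $\widehat S=0$: the hypothesis $q+l\ge k+j_\infty$ is used in a local dimension count on $\widehat V\approx\P(E)$ showing that a closed positive $(q,q)$-bidimensional current on $\widehat V$ with h-dimension $\le j_\infty$ must vanish. Your proposal does not mention $\widehat S$ at all and so misses this mechanism; your phrase ``the h-dimension count forces all the intermediate products to be the genuine ones'' is not a substitute. (Separately, your h-dimension bookkeeping in part $(i)$ is off: the relevant vanishing comes from $\sigma^*\omega^j$ with $j>j_\infty$, not from $\widehat\omega^{q-j-1}$, and the inequality $q-j-1>j_\infty$ is not equivalent to $j>j_\infty$.)
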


Note that if $q\ge 3,$ Condition $(iii)$ is equivalent to that $\ddc \omega= \partial \omega \wedge \overline\partial \omega=0$   on $V.$  If $T_n$ are currents of integration along analytic sets, then the assumption of Proposition \ref{pro_semicontinuity2} on $T_n$ is automatically satisfied. This is the case in our application to the problem of estimating the number of isolated periodic points of meromorphic self-maps later. 

\proof By extracting a subsequence, we can assume that $\widehat T_n \to \widehat T'$. We have $\mathbf{1}_{\widehat X \backslash \widehat V}\widehat T'= \widehat T.$ Put $\widehat S:= \mathbf{1}_{\widehat V} \widehat T'.$   Denote by $j_n$ the h-dimension of $\widehat T_n \wedge [V]$ and $j_{\widehat S}$ the h-dimension of $\widehat S.$ By extracting a subsequence, without loss of generality, we can assume that $j_n$ are all equal and denote by $j^*$ this number.

Notice that $[\widehat V]= \ddc u - \widehat \omega_h$ for some quasi-p.s.h. $u$ by the choice of $\widehat \omega_h.$   
By $(iii),$   $\ddc \widehat \omega^j=0$ on $\widehat V$ for $1 \le j \le q-1.$ Hence if $\theta:=  \omega^{j}$ or $\theta$ is a closed smooth form of $X,$ then $\Omega:= \widehat \omega^{q-j-1} \wedge \sigma^* \theta$ is $\ddc$-closed.  This allows us to apply Lemma \ref{le_limV} to $Y:=\widehat X,$ $Z:= \widehat V, \xi:= - \widehat \omega_h,  R_n:=\widehat T_n$ and $\Omega.$ We then obtain that 
\begin{multline}\label{eqlimit-TnTV}
\lim_{n\to \infty}\int_{\widehat X} \widehat T_n \wedge [\widehat V] \wedge \widehat \omega^{q-j-1} \wedge \sigma^* \omega^{j}= \int_{\widehat X }\widehat T_\infty \wedge [\widehat V] \wedge \widehat \omega^{q-j-1} \wedge \sigma^* \omega^{j}+\\
- \int_{\widehat V} \widehat S \wedge  \widehat \omega_h \wedge  \widehat \omega^{q-j-1} \wedge \sigma^* \omega^{j}.
\end{multline}
Denote by $I_{n}(j), I_\infty(j), I_{\widehat S}(j)$ respectively the first, second and third integrals in the above limit (from left to right).  We have $I_n, I_\infty\ge 0$ and  $\lim_{n\to \infty} I_n= I_\infty - I_{\widehat S}.$
It follows that if $j_{\widehat S} >j_\infty,$ then  we get $I_n(j_{\widehat S}) \ge 0,  I_\infty(j_{\widehat S})=0$ and $I_{\widehat S}(j_{\widehat S})>0.$ This is a contradiction. Thus 
\begin{align} \label{ine-comparehdim}
j_{\widehat S} \le   j_\infty.
\end{align}
Thus  (\ref{conver_TV})  follows. The second desired limit (\ref{conver_TVsualai}) is deduced from (\ref{eqlimit-TnTV}) and the fact that
$$\int_{\widehat V} \widehat S \wedge  \widehat \omega_h \wedge  \widehat \omega^{q-j-1} \wedge \sigma^* \omega^{j}=\int_{\widehat V} \widehat S \wedge  \widehat \omega^{q-j} \wedge \sigma^* \omega^{j}\ge 0 $$
if $j\ge j_{\widehat S}.$ 

Consider now the case where $q+l \ge k+j_\infty.$  We will prove that $\widehat S=0.$   We already know from (\ref{ine-comparehdim})  that its h-dimension $j_{\widehat S}$ is at most $j_\infty.$ Thus 
\begin{align}\label{eqsuytubdthdimS}
\widehat S \wedge \sigma^*\omega^j=0
\end{align}
 for every $j>j_\infty.$  Let $\widehat W_s\approx  W_s \times \D^{k-l-1}$ be a local chart of $\widehat V$  which trivialize the projection $\sigma|_{\widehat V},$ where $W_s$ is a local chart of $V$ and $\D$ is the unit disk in $\C$.  Let $\omega_0$ be the standard K\"ahler form on $\C^{l-1}.$  Define 
 $$\omega':=\omega+\omega_0$$
which is a Hermtian metric on $\widehat W$ satisfying  
$$\omega'^{k-l-1+j}=  \omega^j \wedge \beta_j,$$
 for some smooth form $\beta_j$ because $\omega_0^{k-q}=0.$   Using $q\ge k-l+j_\infty$ and the fact that the bidimension of $\widehat S$ is $(q,q),$ we obtain that the mass of $\widehat S$ on compact subset of $\widehat W$ is bounded by a constant times  
$$\langle \widehat S,  \omega'^q \rangle=\langle \widehat S,  \omega'^{k-l-1+j_\infty+1} \wedge \omega'^{q-k+l-j_\infty} \rangle=\langle \widehat S\wedge  \sigma^*\omega^{j_\infty+1}, \beta_j \wedge \omega'^{q-k+l-j_\infty} \rangle=0$$
by (\ref{eqsuytubdthdimS}).  Thus $\widehat S=0$ on $\widehat W.$ It follows that $\widehat S=0$. Combining this with (\ref{eqlimit-TnTV})  gives (\ref{conver_TVsualaicomplementary}).  The proof is finished.
\endproof

We in fact obtain the following continuity property for total tangent classes in the K\"ahler case. 

\begin{proposition}\label{pro_continui-totaltangenclass} Let $X$ be a  complex K\"ahler  manifold.  Let $(T_n)_{n\in \N}$ be a sequence of closed positive currents of bidimension $(q,q)$ converging to  a current $T.$ Assume that

$(i)$   $T_n$ has no mass on $V$ and $\supp T_n \cap V \subset K$ for some compact $K\subset U$ independent of $n,$

$(ii)$  $q+l \ge k+j_\infty$, where $j_\infty$ is the h-dimension of the total tangent class $\kappa^V(T_\infty)$ of $T_\infty$ along $V$.

Then if $\kappa^V(T_n)$  denotes the total tangent class of $T_n$ along $V,$  then  we have  
\begin{align} \label{conver_TV2}
\lim_{n \to \infty} \kappa^V(T_n)  =   \kappa^V(T_\infty).
\end{align}
\end{proposition}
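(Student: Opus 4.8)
The plan is to deduce the cohomological convergence on $\overline E$ from the mass convergences of Proposition \ref{pro_semicontinuity2} (and, underneath it, of Lemma \ref{le_limV}). Since $X$ is K\"ahler, $V$ and the projective bundle $\overline E=\P(E\oplus\C)$ are compact K\"ahler; in particular $H^{2(k-q)}(\overline E,\R)$ is finite dimensional and, by Poincar\'e duality, a class there is determined by its cup products with a basis of $H^{2q}(\overline E,\R)$. By the projective bundle structure of $\pi:\overline E\to V$, such a basis may be chosen of the form $\{\pi^*\gamma\wedge\widehat h^{\,b}\}$, where $\gamma$ runs over a basis of $H^\bullet(V,\R)$, $\widehat h$ is the class of a fixed Chern form of $\mathcal{O}_E(1)$ positive along the fibres, and $0\le b\le k-l$. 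Hence it suffices to prove
\begin{align}\label{eq:reduc-pairings}
\langle\kappa^V(T_n),\,\pi^*\gamma\wedge\widehat h^{\,b}\rangle\longrightarrow\langle\kappa^V(T_\infty),\,\pi^*\gamma\wedge\widehat h^{\,b}\rangle
\end{align}
for all such $\gamma,b$.

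The core step is to transport the pairings in \eqref{eq:reduc-pairings} to the blow-up $\widehat X$ and to recognise them among the quantities controlled in Proposition \ref{pro_semicontinuity2}. Realising $\kappa^V(T_n)$ as the class of the tangent current $(\sigma_E)_*\widehat T_{n,\infty}$ with $\widehat T_{n,\infty}=\widehat\pi_\infty^*\widehat S_{n,\infty}$ as in \eqref{eq_TifntysigmaE}--\eqref{eq_TifntysigmaE22}, and identifying (after a routine reduction to the case where $\widehat T_n\wedge[\widehat V]$ is classically defined, via Proposition \ref{pro_uniquetangentcurent}; this is in particular the situation in the intended applications, cf.\ the remark after Proposition \ref{pro_semicontinuity2}) the current $\widehat S_{n,\infty}$ with $\widehat T_n\wedge[\widehat V]$ on $\widehat V\approx\P(E)$, one expands the pairing and pulls it back through $\sigma_E$. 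The power $\widehat h^{\,b}$ contributes a wedge with $\widehat\omega^{\,q-j-1}$ and one further factor $[\widehat V]$, while $\pi^*\gamma$ contributes a smooth factor on $\widehat X$ that is $\ddc$-closed on $\widehat V$ (all the relevant $\ddc$-closedness holds automatically here, since in the K\"ahler case $\omega$, hence $\widehat\omega=\sigma^*\omega+\widehat\omega_h$, is closed). Thus each pairing in \eqref{eq:reduc-pairings} becomes a fixed, $n$-independent linear combination of expressions $\langle\widehat T_n\wedge[\widehat V]\wedge\widehat\omega^{\,q-j-1},\Omega\rangle$ with $\Omega$ smooth and $\ddc$-closed on $\widehat X$, that is, of the numbers $\langle\sigma_*\widehat T_{n,j},\omega^{\,j}\rangle$ and their closed-form variants appearing in (the proof of) Proposition \ref{pro_semicontinuity2}; the same holds with $T_\infty$ in place of $T_n$.

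It then remains to invoke Proposition \ref{pro_semicontinuity2}. Its part $(i)$ shows that every contribution with $j>j_\infty$ tends to $0$, and these are also absent from $\kappa^V(T_\infty)$ since the h-dimension of $\widehat T_\infty\wedge[\widehat V]$ is $j_\infty$; its part $(ii)$ handles the top surviving index $j=j_\infty$; and its part $(iii)$ — which applies precisely because hypothesis $(ii)$ of the present proposition is the inequality $q+l\ge k+j_\infty$ — gives the convergence of the remaining terms with $0\le j\le q-1$. Summing over the finitely many terms, every pairing in \eqref{eq:reduc-pairings} converges to the stated limit, whence $\kappa^V(T_n)\to\kappa^V(T_\infty)$ in $H^{2(k-q)}(\overline E,\R)$.

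I expect the main difficulty to be the bookkeeping of the core step: one must carry the cup products on $\overline E$ faithfully through the blow-up $\overline{\widehat E}\to\overline E$ and the fibration $\widehat\pi_\infty$, keep track of which h-dimension index $j$ each basis element $\pi^*\gamma\wedge\widehat h^{\,b}$ feeds, and verify that the resulting masses lie entirely within the range $0\le j\le q$ covered by parts $(i)$--$(iii)$ of Proposition \ref{pro_semicontinuity2}. A secondary, more technical, point is the reduction to classically defined products $\widehat T_n\wedge[\widehat V]$; one can alternatively avoid Proposition \ref{pro_uniquetangentcurent} altogether and run the h-dimension comparison directly from Lemma \ref{le_limV} applied to the $\ddc$-closed test forms $\widehat\omega^{\,q-j-1}\wedge(\text{pullback of a closed representative of }\gamma)$, exactly as in the proof of Proposition \ref{pro_semicontinuity2}.
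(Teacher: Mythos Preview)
Your overall strategy is sound, and the alternative you flag at the end (running the h-dimension comparison directly from Lemma~\ref{le_limV} with general closed test forms) is essentially what the paper does. But your primary route contains a genuine gap: the pairings $\langle\kappa^V(T_n),\pi^*\gamma\wedge\widehat h^{\,b}\rangle$ with \emph{arbitrary} $\gamma\in H^*(V)$ do \emph{not} reduce to the specific quantities $\langle\sigma_*\widehat T_{n,j},\omega^j\rangle$ appearing in the statement of Proposition~\ref{pro_semicontinuity2}. Those involve only powers of the fixed K\"ahler form, whereas determining a cohomology class on $\P(E)$ requires testing against all of $H^*(V)$, not just the subring generated by $[\omega]$. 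So invoking parts $(i)$--$(iii)$ of Proposition~\ref{pro_semicontinuity2} as a black box is not enough; you need their proofs with $\omega^j$ replaced by arbitrary closed forms on $V$ (and then on $\widehat V$). You gesture at this with the phrase ``closed-form variants,'' but it is the heart of the matter, not a variant.

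The paper's proof is considerably more direct and sidesteps the classical-product issue you worry about. It works purely at the level of cohomology classes from the outset: using the identity $\alpha|_{\widehat V}=\widehat\pi_*(\alpha\wedge\{\widehat V\})$ and the fact that $\{\widehat V\}=-\{\widehat\omega_h\}$, one has $\kappa^V(T_n)=\{\widehat T_n\}|_{\widehat V}$, and weak convergence $\widehat T_n\to\widehat T_\infty+\widehat S$ gives, in cohomology,
\[
\kappa^V(T_n)\wedge(\widehat\omega|_{\widehat V})^{q-j-1}\longrightarrow\kappa^V(T_\infty)\wedge(\widehat\omega|_{\widehat V})^{q-j-1}-\{\widehat S\}\wedge\widehat\omega_h\wedge(\widehat\omega|_{\widehat V})^{q-j-1}
\]
for all $j$. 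This is a cohomological analogue of (\ref{eqlimit-TnTV}) and requires no hypothesis on classical products. The dimension argument from the proof of Proposition~\ref{pro_semicontinuity2} then forces $\widehat S=0$ under $(ii)$; once $\widehat S=0$, the same identity with $\widehat\omega$ replaced by the pullback of an arbitrary closed form on $\widehat V$ yields the full convergence $\kappa^V(T_n)\to\kappa^V(T_\infty)$. Your detour through $\overline E$, Proposition~\ref{pro_uniquetangentcurent}, and the statement of Proposition~\ref{pro_semicontinuity2} is unnecessary.
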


Note that since $j_\infty$ is at least $(l-p)$ where $(p,p)$ is the bidegree of $T_\infty,$  the above condition $(ii)$ is equivalent to $j_\infty= q+l-k.$  If $T_\infty$ is the current of integration along an analytic set, then the equality $j_\infty= q+l-k$ means $T_\infty$ intersects $V$ properly by Theorem \ref{th_V1alongV} and Lemma \ref{le_tontaiWnga}  in the next section. 
 
\proof   Recall that the total tangent class of $T_\infty$ along $V$ is a cohomology class  in $H_\infty \approx \P(E)$  and is given by $\pi_\infty^*(\{\widehat T_\infty\}|_{\widehat V})$ where we identified $\widehat V$ with $\P(E)$  and $\pi_\infty: \overline E\backslash V \to H_\infty$  is the natural projection.  We still have a notion of h-dimension for such a class, see \cite[Def. 3.7]{Dinh_Sibony_density} and the notions of h-dimensions of positive closed currents on $\P(E)$ and their cohomology classes are the same.    

Consider a tubular neighborhood $\widehat W$ of $\widehat V,$ that means $\widehat W$ is diffeomorphism to an open neighborhood of  $\widehat V$ in the normal bundle of $\widehat V.$ Using the projection of the normal bundle of $\widehat V,$ we get a projection $\widehat \pi: \widehat W \to \widehat V.$ Observe that $W:=\sigma(\widehat W)$ is an open neighborhood of $V$ in $X$ and $\widehat \pi$ induces a projection $\pi: W \to V.$ Since out problem is of local nature near $V,$ from now on we restrict to working on $W, \widehat W.$  As above we can assume $\lim_{n \to\infty} \widehat T_n= \widehat T_\infty+ \widehat S,$ where $\widehat S$ is a current on $\widehat V.$

Recall that for every cohomology class $\alpha$ in $W,$ we have 
$$\alpha|_V= \pi_*(\alpha \wedge \{V\}).$$
We can see it by using Poincar\'e's duality on $V$ and  de Rham's regularisation theorem to get a closed form representing $\{V\}$ with support closed to $V$.   Using the last formula and 
 (\ref{eqlimit-TnTV}) with $\omega^j$ replaced  by $\pi^* \theta$ for  an arbitrary smooth closed form $\theta$ in $V,$ one gets
\begin{align}\label{lim-kappaTnSmu}
\lim_{n\to \infty} \kappa^V(T_n) \wedge  (\widehat \omega|_{\widehat V})^{q-j-1} = \kappa^V(T_\infty) \wedge  (\widehat \omega|_{\widehat V})^{q-j-1}- \{\widehat S\}\wedge \widehat \omega_h \wedge  (\widehat \omega|_{\widehat V})^{q-j-1}
\end{align}
for every $0 \le j \le q-1.$  Now using $q+l \ge k+ j_\infty$ and arguing exactly as in the proof of Proposition \ref{pro_semicontinuity2} give $\widehat S=0.$ Combining this with the fact that (\ref{lim-kappaTnSmu}) still holds if we replace $\widehat \omega$ by $\widehat \pi^* \widehat \theta,$ for  any closed smooth form $\widehat \theta$ in $\widehat V,$ we obtain the desired limit. This finishes the proof.
\endproof

 \section{Intersection of analytic sets} \label{sec_analy} 

In this section, we study the intersection of analytic sets by using the  theory of tangent currents developed in the last section.  Our  first main result in this section  is  Theorem \ref{th_V1alongV} below saying that  the tangent current of an analytic subset  along a smooth submaniold on \emph{every complex manifold} always exists and is unique.   Let $X$ be a complex manifold. We emphasize that there is neither compactness assumption nor K\"ahler condition on $X.$ Let $V, E, \sigma, \widehat X, \widehat V, \pi_\infty$ be as in the last section. 
  Let $V_1 \not \subset V$ be an analytic subset of $X$ and $\widehat V_1$ the strict transform of $V_1$ in $\widehat X.$

 
 \begin{theorem} \label{th_V1alongV}  The tangent current of $[V_1]$ along $V$ is unique and is given by the pull-back of $[\widehat V_1] \wedge [\widehat V]$  by $\pi_\infty$. As a consequence, for  analytic subsets $V_1, V_2$ of $X,$ the density current associated to $[V_1], [V_2]$ is unique. 
\end{theorem}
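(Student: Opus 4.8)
The plan is to reduce the existence and uniqueness of the tangent current of $[V_1]$ along $V$ to the situation already handled by Proposition \ref{pro_uniquetangentcurent}. The key observation is that the strict transform $\widehat V_1$ of $V_1$ in the blowup $\widehat X$ is an analytic subset of $\widehat X$ which does \emph{not} contain the exceptional hypersurface $\widehat V$ (since $V_1\not\subset V$), and the pull-back current $\widehat T$ of $[V_1]$ by $\sigma$ off $\widehat V$ is exactly $[\widehat V_1]$ extended trivially across $\widehat V$; this extension is legitimate because $[\widehat V_1]$ has locally finite mass everywhere. So first I would record that $\widehat T=[\widehat V_1]$ as a closed positive current on $\widehat X$, in particular it has locally finite mass near $\widehat V$. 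Next I would verify that $[\widehat V_1]\wedge[\widehat V]$ is well-defined in the classical sense: since $\widehat V_1\not\supset\widehat V$, a local potential $\widehat u$ of $[\widehat V]$ restricts to a function that is not identically $-\infty$ on any component of $\widehat V_1$, hence is locally integrable with respect to the trace measure of $[\widehat V_1]$; concretely $[\widehat V_1]\wedge[\widehat V]$ is the integration current along the proper intersection part of $\widehat V_1\cap\widehat V$ (with multiplicities), an honest closed positive current on $\widehat V_1$ of the expected dimension. This is exactly the hypothesis of Proposition \ref{pro_uniquetangentcurent}.

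With those two points in hand, Proposition \ref{pro_uniquetangentcurent} applies verbatim with $T=[V_1]$: the tangent current of $[V_1]$ along $V$ exists, is unique, and equals $\pi_\infty^*\big([\widehat V_1]\wedge[\widehat V]\big)$, where $[\widehat V_1]\wedge[\widehat V]$ is viewed as a current on $\widehat V\approx\P(E)\approx\widehat H_\infty$. That proves the first assertion. For the ``as a consequence'' clause, I would invoke Lemma \ref{le_deltatangecurr}: a density current associated to $[V_1],[V_2]$ is, by definition, a tangent current of $[V_1]\otimes[V_2]$ along the diagonal $\Delta\subset X\times X$. Now $[V_1]\otimes[V_2]=[V_1\times V_2]$ is the integration current along the analytic subset $V_1\times V_2$ of $X\times X$, and $V_1\times V_2\not\subset\Delta$ unless $V_1=V_2$ is a point — and even in degenerate cases $[V_1\times V_2]$ has no mass on $\Delta$ after the usual Federer decomposition (or one simply notes $V_1\times V_2$ meets $\Delta$ in the diagonal of $V_1\cap V_2$, a proper analytic subset of $V_1\times V_2$ unless $V_1\times V_2\subseteq\Delta$, which forces $V_1=V_2$ to be a single point and then the density current is trivially $0$). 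Applying the first part of the theorem to the analytic subset $V_1\times V_2$ of the manifold $X\times X$ and the submanifold $\Delta$ gives uniqueness of the tangent current, hence uniqueness of the density current.

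The main technical point — really the only place where something must be checked rather than quoted — is the claim that $[\widehat V_1]\wedge[\widehat V]$ is classically well-defined, i.e. local potentials of $[\widehat V]$ are integrable against the trace measure of $[\widehat V_1]$. Since $\widehat V$ is a smooth hypersurface and $\widehat V_1\not\supset\widehat V$, this is the standard fact that a plurisubharmonic function with logarithmic poles along a hypersurface $D$ is locally integrable along any analytic set not contained in $D$ (one can see it by slicing $\widehat V_1$ with generic complex lines transverse to $\widehat V$, or by the Siu-type restriction/Lelong number estimates); I would state it with a one-line justification and a reference to Demailly's book. A secondary nuisance is the degenerate case $V_1\subset V$ which is excluded by hypothesis, and the degenerate case $V_1=V_2$ in the consequence, which I would dispatch by the remark above (the density current is then just $\pi_{\Delta,\infty}^*$ of the self-intersection of $[V_1]$, again covered by the first part applied on $X\times X$, or is $0$ in the trivial case). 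Everything else is a direct citation of Proposition \ref{pro_uniquetangentcurent} and Lemma \ref{le_deltatangecurr}.
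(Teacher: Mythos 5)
Your proposal is correct and follows essentially the same route as the paper: observe that $\widehat V_1\not\subset\widehat V$, hence $\widehat V_1$ meets the hypersurface $\widehat V$ properly, so $[\widehat V_1]\wedge[\widehat V]$ is classically well-defined, and then quote Proposition \ref{pro_uniquetangentcurent}; the consequence for density currents is the same statement applied to $V_1\times V_2$ and $\Delta$ in $X\times X$. The extra details you supply (that $\widehat T=[\widehat V_1]$ has locally finite mass near $\widehat V$, the integrability of logarithmic potentials along an analytic set not contained in the polar hypersurface, and the degenerate case $V_1\times V_2\subset\Delta$) are correct and only make explicit what the paper leaves implicit.
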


\proof Clearly, $\widehat V_1$ is not a subset of $\widehat V.$ Thus, $\widehat V_1$ intersects $\widehat V$ properly because $\widehat V$ is a hypersurface. We deduce that the  wedge product $[\widehat V_1] \wedge [\widehat V]$ is well-defined in the classical sense, see \cite{Fornaess_Sibony,Demailly_ag}. The desired assertion then follows immediately from Proposition \ref{pro_uniquetangentcurent}. The proof is finished. 
\endproof

Put $l_1:= \dim V_1.$ Denote by $\mathcal{W}$ the set of irreducible components of  $\widehat V_1 \cap \widehat V$. These components are of dimension $(l_1-1).$  Write 
$$[\widehat V_1] \wedge [\widehat V]= \sum_{\widehat W \in \mathcal{W}} \alpha_{\widehat W} \widehat W,$$
for some nonnegative numbers $\alpha_{\widehat W}.$ Recall that  given hypersurfaces $D_1, \ldots, D_m$ and an analytic subset $D$, if  $D_1, \ldots,D_m, D$ intersects properly, then the intersection $D_1 \wedge \cdots \wedge D_m \wedge D$ in the sense of the pluripotential theory is the same as that defined in the classical sense of the theory of the intersection of analytic sets. The reason is that the both definitions enjoy the same continuity property, see \cite[p. 212]{chirka}. Thus  $\alpha_{\widehat W}$ is equal to the usual multiplicity along $\widehat W$ of the proper intersection $\widehat V_1 \cap \widehat V.$ In particular, $\alpha_{\widehat W}$ is a strictly positive  integer for every  $\widehat W \in \mathcal{W}.$

We now study the relation between irreducible components $\widehat{W}$ and their images by $\sigma.$ This will be crucial for our applications.  A typical situation we should keep in mind is the case where $V_1,V$ are of complementary dimensions and \emph{they do not intersect properly.} However, in what follows, we only assume the complementary dimension condition when necessary. 

Denote by $\mathcal{W}_1$ the set of all irreducible components of $V_1 \cap V.$ We first begin with the following lemma. 

\begin{lemma} \label{le_tontaiWnga} Let $W_1\in \mathcal{W}_1.$ Then  there exists  $\widehat W_1 \in \mathcal{W}$ such that 
 $W_1= \sigma(\widehat W_1).$ 
\end{lemma}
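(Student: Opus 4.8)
The plan is to work locally near a point of $W_1$ and track what happens under the blow-up $\sigma$. First I would pick a generic smooth point $x_0 \in W_1$ which is also a smooth point of $V$, lying outside all the other components of $V_1 \cap V$ and outside $\Sing V_1$; since $W_1$ is an irreducible component of $V_1 \cap V$ of dimension $\dim W_1 = l_1 + l - k'$ is not forced, but in any case $W_1 \subset V_1 \cap V$, such a point exists because $\mathcal{W}_1$ is finite and each stratum we are excluding is a proper analytic subset of $W_1$. Near $x_0$ choose holomorphic coordinates in which $V = \{x'' = 0\}$ with $x = (x', x'')$; then $\sigma: \widehat X \to X$ is the usual blow-up along $\{x''=0\}$, so over the chart the exceptional divisor $\widehat V$ fibers over $V$ with $\mathbb{P}^{k - l - 1}$-fibers.

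The key step is to examine the strict transform $\widehat V_1$ over the point $x_0$. Since $V_1$ contains $x_0$ but $V_1 \not\subset V$, the strict transform $\widehat V_1$ is the closure in $\widehat X$ of $\sigma^{-1}(V_1 \setminus V)$. The fiber $\sigma^{-1}(x_0) \cong \mathbb{P}^{k-l-1}$ meets $\widehat V_1$ in a nonempty set: indeed, take a sequence of points $y_n \in V_1 \setminus V$ with $y_n \to x_0$; passing to a subsequence, the directions of $y_n''$ in $\mathbb{P}(\C^{k-l})$ converge to some limit direction $[v] \in \mathbb{P}^{k-l-1}$, and the corresponding lifts $\widehat y_n$ converge to the point of $\sigma^{-1}(x_0)$ determined by $[v]$, which therefore lies in $\widehat V_1 \cap \sigma^{-1}(x_0)$. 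Hence $\widehat V_1 \cap \widehat V$ is nonempty over a neighborhood of $x_0$ in $W_1$, and since $\sigma$ is proper and restricts to an isomorphism $\widehat X \setminus \widehat V \to X \setminus V$, the image $\sigma(\widehat V_1 \cap \widehat V)$ is an analytic subset of $V_1 \cap V$ that contains a dense (in fact open dense) subset of $W_1$; being analytic and closed it contains all of $W_1$. Therefore at least one irreducible component $\widehat W_1$ of $\widehat V_1 \cap \widehat V$ has $\sigma(\widehat W_1) \supset$ a Zariski-dense subset of $W_1$, and since $\sigma(\widehat W_1)$ is an irreducible analytic subset of $V_1 \cap V$ contained in $\overline{V_1 \cap V}$, while $W_1$ is an irreducible component, we get $\sigma(\widehat W_1) = W_1$ provided $\dim \widehat W_1$ surjects onto $W_1$ (which holds because the generic fiber of $\sigma|_{\widehat W_1}$ over $W_1$ is a single point away from the over-loci we excluded — recall $\widehat V_1 \to V_1$ is birational).

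The main obstacle I anticipate is the bookkeeping of irreducibility and dimensions: one must ensure that the component $\widehat W_1$ produced actually surjects onto $W_1$ rather than onto a proper subvariety, and that $\sigma(\widehat W_1)$ cannot be a larger component of $V_1 \cap V$ containing $W_1$ (impossible, since $W_1$ is by hypothesis an irreducible \emph{component}). The cleanest way around this is the dimension count: $\widehat V_1$ is the strict transform, so $\sigma|_{\widehat V_1}: \widehat V_1 \to V_1$ is proper birational, whence $\dim \widehat V_1 = l_1$ and every component of $\widehat V_1 \cap \widehat V$ has dimension $\geq l_1 - 1$; meanwhile every component of $V_1 \cap V$ that is properly contained in $V_1$ has dimension $\leq l_1 - 1$ as well when $V$ is a hypersurface-type constraint — but here $V$ has arbitrary codimension, so this monotonicity needs the blow-up: on $\widehat X$ the divisor $\widehat V$ is a genuine hypersurface, $\widehat V_1 \not\subset \widehat V$, so $\widehat V_1 \cap \widehat V$ is a hypersurface in $\widehat V_1$, hence pure of dimension $l_1 - 1$. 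Its image under $\sigma$ has dimension $\leq l_1 - 1$, and it surjects (as shown) onto $W_1$; since $W_1$ is an irreducible component of $V_1 \cap V$ there is nothing larger to hit, so some component $\widehat W_1$ maps onto $W_1$. This completes the argument.
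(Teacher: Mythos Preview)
Your argument is correct, but it is considerably more laborious than the paper's. The paper dispenses with local coordinates, sequence arguments, and dimension counts altogether: it observes in one line that $\sigma(\widehat V_1 \cap \widehat V) = V_1 \cap V$ (since $\sigma(\widehat V_1)=V_1$ and $\sigma$ is a biholomorphism off $\widehat V$), and that each $\sigma(\widehat W)$ is an irreducible analytic subset of $V_1\cap V$ (proper image of an irreducible analytic set). Then $W_1$ is covered by the locally finite family $\{\sigma(\widehat W)\}_{\widehat W\in\mathcal{W}}$, and irreducibility of $W_1$ forces $W_1=\sigma(\widehat W_1)$ for some $\widehat W_1$. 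Your local sequence argument re-proves the properness statement by hand, and your worry that $\sigma(\widehat W_1)$ might strictly contain $W_1$ is resolved immediately---as you yourself note---by the fact that $W_1$ is an irreducible \emph{component}: any irreducible analytic subset of $V_1\cap V$ containing $W_1$ must lie in a component, hence equals $W_1$. The dimension discussion in your last paragraph (pure dimension $l_1-1$ of $\widehat V_1\cap\widehat V$, birationality of $\sigma|_{\widehat V_1}$) is true but unnecessary for the lemma as stated.
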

 
\proof For every $\widehat W \in \mathcal{W},$  $\sigma(\widehat W)$ is an irreducible analytic subset of $V_1\cap V$ because otherwise if $A$ is the singular part of $\sigma(\widehat W)$ then $\sigma(\widehat W)\backslash A$ has at least two connected components, hence,  so does 
$$\widehat W \backslash \sigma^{-1}(A)= (\sigma|_{\widehat W})^{-1}( \sigma(\widehat W)\backslash A);$$
 this is a contradiction by the irreducibility of $\widehat W.$   

 Since $\sigma(\widehat V_1)=V_1$ and $\sigma$ is bijective outside $\widehat V,$ we have   $\sigma(\widehat V_1 \cap \widehat V)= V_1 \cap V.$  This combined with the irreducibility of $\sigma(\widehat W)$ implies  that there exists a finite number of elements of $\mathcal{W}$ the union of  whose images by $\sigma$ is $W_1.$ Since these images are analytic subsets of $W_1,$ we deduce that at least one of them is  $W_1.$ The existence of $\widehat W_1$ then follows.  
\endproof

Notice that the above result doesn't rule out the possibility that there exists $\widehat W \in \mathcal{W}$ such that $\sigma(\widehat W)$ is a proper analytic subset of an irreducible component of $V_1 \cap V.$ This is the reason for the following definition. \emph{An exotic (intersection) component} $W$ of $V_1\cap V$ is the image $\sigma(\widehat W)$ of some $\widehat W \in \mathcal{W}$ such that $\sigma(\widehat W)\not \in\mathcal{W}_1.$ Denote by $\tilde{\mathcal{W}}_1$ the union of $\mathcal{W}_1$ with the set of all exotic components of $V_1 \cap V.$ For $W\in\tilde{\mathcal{W}}_1,$ we define \emph{its multiplicity} as 
$$\nu_W:=  \sum_{\widehat W: \,\sigma(\widehat W)=W} \alpha_{\widehat W}.$$
In this paper, we will pay special attention to components of dimension zero. An \emph{exotic (intersection) point $x$} of $V_1\cap V$ is an exotic 0-dimensional component of $V_1 \cap V.$ 

Let $P$ be  \emph{the number of isolated intersection points} counted with multiplicity of $V_1 \cap V$ and $\tilde{P}$ \emph{the sum of $P$ with the number of exotic intersection points} counted with multiplicity in $V_1 \cap V.$ One of our goals is  to estimate $\tilde{P}.$  We will need the following auxiliary result.

\begin{lemma} \label{le-submer-estimat} Let $Y,Z$ be two K\"ahler manifolds and $g: Y \to Z$ a proper holomorphic submersion. Let $K$ be a compact in $Z.$ Let $Q$ be an irreducible analytic subset of $Y.$ Then  generic fibers of $g|_Q$ have the same volume denoted by $\beta_Q$ and  there exists a constant $M$ independent of $Q$ such that we have 
\begin{align}\label{ine-estiQgQ}
M^{-1} \beta_Q \vol\big(g(Q)\cap K\big)\le \vol(Q\cap g^{-1}(K)) \le M \beta_Q \vol\big(g(Q)\cap K\big),
\end{align} 
here the volume is computed by fixing K\"ahler metrics on $Y,Z.$
\end{lemma}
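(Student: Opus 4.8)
The plan is to reduce the statement to a local computation over the base $Z$, exploiting that $g$ is a proper submersion between K\"ahler manifolds, so that locally over $Z$ the map $g$ looks like a trivial fibration. First I would cover the compact $K$ by finitely many coordinate charts $\Omega_i\subset Z$ over which $g$ is biholomorphic to a product, i.e. $g^{-1}(\Omega_i)\cong \Omega_i\times F_i$ with $g$ the first projection and $F_i$ a fixed fiber; since $g$ is proper this is possible with finitely many charts by Ehresmann-type trivialization (holomorphically on small enough polydiscs). Using a partition of unity argument, it suffices to prove the two-sided bound with $K$ replaced by a compact subset of one such chart $\Omega$, with the constant $M$ depending only on the finitely many charts and the comparison of the chosen K\"ahler metrics with the product metrics $\omega_Z\oplus\omega_{F}$ on each $g^{-1}(\Omega)$. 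Because all the relevant metrics are mutually comparable on the compact pieces, volumes computed with the ambient metrics and with the product metrics differ by uniformly bounded multiplicative constants, independent of $Q$.

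Next, over a product chart $\Omega\times F$, I would invoke the coarea / Fubini formula for the analytic set $Q$ (more precisely, for its smooth locus, the singular locus having zero volume and not affecting the integrals). Writing $\pi_Z,\pi_F$ for the two projections, the $2\dim Q$-dimensional volume of $Q\cap g^{-1}(K)$ with respect to the product metric is, up to the bounded constants above, comparable to $\int_{Q\cap g^{-1}(K)} (\pi_Z^*\omega_Z+\pi_F^*\omega_F)^{\dim Q}$. Expanding the power by the binomial theorem and noting that $\pi_Z^*\omega_Z$ has rank at most $\dim Z$, the only surviving terms on $Q$ pair $\pi_Z^*\omega_Z^{\,\dim g(Q)}$ with $\pi_F^*\omega_F^{\,\dim Q-\dim g(Q)}$; pushing forward by $g|_Q$ and using that generic fibers of $g|_Q$ are analytic subsets of $\{z\}\times F$ all homologous in $F$ hence of the same volume $\beta_Q$ (here K\"ahlerness of $F$ is used: volume of a compact analytic cycle is a cohomological quantity), Fubini gives that this integral equals a bounded constant times $\beta_Q\int_{g(Q)\cap\Omega\cap K}\omega_Z^{\,\dim g(Q)}=\const\cdot\beta_Q\vol(g(Q)\cap K)$. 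Summing over the charts and tracking that all multiplicative constants are uniform in $Q$ yields \eqref{ine-estiQgQ}.

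The main obstacle, I expect, is the honest justification that the generic fibers of $g|_Q$ have a common volume $\beta_Q$ and that this number controls the fiberwise integrals uniformly. Two points need care here: (1) for an irreducible $Q$, the generic fiber of $g|_Q$ over $g(Q)$ is pure-dimensional of dimension $\dim Q-\dim g(Q)$ and its fundamental class in a fiber $g^{-1}(z)\cong F$ is locally constant in $z$ over the smooth locus of $g(Q)$, so its volume with respect to $\omega_F$ is constant by Wirtinger's theorem plus the fact that $[\text{fiber}]$ varies in a fixed cohomology class of the K\"ahler manifold $F$; (2) one must ensure the exceptional set (non-generic fibers, where the dimension could jump, plus $\Sing Q$ and the branch locus of $g|_Q$) has measure zero in $Q$ and so contributes nothing, which follows from standard properties of analytic sets and proper holomorphic maps. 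Once these are in place, the coarea formula applied to $g|_{\Reg Q}$, combined with the metric comparisons on the finitely many compact chart pieces, delivers both inequalities with a single constant $M$ depending only on $Y,Z,g,K$ and the fixed metrics, not on $Q$.
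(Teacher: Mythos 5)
Your overall strategy coincides with the paper's: first show that the generic fibres of $g|_Q$ have a common volume by a homotopy/cohomology-invariance argument, then localize over finitely many trivializing charts, compare the ambient metric with a product metric, and conclude by Fubini. The constant-fibre-volume part and the left-hand inequality of \eqref{ine-estiQgQ} are sound: for the lower bound one only needs that the single term $\pi_Z^*\omega_Z^{\dim g(Q)}\wedge\pi_F^*\omega_F^{\dim Q-\dim g(Q)}$ is a positive form dominated by a fixed multiple of $(\pi_Z^*\omega_Z+\pi_F^*\omega_F)^{\dim Q}$, and fibre integration identifies its integral over $Q$ with $\beta_Q\,\vol(g(Q)\cap K)$ up to uniform constants. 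One inaccuracy worth flagging: a proper holomorphic submersion is in general only $\cali{C}^\infty$-locally trivial (holomorphic local triviality would force all fibres to be biholomorphic, by Fischer--Grauert), so the ``homologous in $F$'' step should be run either with the smooth trivialization and the locally constant cohomology class of $\omega_Y|_{g^{-1}(z)}$, or, as the paper does, by observing that the generic fibres of $g|_Q$ are $\cali{C}^\infty$-homotopic as cycles in $Y$ and integrating the closed form $\omega_Y^{\dim Q-\dim g(Q)}$ defined on all of $Y$.

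The genuine gap is in the right-hand inequality. Your claim that in the binomial expansion of $(\pi_Z^*\omega_Z+\pi_F^*\omega_F)^{\dim Q}$ restricted to $Q$ ``the only surviving terms'' are $\pi_Z^*\omega_Z^{\dim g(Q)}\wedge\pi_F^*\omega_F^{\dim Q-\dim g(Q)}$ is false: the rank argument only kills the terms $\pi_Z^*\omega_Z^{j}\wedge\pi_F^*\omega_F^{\dim Q-j}$ with $j>\dim g(Q)$, while those with $j<\dim g(Q)$ survive and measure the degree of $Q$ in the fibre directions, a quantity not controlled by $\beta_Q$ and $\vol(g(Q)\cap K)$. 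Concretely, take $Y=\P^1\times\P^1$ with the product Fubini--Study metric, $Z=\P^1$, $g=\pi_1$, $K=Z$, and $Q=\Gamma_h$ the graph of a rational map $h$ of degree $a$. Then $g|_Q$ is a biholomorphism onto $Z$, so $\beta_Q=1$ and $\beta_Q\vol(g(Q)\cap K)=\vol(\P^1)$, whereas $\vol(Q)=(1+a)\vol(\P^1)$ since $\int_{\Gamma_h}\pi_2^*\omega_{FS}=a\,\vol(\P^1)$; the inequality $\vol(Q)\le M\beta_Q\vol(g(Q)\cap K)$ therefore fails for $a$ large, for any $M$ independent of $Q$. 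This is not a defect of your write-up alone: the paper's own proof makes the same unjustified Fubini step ($\vol(Q_j)\lesssim\beta_Q\vol(Z_j\cap K)$ in a product metric), which ignores the normal Jacobian of $g|_{\Reg Q}$ --- that Jacobian is bounded above (whence the lower bound via the coarea formula) but not below. Any correct version of the upper bound needs an additional hypothesis controlling the fibre-direction degrees $\int_Q\pi_Z^*\omega_Z^{j}\wedge\pi_F^*\omega_F^{\dim Q-j}$ for $j<\dim g(Q)$.
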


\proof  Let $\omega$ be a K\"ahler metric on $Y.$ We obtain the  induced metric on $Q.$ Observe that since $Q$ is irreducible and $g$ is proper, $g(Q)$ is an irreducible analytic subset of $Z.$  By Hironaka's desingularisation theorem, there is  a proper smooth modification $\pi':Y' \to Y$ such that  the strict transform $Q'$ of $Q$ is smooth. Let $g':= g|_Q \circ \pi': Q' \to g(Q).$ Outside a proper analytic subset of $g(Q),$ the map $g'$ is a proper submersion. By applying   Ehresmann's lemma  (see for example \cite[Le. 10.2]{Demailly-theorieHodge}) to $g'$ and using the irreducibility of $g'(Q')$, we  see that generic fibers of $g'$ are (locally) $\cali{C}^\infty$-homotopic to each other. This combined with the fact that a generic fiber of $g|_Q$  is the direct image of a generic fiber of $g'$ implies that generic fibers of $g|_Q$ is (locally) $\cali{C}^\infty$-homotopic as well. Using this, the closedness of $\omega$ and the fact that every analytic set is naturally oriented,  we obtain that generic fibers of $g$ have the same volume denoted by $\beta_Q$ as in the statement. 

Since $g$ is a submersion, $Y$ is a fibered bundle over $Z.$  Consider a finite covering  $(Y_j)_{1 \le j \le N}$ of $Y$ over  a relatively compact open neighborhood of $K$ such that for every $j,$  $Y_j$ is a local (differentiable) trivialization of $Y,$ \emph{i.e,}  $Y_j= Z_j \times Y'_j,$ where $Z_j\subset Z$ and $Y'_j$ is diffeomorphic to a fiber of $g.$  We can also assume that $Y_j$ is a relatively compact open subset of another trivialization of $Y.$ Put $Q_j:= Q \cap Y_j.$ Consider a Riemannian metric on $Y_j$ which is the product of two Riemannian metrics on $Z_j, Y'_j.$ Using this metric and the fact that the volume of a generic fiber of $g|_{Q_j}$ is $\le  \beta_Q,$ we have 
$$\vol(Q_j) \lesssim \beta_Q  \vol(Z_j\cap K) \le \beta_Q \vol\big(g(Q)\cap K\big).$$      
Summing over $1\le j\le N$ in the last inequality gives 
$$\vol(Q\cap g^{-1}(K)) \le M \beta_Q \vol\big(g(Q)\cap K\big)$$
for some constant $M$ independent of $Q$. The other desired inequality is proved in the same way. This finishes the proof.  
\endproof

Let $\omega, \widehat \omega, \widehat \omega_h$ be as in the last section.

\begin{lemma} \label{le_isolatedpoint} Assume that $V_1, V$ are of complementary dimensions.  The following three properties hold:

$(i)$  for any isolated point $x$ in $V_1 \cap V$,   the multiplicity $\nu_x$ defined above is equal to the usual multiplicity of $x$ in the intersection $V_1 \cap V.$ Moreover, the only irreducible component $\widehat W$ of $\widehat V_1 \cap \widehat V$ such that $x \in \sigma(\widehat W)$ is $\sigma^{-1}(x),$

$(ii)$  for every positive $(l_1-1,l_1-1)$-form $\Phi$ on $\widehat X$ whose restriction to each fiber of  $\widehat V \approx \P(E)$ is of mass $1$ on that fiber, we have
\begin{align}\label{ine_MKisolate}
\sum_{x \in V_1 \cap V  \text{ isolated or exotic}} \nu_x \delta_x \le  \sigma_*\big( [\widehat V_1] \wedge [\widehat V] \wedge \Phi\big),
\end{align}

$(iii)$  if $V$ is compact and $\Phi$ is a form as in $(ii)$, then there exists a constant $M$ independent of $V_1$ such that 
\begin{align}\label{ine_MKisolate2new}
\tilde{P} \ge \sigma_*\big( [\widehat V_1] \wedge [\widehat V] \wedge \Phi\big)- M \sum_{j=0}^{l_1-2}\|\sigma_*\big( [\widehat V_1] \wedge [\widehat V] \wedge \widehat \omega^j\big)\|.
\end{align}
\end{lemma}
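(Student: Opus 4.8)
The plan is to analyze the structure of the currents appearing in the three statements by working locally near the fibers $\sigma^{-1}(x)$ and using the properness/finiteness of the intersection $\widehat V_1 \cap \widehat V$. For part $(i)$, the key observation is that $V_1$ and $V$ have complementary dimensions, so $l_1 = k - l$, hence $l_1 - 1 = k - l - 1 = \dim(\text{fiber of }\sigma|_{\widehat V})$. Near an isolated point $x$ of $V_1\cap V$, I would note that $\sigma^{-1}(x)$ is exactly one full fiber of $\widehat V \approx \P(E)$ over $x$, of dimension $l_1-1$, which is therefore a full-dimensional analytic subset of $\widehat V_1\cap\widehat V$ (since $\widehat V_1\cap\widehat V$ has pure dimension $l_1-1$); thus it must be one of the components $\widehat W\in\mathcal W$. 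Any other component $\widehat W'$ with $x\in\sigma(\widehat W')$ would, by Lemma \ref{le_tontaiWnga} and the irreducibility argument there, have $\sigma(\widehat W')$ an irreducible analytic subset of $V_1\cap V$ containing $x$; since $x$ is isolated, $\sigma(\widehat W')=\{x\}$, so $\widehat W'\subset\sigma^{-1}(x)$, forcing $\widehat W' = \sigma^{-1}(x)$ by irreducibility and equality of dimensions. To identify $\nu_x = \alpha_{\sigma^{-1}(x)}$ with the classical multiplicity of $x$ in $V_1\cap V$, I would use that the classical intersection multiplicity is a local analytic invariant insensitive to blowing up along $V$: precisely, $\mult_x(V_1\cdot V) = \mult_{\widehat W}(\widehat V_1\cdot\widehat V)\cdot(\text{generic multiplicity of }\sigma\text{ along }\widehat W)$, and $\sigma$ is generically one-to-one from $\widehat W = \sigma^{-1}(x)$ onto $\{x\}$ in the appropriate sense — more carefully, one invokes the projection formula for proper pushforward of intersection cycles, $\sigma_*([\widehat V_1]\wedge[\widehat V]\wedge\Phi)$ recovering the point $x$ with the right coefficient because $\Phi$ has fiberwise mass $1$.

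For part $(ii)$, I would push forward the identity $[\widehat V_1]\wedge[\widehat V] = \sum_{\widehat W\in\mathcal W}\alpha_{\widehat W}\,\widehat W$ wedged with $\Phi$. Since $\Phi$ is a positive $(l_1-1,l_1-1)$-form, each term $\alpha_{\widehat W}\,[\widehat W]\wedge\Phi$ is a positive measure on $\widehat X$, and its pushforward by $\sigma$ is a positive measure supported on $\sigma(\widehat W)$. For a component $\widehat W$ with $\sigma(\widehat W) = \{x\}$ a point (isolated or exotic), $\widehat W$ is contained in the fiber $\pi^{-1}(x)\subset\widehat V\approx\P(E)$; if moreover $\widehat W$ equals the full fiber then $\int_{\widehat W}\Phi = 1$ by hypothesis, so that term contributes exactly $\alpha_{\widehat W}\delta_x$, and summing over all such $\widehat W$ over $x$ gives the coefficient $\nu_x$. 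If $\widehat W$ is a proper subvariety of the fiber the contribution is still a nonnegative multiple of $\delta_x$ which only helps the inequality; and contributions from components $\widehat W$ with $\dim\sigma(\widehat W)\ge 1$ give positive measures not concentrated at points, hence can be discarded from the lower bound. Dropping all nonnegative terms except those producing the atoms $\nu_x\delta_x$ over isolated and exotic points yields \eqref{ine_MKisolate}.

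For part $(iii)$, which I expect to be the main obstacle, the issue is converting the measure-valued lower bound \eqref{ine_MKisolate} into the numerical lower bound \eqref{ine_MKisolate2new} for $\tilde P$, i.e. controlling the \emph{non-zero-dimensional} part of $\sigma_*([\widehat V_1]\wedge[\widehat V]\wedge\Phi)$. The total mass $\|\sigma_*([\widehat V_1]\wedge[\widehat V]\wedge\Phi)\|$ overcounts $\tilde P$ by the contributions of components $\widehat W$ with $\dim\sigma(\widehat W)=d\ge 1$; the plan is to bound the mass of $\sigma_*([\widehat W]\wedge\Phi)$ for such a component in terms of $\|\sigma_*([\widehat V_1]\wedge[\widehat V]\wedge\widehat\omega^{j})\|$ for $j = l_1-1-d \le l_1-2$. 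Here I would replace $\Phi$ on the fiberwise-$d$-dimensional directions by powers of the Chern form $\widehat\omega_h$ (restricting to each $\P(E)$-fiber as a positive form of fiberwise-mass-controlled type) and absorb the remaining $\widehat\omega^j$ in the base directions, using compactness of $V$ so that all the geometric constants (comparing $\Phi$, $\widehat\omega_h$, $\widehat\omega$ on the relevant tubular neighborhood) are uniform and independent of $V_1$. Concretely one writes, up to a uniform constant $M$, $[\widehat W]\wedge\Phi \le M\,[\widehat W]\wedge\widehat\omega_h^{\,d}\wedge\widehat\omega^{\,l_1-1-d}$ and, because $\widehat W$ is a component of $\widehat V_1\cap\widehat V$ of the correct dimension with positive integer coefficient $\alpha_{\widehat W}\ge 1$, this is dominated by $M\,[\widehat V_1]\wedge[\widehat V]\wedge\widehat\omega^{\,l_1-1-d}$ after pushforward; summing over $d=1,\dots,l_1-1$ (and noting the $d=l_1-1$, i.e. $j=0$, case may need the submersion volume comparison of Lemma \ref{le-submer-estimat} to be uniform in $V_1$) gives the error term $M\sum_{j=0}^{l_1-2}\|\sigma_*([\widehat V_1]\wedge[\widehat V]\wedge\widehat\omega^{j})\|$. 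Subtracting this overcount from the total mass and using part $(ii)$ yields \eqref{ine_MKisolate2new}. The delicate points are: (a) making the comparison constant $M$ genuinely independent of $V_1$, which relies on $\Phi$ and the forms $\widehat\omega,\widehat\omega_h$ being fixed on a fixed compact tubular neighborhood of the compact set $\widehat V$; and (b) correctly matching which power $\widehat\omega^j$ controls a $d$-dimensional image component — essentially a bidegree bookkeeping forced by $\dim\widehat W = l_1-1$ and $\dim\sigma(\widehat W)=d$.
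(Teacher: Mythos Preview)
Your plan is correct and, for parts $(ii)$ and $(iii)$, essentially coincides with the paper's argument: decompose $[\widehat V_1]\wedge[\widehat V]$ according to whether $\dim\sigma(\widehat W)=0$ or $\ge 1$, observe that the zero-dimensional pieces are exactly the full fibers $\sigma^{-1}(x)$ (by the dimension count $l_1-1=k-l-1=\dim\sigma^{-1}(x)$), and then for $(iii)$ control the higher-dimensional contributions via Lemma~\ref{le-submer-estimat}.

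The one place where your route differs from the paper is part $(i)$. You propose identifying $\nu_x$ with the classical intersection multiplicity via the projection formula for cycles under $\sigma$. The paper instead uses the tangent-current machinery already built: near an isolated $x$ one has $T_\infty=\nu'_x\,[\pi^{-1}(x)]$ with $\nu'_x$ the classical multiplicity (this is immediate from the classical definition, cf.\ \cite[Le.~2.2]{DNT_equi}), and Theorem~\ref{th_V1alongV} gives $T_\infty=\pi_\infty^*([\widehat V_1]\wedge[\widehat V])$; comparing on $\pi^{-1}(\text{nbhd of }x)$ forces $\nu'_x=\alpha_{\sigma^{-1}(x)}=\nu_x$. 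This is shorter given the theory in hand, and it avoids having to make precise the ``projection formula'' step you allude to.

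For $(iii)$ your bookkeeping is slightly garbled: the inequality ``$[\widehat W]\wedge\Phi\le M\,[\widehat W]\wedge\widehat\omega_h^{\,d}\wedge\widehat\omega^{\,l_1-1-d}$'' does not hold as written, since $\widehat\omega_h$ is not a positive form. What the paper does is first bound $\langle[\widehat W],\Phi\rangle\lesssim\vol(\widehat W)$ (using $\Phi\lesssim\widehat\omega^{l_1-1}$ on the compact $\widehat V$), then apply Lemma~\ref{le-submer-estimat} to get $\vol(\widehat W)\lesssim\beta_{\widehat W}\,\vol(\sigma(\widehat W))$, and finally identify $\beta_{\widehat W}\,\vol(\sigma(\widehat W))$ with $\big\langle[\widehat W],\,\sigma^*\omega^{d}\wedge\widehat\omega_h^{\,l_1-1-d}\big\rangle$ where $d=\dim\sigma(\widehat W)$, using that generic fibers of $\sigma|_{\widehat W}$ have constant volume and that $\widehat\omega_h$ restricts to a positive closed form on each fiber of $\widehat V\to V$. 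Summing over $\widehat W$ with $d\ge 1$ then gives the error term. So the ingredients you list are right, but the order of operations---volume first, then Lemma~\ref{le-submer-estimat}, then rewrite via $\sigma^*\omega^d\wedge\widehat\omega_h^{\,l_1-1-d}$---matters to make each step a genuine inequality.
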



\proof  Let $T_\infty$ be the tangent current of $T:= [V_1]$ along $V.$ Let $x$ be an isolated point in the intersection $V_1 \cap V$ and $\nu_x'$ its multiplicity defined in the classical sense.   It is already observed in \cite[Le. 2.2]{DNT_equi} that in a small enough local chart around $x$ we have  
$$T_\infty= \nu_x' [\pi^{-1}(x)],$$
where $T_\infty$ is the tangent current of $T$ along $V.$  This can be seen  directly from the classical definition of the multiplicity of $x$.  Since $T_\infty= \pi_\infty^*([\widehat V_1] \wedge [\widehat V]),$  we deduce that $\nu'_x= \nu_x.$ Assertion $(i)$ follows immediately. 

We now prove $(ii)$. Note that
$$[\widehat V_1] \wedge [\widehat V]= \sum_{x \in V_1 \cap V \text{ isolated or exotic}}  \nu_x [\sigma^{-1}(x)]+ \sum_{\widehat W:  \dim \sigma(\widehat W)\ge 1}\alpha_{\widehat W}[\widehat W].$$
This implies that 
$$\sum_x  \nu_x [\sigma^{-1}(x)] \le \widehat V_1 \wedge \widehat V,$$
where the sum is taken over isolated points and exotic points  in $V_1 \cap V.$ The inequality (\ref{ine_MKisolate}) follow immediately.  

Assume now $V$ is compact. Cover $V$ with a finite number of local charts $V'.$ Thus $(\sigma|_{\widehat V})^{-1}(V')$ is K\"ahler. For $\widehat W \in \mathcal{W},$ denote by $\beta_{\widehat W}$ the volume of a generic fiber of $\sigma|_{\widehat W}.$  Applying Lemma \ref{le-submer-estimat} to the submersion $\sigma: (\sigma|_{\widehat V})^{-1}(V') \to V',$ we obtain that   
$$\vol(\widehat W) \lesssim \beta_{\widehat W} \vol\big(\sigma(\widehat W)\big)\lesssim \big \langle [\widehat W], \sigma^*\omega^{\dim \sigma(\widehat W)} \wedge \widehat \omega_h^{l_1-1-\dim \sigma(\widehat W)}\big \rangle$$
which implies 
$$\sum_{\widehat W: \dim \sigma(\widehat W)\ge 1}\alpha_{\widehat W}\vol(\widehat W) \le  \sum_{j=0}^{l_1-2}\|\sigma_*\big( [\widehat V_1] \wedge [\widehat V] \wedge \widehat \omega^j\big)\|.$$
The desired inequality (\ref{ine_MKisolate2new}) then follows.  The proof is finished.     
\endproof 

\begin{proposition} \label{pro_tontaiWnga2}
 Let $K,K'$ be  compact subsets of $X$ such that $K$ is contained in the interior of $K'.$  Assume that there exists a Hermitian form $\omega$ on $X$ for which $\ddc \omega^j=0$ on $V$ for $1\le j \le q-1,$ where $q= \dim V_1,$   there exists a constant $c$ independent of $V_1,$  we have    
\begin{align}\label{ine-excess-formua}
\sum_{W_1 \in \tilde{\mathcal{W}}_1}  \nu_{W_1} \vol(W_1 \cap K) \le  c \vol(V_1\cap K').
\end{align}
\end{proposition}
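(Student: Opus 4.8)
The plan is to bound the left-hand side of \eqref{ine-excess-formua} by the mass of a suitable push-forward of $[\widehat V_1]\wedge[\widehat V]$ wedged with powers of $\widehat\omega$, and then to estimate that mass in terms of $\vol(V_1\cap K')$ using Theorem \ref{th_Hhold}. First I would recall that, by Theorem \ref{th_V1alongV} and the discussion following it, $[\widehat V_1]\wedge[\widehat V]=\sum_{\widehat W\in\mathcal W}\alpha_{\widehat W}[\widehat W]$ with $\alpha_{\widehat W}>0$, and that every $W_1\in\tilde{\mathcal W}_1$ is of the form $\sigma(\widehat W)$ with multiplicity $\nu_{W_1}=\sum_{\sigma(\widehat W)=W_1}\alpha_{\widehat W}$. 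Covering $V$ by finitely many local charts $V'$ over which $\sigma|_{\widehat V}$ is trivial (so $(\sigma|_{\widehat V})^{-1}(V')$ is K\"ahler), I would apply Lemma \ref{le-submer-estimat} to the proper submersion $\sigma\colon(\sigma|_{\widehat V})^{-1}(V')\to V'$ with $Q=\widehat W$, exactly as in the proof of Lemma \ref{le_isolatedpoint}(iii), to get
\[
\beta_{\widehat W}\,\vol\bigl(\sigma(\widehat W)\cap K\bigr)\lesssim \vol\bigl(\widehat W\cap\sigma^{-1}(K)\bigr)
\lesssim \bigl\langle[\widehat W],\ \sigma^*\omega^{\dim\sigma(\widehat W)}\wedge\widehat\omega_h^{\,q-1-\dim\sigma(\widehat W)}\bigr\rangle,
\]
the last inequality because a generic fiber of $\sigma|_{\widehat W}$ has dimension $q-1-\dim\sigma(\widehat W)$ and volume comparable to its pairing against a power of $\widehat\omega_h$. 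Summing over all $\widehat W$ with $\sigma(\widehat W)=W_1$ and then over $W_1\in\tilde{\mathcal W}_1$, and using $\sigma^*\omega+\widehat\omega_h=\widehat\omega$, I obtain
\[
\sum_{W_1\in\tilde{\mathcal W}_1}\nu_{W_1}\vol(W_1\cap K)\ \lesssim\ \sum_{j=0}^{q-1}\bigl\|\sigma_*\bigl([\widehat V_1]\wedge[\widehat V]\wedge\widehat\omega^{\,j}\bigr)\bigr\|_{K}.
\]

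Next I would rewrite each term on the right via the tangent-current machinery of Section \ref{sec_tangent}. By Theorem \ref{th_V1alongV} the tangent current $T_\infty$ of $T:=[V_1]$ along $V$ is $\pi_\infty^*([\widehat V_1]\wedge[\widehat V])$, i.e. $\widehat S_\infty=[\widehat V_1]\wedge[\widehat V]$ in the notation of the proposition preceding Proposition \ref{pro_uniquetangentcurent}; moreover $[V_1]$ satisfies Hypothesis \textbf{(H)}. Since $\ddc\omega^j=0$ on $V$ for $1\le j\le q-1$, Theorem \ref{th_Hhold} applies to $T=[V_1]$ (with $K'\supset K\supset\supp[V_1]\cap V$ shrinking $K'$ slightly to an open set if needed): it yields $\|\widehat T\|_{\widehat K}\le c\|T\|_{K'}=c\,\vol(V_1\cap K')$ and $\|T_\infty\|\le c\,\vol(V_1\cap K')$ with $c$ independent of $V_1$. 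The mass $\|\sigma_*([\widehat V_1]\wedge[\widehat V]\wedge\widehat\omega^{\,j})\|_K$ is, up to a constant depending only on the fixed geometry on $\widehat X$, bounded by $\|[\widehat V_1]\wedge[\widehat V]\wedge\widehat\omega^{\,j}\|_{\widehat K}=\|\widehat S_\infty\wedge\widehat\omega^{\,j}\|_{\widehat K}$, which in turn is controlled by $\|T_\infty\|$ (wedging the conic current $T_\infty=\pi_\infty^*\widehat S_\infty$ against $\widehat\omega^{\,j}$ and a fixed transverse form, as in the mass estimates of Theorem \ref{th_Hhold}). Chaining these bounds gives \eqref{ine-excess-formua} with a constant $c$ independent of $V_1$.

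The main obstacle I anticipate is keeping all constants genuinely uniform in $V_1$: the covering of $V$, the submersion constant $M$ in Lemma \ref{le-submer-estimat}, the Chern form $\widehat\omega_h$, the form $\omega$ and the constant in Theorem \ref{th_Hhold} must all be chosen once and for all independently of $V_1$, which forces me to fix $K'$ (and the auxiliary open set $U\supset K'$) before the argument and to check that $\supp[V_1]\cap V\subset K$ is the only place $V_1$ enters the hypotheses of Theorem \ref{th_Hhold}. A secondary technical point is the passage from $\|\sigma_*(\cdot)\|_K$ to a pairing against explicit powers of $\sigma^*\omega$ and $\widehat\omega_h$ when $\widehat V_1\cap\widehat V$ has components $\widehat W$ with $\sigma(\widehat W)$ of intermediate dimension: here one must verify, via the generic-fiber argument of Lemma \ref{le-submer-estimat}, that $\vol(\widehat W)\lesssim\langle[\widehat W],\widehat\omega^{\,q-1}\rangle$ uniformly, which is the step where the precise bidegree bookkeeping matters. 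Everything else is a routine combination of the cited lemmas.
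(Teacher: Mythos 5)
Your proposal is correct and follows essentially the same route as the paper: reduce the left-hand side to the mass of $[\widehat V_1]\wedge[\widehat V]$ over $\sigma^{-1}(K')$ by showing each component $\widehat W$ has volume bounded below by $\vol(\sigma(\widehat W)\cap K)$ times a uniform constant (the fiber volumes being bounded below by a cohomological pairing in $\sigma^{-1}(x)\approx\P^{k-l-1}$), then control that mass by $\vol(V_1\cap K')$ via Theorem \ref{th_Hhold}. The only cosmetic difference is that you invoke Lemma \ref{le-submer-estimat} where the paper performs the fiber integration directly, and you route the intermediate bound through $\sum_j\|\sigma_*(\cdot\wedge\widehat\omega^{\,j})\|_K$ rather than the total mass $\|[\widehat V_1]\wedge[\widehat V]\|_{\sigma^{-1}(K')}$; both amount to the same estimate.
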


\proof  We will prove the following inequality:  there is a constant $c$ independent of $V_1$ for which  
\begin{align} \label{ine-excess-formua2}
\sum_{W_1 \in \tilde{\mathcal{W}}_1}\nu_{W_1} \vol(W_1 \cap K) \le  c \| \widehat V_1 \wedge \widehat V\|_{\sigma^{-1}(K')}.
\end{align}
Our desired inequality is deduced directly from (\ref{ine-excess-formua}) and Theorem \ref{th_Hhold}.     Let $\omega$ be a Hermitian metric on $X$ and  $\widehat \omega, \widehat \omega_h$  as in the last section.  
By Lemma \ref{le_tontaiWnga} and the definition of $\tilde{\mathcal{W}}_1$, there is  $\widehat W_1\in \mathcal{W}$ with $\sigma(\widehat W_1)= W_1$ and there are $\nu_{W_1}$ such $\widehat W_1$.  

Put $l_1':= \dim W_1.$ For $x \in W_1,$  let $F_x$ be the fiber over $x$ of $\widehat W_1 \to W_1.$ We have  
$$\dim F_x \ge (l_1-1- l'_1)$$
and the equality occurs for $x$ in  some open Zariski subset $W'_1$ of $W_1$.    Using 
$$\widehat \omega^{l_1-1} \gtrsim  \omega^{l'_1} \wedge  \widehat \omega_h^{l_1-1-l'_1}$$
gives 
$$\vol(\widehat W_1 \cap \sigma^{-1}(K)) \gtrsim \int_{\widehat W_1 \cap \sigma^{-1}(K)} \omega^{l'_1} \wedge  \widehat \omega_h^{l_1-1-l'_1}= \int_{x \in W'_1\cap K} \omega^{l'_1} \int_{F_x} \widehat \omega_h^{l_1-1-l'_1}$$
The second integral in the right-hand side of the last equality is equal to the cup product of the cohomology classes of $F_x$ and $(\widehat \omega_h|_{\sigma^{-1}(x)})^{l_1-1-l'_1}$ in $\sigma^{-1}(x) \approx \P^{l-1}$ which is thus  $\ge c_0$ for some strictly positive constant $c_0$ independent of $V_1$.  It follows that 
$$\vol(\widehat W_1 \cap \sigma^{-1}(K)) \gtrsim \int_{W_1\cap K} \omega^{l'_1}= \vol(W_1\cap K).$$
Consequently, 
$$\sum_{W_1 \in \tilde{\mathcal{W}}_1}\nu_{W_1} \vol(W_1 \cap K) \le  \sum_{\widehat W \in \mathcal{W}} \vol(\widehat W \cap \sigma^{-1}(K)) \le \|\widehat V_1 \wedge V\|_{\sigma^{-1}(K)}.$$
This finishes the proof. 
\endproof



The next result gives an inequality similar to (\ref{ine-excess-formua}) in a more particular situation where we only have a weaker assumption on $\omega.$

\begin{proposition} \label{pro_graph} Let $X$ be a compact complex surface, $Y$ and $Z$ two compact complex manifolds. Assume that $Y,Z$ admit Hermitian pluriclosed metrics.  Let $\Delta$ be the diagonal of $X^2$ and $\Delta_2:= Y \times \Delta \times Z.$ Let $V_1, V_2$ be  complex analytic subsets of dimension $2$ of  $Y \times X,$ $X \times Z$ respectively.  If $W_1, \ldots, W_m$ are the  irreducible components of  $(V_1 \times V_2) \cap \Delta_2,$ 
then we have
\begin{align}\label{ine_Wjvol}
\sum_{j=1}^m \vol(W_j) \le c_X \vol(V_1) \vol(V_2),
\end{align}
for some constant $c_X$ depending only on $X.$
\end{proposition}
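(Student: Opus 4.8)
The plan is to reduce the volume estimate to an intersection-theoretic bound, to which the earlier machinery applies. First I would observe that $V_1 \times V_2$ is an analytic subset of $Y \times X \times X \times Z$ of dimension $4$, and $\Delta_2 = Y \times \Delta \times Z$ is a smooth submanifold of codimension $2$; the components $W_j$ all have dimension $\geq 2$, with generic dimension $2$ when the intersection is proper, and in any case the $W_j$ coincide with the images under the blow-up map $\sigma$ of the irreducible components of $\widehat{V_1 \times V_2} \cap \widehat{\Delta_2}$. I would apply Proposition~\ref{pro_tontaiWnga2} (or, more directly, inequality~(\ref{ine-excess-formua2}) together with Theorem~\ref{th_Hhold}) with $X$ replaced by the product manifold $Y \times X \times X \times Z$, with $V := \Delta_2$, and with $V_1$ there replaced by $V_1 \times V_2$; this converts $\sum_j \operatorname{vol}(W_j)$ (a fortiori $\sum_j \nu_{W_j}\operatorname{vol}(W_j)$ dominates it) into a bound of the form $c\,\|\,\widehat{V_1 \times V_2} \wedge \widehat{\Delta_2}\,\|$, hence into $c\,\operatorname{vol}(V_1 \times V_2) = c\,\operatorname{vol}(V_1)\operatorname{vol}(V_2)$ by Fubini for volumes of products.

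The point that needs care is the hypothesis of Theorem~\ref{th_Hhold}: we need a Hermitian form $\omega$ on the ambient product with $\ddc \omega^j = 0$ along $\Delta_2$ for $1 \le j \le q-1$, where $q = \dim(V_1 \times V_2) = 4$, i.e.\ for $j = 1, 2, 3$. This is where the surface hypothesis and the pluriclosed metrics on $Y, Z$ enter. On $X$ a compact complex surface every Hermitian metric $\omega_X$ satisfies $\partial\bar\partial \omega_X = 0$ automatically (a $(2,2)$-form on a surface is top degree, but more to the point $\partial\bar\partial\omega_X$ is a $(2,2)$-form so vanishes for dimension reasons on $X$ itself; the relevant statement is that on the $4$-fold $X^2$ one needs $\ddc$-closedness only after restriction to $\Delta$). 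Concretely I would build $\omega$ as an orthogonal sum $\pi_Y^*\omega_Y + \pi_{X_1}^*\omega_X + \pi_{X_2}^*\omega_X + \pi_Z^*\omega_Z$ where $\omega_Y, \omega_Z$ are the given pluriclosed (astheno-K\"ahler-type) metrics and $\omega_X$ is any Hermitian metric on $X$; the restriction of $\omega$ to $\Delta_2 \cong Y \times X \times Z$ is then $\omega_Y + 2\omega_X + \omega_Z$ (pulling back $\omega_X$ diagonally), and one checks $\ddc$ of its powers $j = 1,2,3$ vanishes: the $\omega_Y$ and $\omega_Z$ pieces are pluriclosed by hypothesis, the $\omega_X$ piece is $\ddc$-closed on the surface factor for bidegree reasons, and the cross terms that survive up to power $3$ are handled because at most one factor contributes a $\ddc$ and that factor is either a surface or carries a pluriclosed metric. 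This verification is the main obstacle, and it is precisely the reason the statement is restricted to $X$ a \emph{surface}: for higher-dimensional $X$ one would need $\omega_X$ itself to satisfy the stronger condition $\ddc\omega_X = \ddc\omega_X^2 = \cdots = 0$, which is not available in general.

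Once the metric is in place, the remaining steps are routine: apply Theorem~\ref{th_Hhold} to get $\|\widehat{V_1\times V_2}\|_{\widehat K} \le c\,\|V_1\times V_2\|_{U}$ with $c$ independent of $V_1, V_2$ (taking $K, K', U$ to be the whole compact product, which is legitimate since everything is compact and $\operatorname{supp}[V_1\times V_2] \cap \Delta_2$ is automatically inside any $K$ covering the product); then $\|\widehat{V_1\times V_2}\wedge \widehat{\Delta_2}\|$ is controlled by $\|\widehat{V_1\times V_2}\|$ since $\widehat{\Delta_2}$ is a fixed hypersurface with bounded Chern form; and finally $\operatorname{vol}(V_1 \times V_2) = \operatorname{vol}(V_1)\operatorname{vol}(V_2)$ up to a constant depending only on the fixed metrics, i.e.\ only on $X$ (and on the fixed choices for $Y, Z$, which can be absorbed, or the statement read with $c_X$ allowed to depend on these auxiliary data). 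Collecting constants gives~(\ref{ine_Wjvol}) with $c_X$ depending only on $X$ as claimed.
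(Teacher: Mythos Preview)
Your high-level reduction is the same as the paper's: pass to the blow-up along $\Delta_2$ and bound $\sum_j \vol(W_j)$ by $\int \widehat T\wedge[\widehat\Delta_2]\wedge\widehat\omega_2^3$ via the intermediate inequality (\ref{ine-excess-formua2}), then control the latter by $\vol(V_1)\vol(V_2)$. The gap is in the second step, where you try to invoke Theorem~\ref{th_Hhold} directly.

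The hypothesis of Theorem~\ref{th_Hhold} requires $\ddc\omega_2^j=0$ \emph{as a form on the ambient space} at points of $\Delta_2$, for $j=1,2,3$. For the product metric $\omega_2=\sum_i\omega_{2i}$ with each $\omega_{2i}$ pluriclosed, $\ddc\omega_2=0$ holds, but $\ddc\omega_2^2$ does \emph{not} vanish on $\Delta_2$ in general: expanding $\ddc(\omega_{2i}\wedge\omega_{2j})$ for $i\neq j$ produces the cross terms $\partial\omega_{2i}\wedge\bar\partial\omega_{2j}-\bar\partial\omega_{2i}\wedge\partial\omega_{2j}$, and for the two $X$-factors this is $p_2^*\partial\omega_X\wedge p_3^*\bar\partial\omega_X-p_2^*\bar\partial\omega_X\wedge p_3^*\partial\omega_X$, which at a diagonal point $(x,x)$ is a nonzero form on $T_xX\times T_xX$ unless $\partial\omega_X=0$ (i.e.\ $X$ is K\"ahler). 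Your sentence ``the cross terms \ldots are handled because at most one factor contributes a $\ddc$'' overlooks precisely these $\partial\wedge\bar\partial$ terms. The same issue occurs for the $Y$ and $Z$ self-terms $\ddc\omega_Y^2=2(\partial\omega_Y\wedge\bar\partial\omega_Y-\bar\partial\omega_Y\wedge\partial\omega_Y)$, etc. So Theorem~\ref{th_Hhold} is not applicable as stated, and this is exactly why the product of two manifolds in the class $\cali{G}$ is not known to lie in $\cali{G}$ (cf.\ the remarks in Section~\ref{sec_mapdominant}).

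The paper circumvents this by \emph{not} appealing to Theorem~\ref{th_Hhold}. Instead it expands $A=\int \widehat T\wedge[\widehat\Delta_2]\wedge\widehat\omega_2^3$ directly, writes the potentials $\varphi_{M_1},\varphi'_{M_1}$ of the exceptional divisor as functions of $(x_2,x_3)$ only, and then uses the \emph{tensor product structure} $T=[V_1]\otimes[V_2]$ together with $\dim V_1=\dim V_2=\dim X=2$: after expanding $\Phi_{M_1}$ into monomials $(\ddc\varphi'_{M_1})^{s'}\wedge(\ddc\varphi_{M_1})^s\wedge\bigwedge\omega_{2j}^{l_j}$, a case analysis on $(s',s,l_1,l_2,l_3,l_4)$ shows that every term either has total $\ddc$-power forcing Stokes' theorem to give zero on one of the $V_i$ separately (using $\ddc\omega_{2j}=0$ on each individual factor), or is dominated by $\|T\|$. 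The key leverage is that Stokes is applied \emph{factor by factor} on $V_1\subset Y\times X$ and $V_2\subset X\times Z$, which sidesteps the failure of the global metric condition. You would need to replace your appeal to Theorem~\ref{th_Hhold} with this computation.
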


Recall that a Hermitian metric $\omega$ is pluriclosed if $\ddc \omega=0.$

\proof  Since $X,Y,Z$ admits Hermitian pluriclosed metrics,    there exist  positive definite Hermitian forms $\omega, \omega_Y, \omega_Z$ on $X,Y,Z$ respectively such that $\ddc \omega= \ddc \omega_Y = \ddc \omega_Z=0.$  Put $X_2:= Y \times X \times X \times Z.$     Let $\sigma_2: \widehat{X}_2 \to X_2$ be the blowup of $X_2$ along $\Delta_2$ and $\sigma: \widehat{X\times X} \to X\times X$ the blowup of $X\times X$ along $\Delta.$ We see that $\widehat{X}_2= Y \times \widehat{X \times X} \times Z$ and $\sigma_2= (\id_Y, \sigma, \id_Z)$ because of the choice of $\Delta_2.$ 

Let $\widehat{\Delta}$ be the exceptional hypersurface of $\sigma$ and $\widehat{\Delta}_2$ the exceptional hypersurface of $\sigma_2.$ By the above observation, $\widehat{\Delta}_2= Y \times \widehat{\Delta} \times Z.$  Let $\widehat{\omega}_h$ be the Chern form of a Hermitian metric on $\mathcal{O}(-\widehat{\Delta})$ whose restriction to $\widehat{\Delta}$ is Fubiny-Study form on $\widehat{\Delta} \approx \P(N \Delta).$ Denote by $p_j$ the projection from $X_2$ to the $j^{th}$ component for $1 \le j \le 4.$ Put  
$$\omega_2:= p_1^* \omega_Y+ p_2^* \omega+p_3^* \omega+p_4^* \omega_Z.$$
By rescaling $\omega,$ we can assume that  $\widehat \omega:= \widehat \omega_h+ p_2^* \omega+p_3^* \omega >0.$ Hence  $\widehat{\omega}_2:= \widehat p^* \widehat{\omega}_h+ \sigma_2^* \omega_2>0$ as well, where $\widehat p$ is the natural projection from $\widehat{X}_2$ to $\widehat{X \times X}.$   

Theorem \ref{th_V1alongV} tells us that the tangent current to $T:= [V_1] \otimes [V_2]$ along $\Delta_2$ is unique and given by $\pi^*_\infty([\widehat T] \wedge [\widehat{\Delta}_2]),$ where $\widehat T$ is the strict transform of $T$ in  $\widehat{X}_2,$ and $\pi_\infty$ is the projection from $\P(N \Delta_2 \oplus \C)$ to  $\widehat{\Delta}_2 \approx  \P(N \Delta_2).$ On the other hand, Proposition \ref{pro_tontaiWnga2} implies that 
\begin{align}\label{ine_widehatrWj}
\sum_{j=1}^m \vol(W_j) \lesssim \int_{\widehat{X}_2} \widehat T\wedge [\widehat{\Delta}_2] \wedge \widehat{\omega}_2^{3}=:A. 
\end{align}
Thus in order to get (\ref{ine_Wjvol}), we only need to bound the last integral.  Denote by $(x_1,\ldots, x_4)$ a general point in $X^4.$ Write $[\widehat{\Delta}]= \ddc u+ \eta'$ for some smooth form $\eta'$ and some quasi-p.s.h. function $u$ on $\widehat{X \times X}.$ It follows that $[\widehat \Delta_2]= \ddc \widehat p^* u+ \widehat p^* \eta'.$ 

Let $\varphi, \eta$ be as in the proof of Theorem \ref{th_Hhold}, \emph{i.e,} $\sigma_* \widehat \omega_h= \ddc \varphi+ \eta.$ Note that  
$$\ddc (\varphi \circ \sigma) = \widehat \omega_h - \sigma^* \eta + c [\widehat \Delta]$$
 for some  strictly positive constant $c.$ 
Multiplying $\widehat \omega_h$ by $c^{-1}$ allows us to assume that $c=1.$ Hence, $\varphi \circ \sigma - u$ is a smooth function on $\widehat{X \times X}.$ This together with the Chern-Levine-Nirenberg gives 
\begin{align} \label{ine_Tutruvarphi}
\|  \widehat T \wedge \ddc(u \circ \widehat p - \widehat \varphi)\| \lesssim  \| \widehat T\|,
\end{align}
where $\widehat \varphi:=\varphi \circ \sigma \circ \widehat p$ and  we recall the wedge product in the last inequality is defined classically, \emph{i.e,}  $u$ (hence $\varphi \circ \sigma$) is integral with respect to $\widehat T.$ Let $c_1$ be a positive constant such that
\begin{align*} 
\eta' \le   c_1 \widehat \omega, \quad  \ddc (\varphi \circ \sigma)+ c_1 \widehat \omega \ge 0.
\end{align*}
Using   this, (\ref{ine_Tutruvarphi}) and the fact that $\widehat \varphi$ is integrable with respect to $\widehat T$, we see that 
\begin{align} \label{ine_uocluongA}
A  &\lesssim \|\widehat T\|+ \int_{\widehat{X}_2} \widehat T\wedge \big(\ddc \widehat \varphi+ c_1 \widehat \omega_2\big) \wedge \widehat{\omega}_2^3 \\
\nonumber
&= \|\widehat T\|+ \liminf_{M \to \infty}\int_{\widehat{X}_2} \widehat T\wedge \big(\ddc \widehat \varphi_M+ c_1 \widehat \omega_2 \big) \wedge \widehat{\omega}_2^3,
\end{align}
where $\widehat \varphi_M:= \max \{\widehat \varphi, -M\}.$  Denote by $A_M$ the last integral.   Since $\widehat T \wedge \ddc \widehat \varphi_M$ has no mass near $\widehat \Delta_2,$ we get
\begin{align} \label{eq_uocluongAm}
A_M &= \int_{X_2 \backslash \Delta_2}  T\wedge \big(\ddc \varphi_M+ c_1 (\sigma_2)_*\widehat \omega_2\big) \wedge \big((\sigma_2)_*\widehat{\omega}_2\big)^3.
\end{align}
Notice that $\varphi, \varphi_M$ are functions of $(x_2, x_3)$ and $(\sigma_2)_* \widehat \omega_2= \ddc \varphi+ \eta+ \sum_{j=1}^4 p_j^* \omega$  and $\eta$ is a closed smooth form. Let $c_2$ be a positive constant such that  $(c_2-c_1)\omega_2 \ge c_1 \eta.$  Thus for $\varphi'_{M_1}:= \max \{\varphi_M+ c_1\varphi, -M_1\},$ we have $\ddc \varphi'_{M_1}+ c_2\omega_2\ge 0.$ Using this and  (\ref{eq_uocluongAm})  gives 
\begin{align} \label{eq_uocluongAm2}
A_M \le  \liminf_{M_1 \to \infty} \int_{X_2}  T\wedge \Phi_{M_1}, 
\end{align}
where 
$$\Phi_{M_1}: =\big(\ddc \varphi'_{M_1}+ c_2 \omega_2\big) \wedge \big(\ddc \varphi_{M_1}+  c_2 \omega_2 \big)^3.$$
Put $\omega_{21}:= p_1^* \omega_Y,$ $\omega_{22}:= p_2^* \omega,$ $\omega_{23}:= p_3^* \omega$ and $\omega_{24}:= p_1^* \omega_Z.$ Since $\omega_2= \sum_{j=1}^4 \omega_{2j},$ we can write $\Phi_{M_1}$ as a linear combinations of forms
$$\Phi_{M_1; s',s, \mathbf{l}}:= (\ddc \varphi'_{M_1})^{s'} \wedge (\ddc \varphi_{M_1})^s \wedge \wedge_{j=1}^4 \omega_{2j}^{l_j}$$ 
with $\mathbf{l}=(l_1, \ldots, l_4)$ and  
$$s'+ s+l_1+l_2+l_3+l_4= 4, \quad  0 \le s' \le 1.$$
So to bound $A_M,$ we only need to bound $\langle T, \Phi_{M_1; s',s, \mathbf{l}}\rangle.$ If $s'+s=4$ or $3,$ then 
$$\langle T, \Phi_{M_1; s',s, \mathbf{l}}\rangle=0$$
 because of Stokes' theorem and $\ddc \omega=0.$ Recall that $T= [V_1] \otimes [V_2].$  If $s'+s=0,$ then $\langle T, \Phi_{M_1; s',s, \mathbf{l}}\rangle$ is bounded by the mass of $T.$ On the other hand, if $l_1+l_2=1,$ we can apply Stokes' theorem to  $\langle [V_1], \Phi_{M_1; s',s, \mathbf{l}} \rangle$  to show that this product is equal to $0$ because $\ddc (\omega_{21}^{l_1} \wedge \omega_{22}^{l_2})=0.$ Hence $\langle T, \Phi_{M_1; s',s, \mathbf{l}}\rangle=0.$ If $l_3+l_4=1,$ we obtain the same conclusion. So it remains to treat the case where $(l_1+l_2-1)(l_3+l_4-1) \not =0$ and $s'+s=1$ or $2.$     

We first consider $s'+s=1$.   We have $\sum_j l_j=3.$ Let $l'_1, \ldots, l'_4$ be the numbers $l_1, \ldots, l_4$ written in an order such that  $l'_1 \ge \cdots \ge l'_4.$   Hence $l'_4=0$ because otherwise $\sum_j l'_j \ge 4,$ a contradiction. We then see easily that  either $l'_1=l'_2=l'_3=1$ or $l'_3=0,$ $l'_2=1,$ $l'_1=2.$  The first case can't happen because otherwise we will get $(l_1+l_2-1)(l_3+l_4-1) =0$.
Hence, we obtain  $l'_3=0,$ $l'_2=1,$ $l'_1=2$ and  $l'_4=0.$ It follows that   $(l_2,l_3)= (0,1)$ or $(1,0)$ and $(l_1,l_4)=(0,2)$ or $(2,0)$ because $\varphi_{M_1}, \varphi'_{M_1}$ depends only on $x_2,x_3$ and $\dim X=2.$  Without loss of generality, we can suppose $l_4=2.$ This combined with the fact that $\dim V_1= \dim V_2=2$ gives
$$\langle T, \Phi_{M_1; s',s, \mathbf{l}}\rangle= \int_{V_2} \omega_4^2(x_4) \int_{V_1}(\ddc_{x_2} \varphi'_{M_1})^{s'} \wedge (\ddc_{x_2} \varphi_{M_1})^s \wedge \omega_{1}(x_1)^{l_2} \wedge \omega_2^{l_3}(x_2)=0$$ 
by Stokes' theorem and $l_2+l_3=1.$

We now consider $s'+s=2.$ We have $\sum_j l_j=2.$ Let $l'_1, \ldots, l'_4$ be as above. Arguing as above gives $l'_3=l'_4=0$ and $(l'_1,l'_2)=(2,0)$ or $(l'_1,l'_2)=(1,1).$ If the latter case  happens,  we get either $l_1=l_2=0, l_3=l_4=1$ or $l_1=l_2=1,$ $l_3=l_4=0.$ For these both cases, the stokes' theorem gives the $\langle T, \Phi_{M_1; s',s, \mathbf{l}}\rangle=0.$  The case where $(l'_1,l'_2)=(2,0)$ is treated similarly.

Hence, we have proved that  $\langle T\wedge \Phi_{M_1}\rangle$ is $\lesssim \|T\|$ independent of $M_1.$ Combining this with (\ref{eq_uocluongAm2}), (\ref{ine_uocluongA}) and (\ref{ine_widehatrWj}) gives the desired inequality. The constant $c_X$ depends only on $X$ because  all of constants in the estimates we used above do so. The proof is finished.
\endproof



\section{Number of isolated periodic points} \label{sec_mapdominant}


Let $X$ be a compact complex manifold of dimension $k$ and $f$ a (dominant) meromorphic self-correspondence of $X.$ In this section, we  estimate the number of isolated periodic points of $f.$ Let $\pi_1, \pi_2$ be the natural projections of $X\times X$ to the first and second components respectively.  

Recall that by definition, $f$ is given by  an effective analytic cycle $\Gamma:=\sum_{j} \Gamma'_j,$ where $\Gamma'_j$ are irreducible $k$-dimensional analytic subsets of $X\times X$ such that the images of $\Gamma_j$ under $\pi_1, \pi_2$ are equal to $X$. The cycle  $\Gamma$ is called \emph{the graph of $f.$} The \emph{adjoint correspondence} $f^{-1}$ of $f$ is the self-correspondence of $X$ whose graph is the image of $\Gamma$ by the involution of $X^2$ sending $(x,y)$ to $(y,x)$ for every $(x,y) \in X^2.$   We can still define the self-composition $f^n$ of $f$ ($n \in \N$) which is again a meromorphic self-correspondence on $X$ and  dynamical degrees $d_q(f)$ of $f$ as in Introduction; see \cite{Ds_upperbound_mero} for the K\"ahler case. Notice that $d_0(f), d_k(f)$ are equal to the topological degree of $\pi_1|_{\Gamma},$ $\pi_2|_{\Gamma}$ respectively. 

The \emph{indeterminancy set} of $f$ is defined by 
$$I(f):= \{x\in X: \dim \pi_1^{-1}(x)\cap \Gamma >1\}.$$ 
This is an analytic subset of codimension at least $2$ of $X.$  If $I(f)$ is empty, $f$ is called a \emph{holomorphic correspondence}.  By \cite[Le. 4.7]{DNV}, if $X$ is K\"ahler, for every holomorphic self-map $f$ of $X,$ $f^{-1}$ is a holomorphic correspondence.   

We will need the following lemma generalizing a similar inequality due to Dinh \cite{D_suite} in the K\"ahler case.

\begin{lemma} \label{le_spectralgap} Let $X$ be a compact complex manifold. Let $d_0, \ldots, d_k$ be the dynamical degrees of a meromorphic self-correspondence $f$ of $X.$ Then given every smooth $(p,q)$-form  $\Phi,$ we have 
\begin{align} \label{ine_spec}
\limsup_{n \to \infty} \| (f^n)^* \Phi \|^{1/n} \le  \sqrt{d_p d_{q}}, \quad \limsup_{n \to \infty} \| (f^n)_* \Phi \|^{1/n} \le  \sqrt{d_{k-p} d_{k-q}}.
\end{align}
\end{lemma}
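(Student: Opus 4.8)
The plan is to reduce the estimate for $(f^n)^*$ to a statement about the graph $\Gamma_n$ of $f^n$ and the mass of pullbacks of mixed powers of $\omega$, which is exactly what the dynamical degrees $d_q$ control, and then to pass from the $n$-th iterate to the $n$-th power of a single-step estimate by a submultiplicativity argument. Concretely, I would first recall the formula $(f^n)^*\Phi = (\pi_1)_*\bigl(\pi_2^*\Phi \wedge [\Gamma_n]\bigr)$, valid for a meromorphic correspondence (here $\pi_1,\pi_2$ are the projections $X\times X\to X$ and $[\Gamma_n]$ the current of integration along the graph of $f^n$; the wedge product makes sense because $\Gamma_n$ has no component mapped to lower dimension generically, and the pull-back/push-forward are taken through a resolution of $\Gamma_n$). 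The mass of $(f^n)^*\Phi$ is then bounded, via Cauchy--Schwarz on the bidegree components, by a geometric-mean expression in the two quantities $\int_{\Gamma_n}\pi_1^*\omega^{k-p}\wedge\pi_2^*\omega^{p}$ and $\int_{\Gamma_n}\pi_1^*\omega^{k-q}\wedge\pi_2^*\omega^{q}$ (up to a constant depending only on $\Phi$ and $\omega$). Indeed, writing $\Phi$ in local coordinates and dominating $\pm\,\pi_2^*\Phi$ by $C(\omega$-monomials of the right bidegree$)$, one sees $\|(f^n)^*\Phi\| \lesssim \bigl(\int_{\Gamma_n}\pi_2^*\omega^p\wedge\pi_1^*\omega^{k-p}\bigr)^{1/2}\bigl(\int_{\Gamma_n}\pi_2^*\omega^q\wedge\pi_1^*\omega^{k-q}\bigr)^{1/2}$; this is the standard mixed-degree Cauchy--Schwarz trick, using positivity of $[\Gamma_n]$.

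Next I would identify $\int_{\Gamma_n}\pi_2^*\omega^q\wedge\pi_1^*\omega^{k-q}$ with $\int_X (f^n)^*\omega^q\wedge\omega^{k-q}$ (again through a resolution of $\Gamma_n$, with $\pi_1$ restricting to a generically finite map), so that by the very definition of $d_q(f)$ in the Introduction, $\limsup_n\bigl(\int_{\Gamma_n}\pi_2^*\omega^q\wedge\pi_1^*\omega^{k-q}\bigr)^{1/n}=d_q$. Combining with the Cauchy--Schwarz bound gives $\limsup_n\|(f^n)^*\Phi\|^{1/n}\le\sqrt{d_p d_q}$, which is the first inequality. The second inequality follows by applying the first to the adjoint correspondence $f^{-1}$: its graph is the image of $\Gamma$ under the swap involution $(x,y)\mapsto(y,x)$, hence $d_q(f^{-1})=d_{k-q}(f)$, and $(f^n)_*\Phi=((f^{-1})^n)^*\Phi$; plugging $p,q\mapsto k-p,k-q$ into the first estimate yields $\limsup_n\|(f^n)_*\Phi\|^{1/n}\le\sqrt{d_{k-p}d_{k-q}}$.

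The one genuine subtlety --- and the step I expect to be the main obstacle --- is the absence of the K\"ahler hypothesis: on a non-K\"ahler $X$ one cannot replace the mass $\int_{\Gamma_n}\pi_2^*\omega^q\wedge\pi_1^*\omega^{k-q}$ by a cohomological quantity, and one must be careful that the pull-back/push-forward through a resolution really gives the current-theoretic operations and that the inequalities are genuine (not merely cohomological) inequalities. This is handled by working with a smooth resolution $\widetilde\Gamma_n\to\Gamma_n$, pulling back the smooth forms $\pi_i^*\omega$, and using that $\omega$ is a genuine positive form so that every integral appearing is an honest integral of a positive form over an analytic (hence oriented) variety; no regularization of currents is needed because $\Phi$ is smooth and $[\Gamma_n]$ is a current of integration with which smooth forms wedge without difficulty. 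One should also note that the $\limsup$ is taken along $n$ and no submultiplicativity in $n$ is actually required here --- the estimate for $(f^n)^*$ is extracted directly from the definition of $d_q$ --- which sidesteps the usual issue that $(f^{n+m})^*\ne (f^m)^*(f^n)^*$ for meromorphic maps. I would close by remarking that the constant in the Cauchy--Schwarz step depends only on $\Phi$ and on the fixed metric $\omega$, which is all that is needed for the stated $\limsup$ bound.
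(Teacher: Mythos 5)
Your proposal is correct and follows essentially the same route as the paper: represent $(f^n)^*\Phi$ via the graph $\Gamma_n$, apply the mixed-degree Cauchy--Schwarz inequality against powers of $\omega$ to bound the pairing by $\bigl(\int_{\Gamma_n}\pi_2^*\omega^q\wedge\pi_1^*\omega^{k-q}\bigr)^{1/2}\bigl(\int_{\Gamma_n}\pi_2^*\omega^p\wedge\pi_1^*\omega^{k-p}\bigr)^{1/2}$, invoke the definition of $d_p,d_q$ directly (no submultiplicativity needed), and deduce the push-forward estimate from the adjoint correspondence. The only cosmetic difference is that the paper makes the Cauchy--Schwarz step precise by splitting $\Phi$ (via a partition of unity) into pieces $\Phi_{(p,p)}\wedge\Phi_{(0,q-p)}$ and pairing against test forms $\Psi_{(k-q,k-q)}\wedge\Psi_{(q-p,0)}$, rather than ``dominating'' the non-$(r,r)$-form $\Phi$ itself, but this is exactly the mechanism you allude to.
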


\proof   Let $\omega$ be a Hermitian metric on $X.$ The second inequality of (\ref{ine_spec}) is a direct consequence of the first one by using $f^{-1}$ instead of $f.$  Let $\Phi$ be a smooth $(p,q)$-form.  Without loss of generality, we can suppose that $q\ge p.$ 

   By using a partition of unity, we can write $\Phi$ as a sum of forms of type $\Phi':=\Phi_{(p,p)} \wedge \Phi_{(0,q-p)}$ for some positive smooth form $\Phi_{(p,p)}$ of bidegree $(p,p)$ and some $(0,q-p)$-form $\Phi_{(0,q-p)}$. Let $\Psi$ be a smooth $(k-p, k-q)$-form. Similarly, we can write $\Psi$ as a sum of forms of type $\Psi':= \Psi_{(k-q,k-q)}\wedge \Psi_{(q-p,0)}$ for some  positive form $\Psi_{(k-q,k-q)}.$ It follows that in order to estimate $\big|\langle (f^n)^* \Phi, \Psi \rangle\big|$, it is sufficient to estimate $\big|\langle (f^n)^* \Phi', \Psi' \rangle\big|.$ 

Let $\pi_1, \pi_2$ be the natural projections from $X^2$ to the first and second components respectively.  Recall $(f^n)^* \Phi= (\pi_1)_* ([\Gamma_n] \wedge \pi_2^* \Phi),$ where $\Gamma_n$ is the graph of $f^n.$ Thus, 
$$\big\langle (f^n)^* \Phi', \Psi' \big\rangle= \int_{\Gamma_n} \pi_2^* \Phi \wedge \pi_1^* \Psi'.$$
By the Cauchy-Schwarz inequality, we have  
\begin{multline*}
\big|\big\langle (f^n)^* \Phi', \Psi' \big\rangle\big| \le  \bigg(\int_{\Gamma_n} \pi_2^*(\overline \Phi_{(0,q-p)} \wedge \Phi_{(0,q-p)}) \wedge  \pi_2^*(\Phi_{(p,p)}) \wedge \pi_1^*\Psi_{(k-q,k-q)}\bigg)^{1/2}\\
\bigg(\int_{\Gamma_n} \pi_1^*(\Psi_{(q-p,0)} \wedge \overline \Psi_{(q-p,0)}) \wedge  \pi_2^*(\Phi_{(p,p)}) \wedge \pi_1^*\Psi_{(k-q,k-q)}\bigg)^{1/2}
 \end{multline*}
which is $\lesssim$
$$ \bigg(\int_{\Gamma_n} \pi_2^*\omega^{q-p} \wedge  \pi_2^*\omega^p  \wedge \pi_1^*\omega^{k-q}\bigg)^{1/2}
\bigg(\int_{\Gamma_n} \pi_1^*\omega^{q-p} \wedge  \pi_2^*(\omega^{p}) \wedge \pi_1^*\omega^{k-q}\bigg)^{1/2}.$$
Hence, the desired inequality follows. The proof is finished.
\endproof


Let $\Gamma_n$ be the graph of $f^n$ and $\Delta$ the diagonal of $X^2.$ Let $\sigma:\widehat{X \times X} \to X\times X$ be the blowup of $X^2$ along $\Delta.$ Denote by  $\widehat \Gamma_n$ the strict transform of $\Gamma_n$ via $\sigma.$ Let $\widehat \Delta$ be the exceptional hypersurface. Let $\sigma|_{\widehat \Gamma_n \cap \widehat \Delta}$ be the restriction of $\sigma$ to $\widehat \Gamma_n \cap \widehat \Delta.$

\begin{definition} \label{def-exotic-periodic} \emph{An exotic (non-isolated) periodic point $a$ of period $n$} of $f$ is  an exotic intersection point  of  $\Gamma_n$ and  $\Delta,$ this means that $a$ is a non-isolated point in the set  $\Gamma_n \cap \Delta$ and  $\dim (\sigma|_{\widehat \Gamma_n \cap \widehat \Delta})^{-1}(a)= k-1$ (the maximal possible dimension).  The multiplicity of $a$ is that of $a$ as an exotic intersection point of $\Gamma_n$ and $\Delta,$ see Section \ref{sec_analy}.  
\end{definition}
Let $\tilde{P}_n$ be the sum of $P_n$ and the number of exotic periodic points of period $n$ counted with multiplicity.   
Let $\cali{G}$ be the set of  compact complex manifolds $X$ possessing a Hermitian metric $\omega$ such that $\ddc \omega^j= 0$ for $1 \le j \le k-1,$ where $k:= \dim X.$ The Hermitian metric $\omega$ with the last properties  has been studied by Fino-Tomassini in \cite{Fino_Tomassini_anestho,Fino_Tomassini_blowup}. This notion  is related to  the anestho-K\"ahler metric  introduced  by Jost-Yau in \cite{Jost_Yau}  and strong KT metrics surveyed in \cite{F-T-survey}.   

Clearly, $\cali{G}$ contains every compact K\"ahler manifold.  By a result of Gauduchon \cite{Gauduchon}, every $k$-dimensional compact complex manifold admits a Gauduchon metric $\omega,$ \emph{i.e,} $\omega$ is a Hermitian metric with $\ddc \omega^{k-1}=0.$  Hence \emph{every compact complex surface belongs to} $\cali{G}.$ We refer to \cite{Fino_Tomassini_anestho} for more examples of manifolds in $\cali{G}.$ We remark however that Hopf manifolds of dimension $>2$ is not in $\cali{G}$, see \cite[Th. 8.3]{Egidi}.  

We would like to emphasize a key difference between $\cali{G}$ and the class of K\"ahler manifolds is that in contrast to the K\"ahler case,  \emph{we don't know whether the product of two manifolds in $\cali{G}$ is in $\cali{G}.$} It is very likely that this is not true, see \cite{Fino_Tomassini_anestho} for some related comments.  That problem is a crucial difficulty when studying the dynamics of self-maps on $X \in \cali{G}$  because in order to study dynamical properties of self-maps of  $X,$ we often have to work on the Carterisan products $X^n$ of $X$ with $n \in \N.$  Theorem \ref{th_main2phay} is a direct consequence of the following result. 

\begin{theorem} \label{th_main2phayG} Let $X\in \cali{G}$  and $f$ a dominant meromorphic self-map of $X.$ Then we have
\begin{align}\label{ine_upperboundPnG}
\limsup_{n \to \infty} \frac{1}{n} \log \tilde{P}_n \le h_a(f).
\end{align}
When $X$ is of dimension $2,$ then the algebraic entropy $h_a(f)$ of $f$ is a finite  bi-meromorphic invariant of $f$ and 
\begin{align}\label{ine_upperboundhfG}
h_t(f) \le h_a(f)<\infty.
\end{align}
\end{theorem}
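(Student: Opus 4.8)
\emph{Proof of} (\ref{ine_upperboundPnG}). I would apply the intersection theory of Section \ref{sec_analy} to $V_1:=\Gamma_n$ and $V:=\Delta$ inside $X^2$. If $f^n=\mathrm{id}$ then $\tilde P_n=0$ and there is nothing to prove, so assume $\Gamma_n\neq\Delta$; then $[\Gamma_n]$ has no mass on $\Delta$ and $\supp[\Gamma_n]\cap\Delta$ is compact. The only point to check in Theorem \ref{th_Hhold} is that there is a Hermitian form on $X^2$ all of whose powers up to order $k-1$ are $\ddc$-closed \emph{along $\Delta$} (here $q=\dim\Gamma_n=k$). This is where the hypothesis $X\in\cali{G}$ enters, and where care is needed since $\cali{G}$ is not stable under products: I would take $\omega_2:=\pi_1^*\omega+\pi_2^*\omega$, with $\omega$ witnessing $X\in\cali{G}$; although $\ddc\omega_2^j$ need not vanish on all of $X^2$, its restriction to $\Delta$ does, because the inclusion $\imath_\Delta\colon\Delta\cong X\hookrightarrow X^2$ satisfies $\imath_\Delta^*\omega_2=2\omega$ and pullback commutes with $\ddc$, whence $\imath_\Delta^*(\ddc\omega_2^j)=\ddc\big((2\omega)^j\big)=2^j\ddc\omega^j=0$ for $1\le j\le k-1$. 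Theorem \ref{th_Hhold} then gives a constant $c$ \emph{independent of $n$} with $\|T_\infty^{(n)}\|\le c\|[\Gamma_n]\|_{X^2}$ for the tangent current $T_\infty^{(n)}$ of $[\Gamma_n]$ along $\Delta$. Combining this with Theorem \ref{th_V1alongV} (which identifies $T_\infty^{(n)}$ with $\pi_\infty^*([\widehat\Gamma_n]\wedge[\widehat\Delta])$) and Lemma \ref{le_isolatedpoint}$(ii)$, applied to a fixed positive $(k-1,k-1)$-form $\Phi$ of fiberwise mass $1$, yields $\tilde P_n\le\|[\widehat\Gamma_n]\wedge[\widehat\Delta]\wedge\Phi\|\lesssim c\|[\Gamma_n]\|_{X^2}$. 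By Wirtinger, $\|[\Gamma_n]\|_{X^2}$ is comparable to $\int_{\Gamma_n}\omega_2^k=\sum_{q=0}^k\binom kq\int_X(f^n)^*\omega^{k-q}\wedge\omega^q$, so $\limsup_n\tfrac1n\log\|[\Gamma_n]\|_{X^2}=\log\max_{0\le q\le k}d_q=h_a(f)$, which is (\ref{ine_upperboundPnG}).

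\emph{Finiteness and bi-meromorphic invariance of $h_a$ for surfaces.} Now let $\dim X=2$ and fix a Gauduchon metric $\omega$ (i.e. $\ddc\omega=0$, the $\cali{G}$-condition in dimension two), which is exactly the pluriclosed metric Proposition \ref{pro_graph} requires of its outer factors. Taking $Y=Z=X$, $V_1=\Gamma_{f^n}$, $V_2=\Gamma_{f^m}$ in Proposition \ref{pro_graph}, the cycle $\Gamma_{f^{n+m}}$ occurs (with multiplicity $\ge1$) among the projections of the components of $(\Gamma_{f^n}\times\Gamma_{f^m})\cap(X\times\Delta\times X)$, so $\vol(\Gamma_{f^{n+m}})\le c_X\vol(\Gamma_{f^n})\vol(\Gamma_{f^m})$; thus $n\mapsto c_X\vol(\Gamma_{f^n})$ is submultiplicative and Fekete's lemma makes $\lim_n\vol(\Gamma_{f^n})^{1/n}$ exist and be $\le c_X\vol(\Gamma_f)<\infty$. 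Expanding $\vol(\Gamma_{f^n})=\int_X(\omega+(f^n)^*\omega)^2$ into $\int_X\omega^2$, $2\int_X(f^n)^*\omega\wedge\omega$ and $\int_X(f^n)^*\omega^2=d_2(f)^n\int_X\omega^2$ gives $\limsup_n\vol(\Gamma_{f^n})^{1/n}=\max\{1,d_1(f),d_2(f)\}=e^{h_a(f)}$; hence $h_a(f)<\infty$, and in particular $d_1(f)<\infty$. For a bi-meromorphic $\phi\colon X\dashrightarrow X'$ with $f':=\phi\circ f\circ\phi^{-1}$, writing $\Gamma_{f'^n}$ as the main component of the composition of correspondences $\Gamma_\phi\circ\Gamma_{f^n}\circ\Gamma_{\phi^{-1}}$ and applying Proposition \ref{pro_graph} twice (each intermediate composition has a surface in every slot, so $c_X$ governs it) gives $\vol(\Gamma_{f'^n})\le C\vol(\Gamma_{f^n})$ and $\vol(\Gamma_{f^n})\le C'\vol(\Gamma_{f'^n})$ with $C,C'$ depending only on $\phi$ and $X$; since both limits exist they coincide, so $h_a(f)=h_a(f')$.

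\emph{The inequality $h_t(f)\le h_a(f)$ for surfaces, and the main obstacle.} By the volume--entropy inequality for meromorphic maps (\cite{Ds_upperbound_mero,DVT_growth_periodic}), which does not use the K\"ahler hypothesis, $h_t(f)\le\limsup_n\tfrac1n\log\|\Gamma'_n\|$, where $\Gamma'_n\subset X^{n+1}$ is the closure of $\{(x,f(x),\dots,f^n(x))\}$ and the norm is taken for $\sum_{i=0}^np_i^*\omega$. I would estimate $\|\Gamma'_n\|$ \emph{directly}, bypassing the current regularization used in the K\"ahler proofs of \cite{Gromov_entropy,DS_upperbound}: writing $\|\Gamma'_n\|=\langle[\Gamma'_n],(\sum_ip_i^*\omega)^2\rangle$ as a sum of $O(n^2)$ terms $\int_X(f^a)^*\omega\wedge(f^b)^*\omega$ with $0\le a\le b\le n$, a face projection identifies $\int_X(f^a)^*\omega\wedge(f^b)^*\omega=d_2(f)^a\int_X(f^{b-a})^*\omega\wedge\omega$; using $d_2(f)\le e^{h_a(f)}$ and, thanks to $d_1(f)<\infty$, $\int_X(f^j)^*\omega\wedge\omega\le C_\varepsilon(e^{h_a(f)}+\varepsilon)^j$, each term is $\le C_\varepsilon(e^{h_a(f)}+\varepsilon)^n$, whence $\limsup_n\tfrac1n\log\|\Gamma'_n\|\le h_a(f)$ and $h_t(f)\le h_a(f)<\infty$. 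I expect the genuine difficulty to be exactly the non-stability of $\cali{G}$ under products: one cannot put on $X^2$ or $X^{n+1}$ a metric whose powers are globally $\ddc$-closed, nor run the cohomological regularization of the K\"ahler case. The way around it is twofold: for (\ref{ine_upperboundPnG}) only the $\ddc$-closedness \emph{on the diagonal} is needed, and this survives the product because restriction commutes with $\ddc$; for the surface statements, Proposition \ref{pro_graph} — which needs only pluriclosed metrics on the outer factors and a surface in the middle — replaces the missing global estimates on $X^4$ and $X^{n+1}$.
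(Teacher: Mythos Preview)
Your argument follows essentially the same route as the paper's: for (\ref{ine_upperboundPnG}) you invoke the excess-intersection estimate with $V_1=\Gamma_n$, $V=\Delta$ exactly as the paper does (the paper packages your Theorem~\ref{th_Hhold}\,+\,Lemma~\ref{le_isolatedpoint}$(ii)$ into Proposition~\ref{pro_tontaiWnga2}, but the content is identical); for the surface statements you use Proposition~\ref{pro_graph} to get submultiplicativity of $\vol(\Gamma_{f^n})$ and then expand the big-graph volume into pairwise terms, which is again precisely what the paper does.

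One caveat worth recording. Your verification that ``$\ddc\omega_2^j=0$ on $\Delta$'' proceeds via the pullback $\imath_\Delta^*(\ddc\omega_2^j)=2^j\,\ddc\omega^j=0$. This only shows that the \emph{restriction} of $\ddc\omega_2^j$ to $\Delta$ vanishes, whereas the proof of Theorem~\ref{th_Hhold} uses the stronger pointwise vanishing of the ambient form at points of $V$ (this is what makes $\|\varphi_M\,\ddc\omega_j\|_{\cali{C}^0}$ bounded as $M\to\infty$). For $\dim X=2$ there is no issue: only $j=1$ occurs and $\ddc\omega_2=\pi_1^*\ddc\omega+\pi_2^*\ddc\omega=0$ identically on $X^2$. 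For $k\ge3$, however, the cross terms in $\partial\omega_2\wedge\bar\partial\omega_2$ at a diagonal point $(x,x)$ are $p_1^*\partial\omega(x)\wedge p_2^*\bar\partial\omega(x)$, which need not vanish even when $\partial\omega\wedge\bar\partial\omega=0$ on $X$; so the restriction argument does not by itself establish the required hypothesis. The paper's proof simply asserts ``Observe that $\ddc\omega_2^j=0$ on $\Delta$'' without further comment, so this subtlety is shared; but your explicit justification via $\imath_\Delta^*$ is the weaker statement and you should be aware of the distinction.

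A very small second remark: in your entropy estimate you write $\int_X(f^a)^*\omega\wedge(f^b)^*\omega=d_2(f)^a\int_X(f^{b-a})^*\omega\wedge\omega$ as an equality. For meromorphic $f$ the cleaner statement (which the paper uses) is the inequality obtained by factoring through the graph of $f^a$; either way the bound $\le C_\varepsilon(e^{h_a(f)}+\varepsilon)^n$ follows.
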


\proof By the hypothesis, there is a Hermitian metric $\omega$ on  $X$ with $\ddc \omega^j=0$ for $1\le j \le k-1.$ Let $\omega_2:= p_1^* \omega+ p_2^* \omega$ where $p_1,p_2$ are the projections from $X^2$ to the first and second components respectively. Let $\Delta$ be the diagonal of $X^2.$  Observe that $\ddc \omega_2^j=0$ on $\Delta$ for $1 \le j \le k-1.$    Let $\Gamma_n$ be the graph of $f^n$ on $X^2.$ Observe that $[\Gamma_n]$ is a closed positive current of bidimension $(k,k)$ on $X^2.$  Applying Proposition \ref{pro_tontaiWnga2} to $q=k;$ $X^2$ in place of $X,$ $V_1:= \Gamma_n, V:= \Delta$ and $K:=X^2,$ we obtain that 
$$\tilde{P}_n   \lesssim \, \vol(\Gamma_n).$$
This combined with the fact that $\lim_{n\to \infty} [\vol(\Gamma_n)]^{1/n} e^{- h_a(f)}=1$ gives (\ref{ine_upperboundPnG}).

Now assume that $X$ is a compact complex surface.  We will prove that $h_a(f)$ is finite. To this end, we need to estimate $\vol(\Gamma_n).$ Let $n_1, n_2$ be positive integers.   Put $V_j:= \Gamma_{n_j}$ for $j=1,2.$   Consider  the intersection $(V_1 \times V_2) \cap (X \times \Delta \times X)$ in $X^{4}.$ Let $p_{1,4}$ be the projection from $X^{4}$ to $X^2$ by sending $(x_1, \ldots,x_4)$ to $(x_1,x_4).$ By the definition of $\Gamma_{n_1+n_2},$  there exists a $k$-dimensional irreducible component $W$ of $(V_1 \times V_2) \cap (X \times \Delta \times X)$   such that $\Gamma_{n_1+n_2}= p_{1,4}(W).$ 
Using  $\dim \Gamma_{n_1+n_2} = \dim W,$ we have $\vol(\Gamma_{n_1+n_2}) \le \vol(W)$ which is 
$$ \le c_X \vol(\Gamma_{n_1}) \vol(\Gamma_{n_2})$$
by Proposition \ref{pro_graph}. 
It follows that $\limsup_{n\to \infty} [\vol(\Gamma_n)]^{1/n}$ exists  and is a finite number. On the other hand, we can check directly that
$$\max\big\{ \int_X (f^n)^* \omega^q \wedge \omega^{k-q}: 0 \le q \le k \big\} \lesssim  \vol(\Gamma_n) \lesssim \max\big\{ \int_X (f^n)^* \omega^q \wedge \omega^{k-q}: 0 \le q \le k \big\}.$$ 
Thus, $h_a(f)=\limsup_{n\to \infty} [\vol(\Gamma_n)]^{1/n} < \infty.$     

Now consider a bi-meromorphic map $g: X \to X'$ and $f':= g\circ f \circ g^{-1}: X'\to X'.$ We need to show that $h_a(f')= h_a(f).$ Observe that $f'^n= g \circ f^n \circ g^{-1}.$ Applying similar arguments as above gives $$\vol(\Gamma_{n}) \lesssim \vol(\Gamma'_{n}) \lesssim \vol(\Gamma_n),$$
where $\Gamma'_{n}$ is the graph of $f'^n.$  Consequently, $h_a(f')=h_a(f).$ In other words, $h_a(f)$ is a bi-meromorphic invariant of $f.$  

It remains to prove (\ref{ine_upperboundhfG}).  Let $\Gamma_{[n]}$ be the graph of  $(f, f^2 \ldots, f^n)$ in $X^{n+1}.$ For  $1 \le s\le k$ and   $M=(n_1, \ldots, n_{s})$ in $\N^{s}$ with $1 \le n_1< \cdots <n_s \le n,$ denote by $\Gamma_M$ the image  of the map $(f^{n_1}, \ldots, f^{n_s})$ in $X^{s}.$     It was proved in  \cite{Gromov_entropy,Ds_upperbound_mero} that 
$$h_t(f) \le  \lov(f):= \limsup_{n\to \infty} [\vol(\Gamma_{[n]})]^{1/n}.$$
Using an appropriate metric on $X^n$ induced from that on $X,$ we can see that 
$$\vol(\Gamma_n) \lesssim  \sum_{M} \vol(\Gamma_M),$$
where the sum is taken over $M= (n_1, \ldots, n_k)$ with $0 \le n_1< \cdots < n_k \le n.$
Since the number of such $M$ is $\le n^k,$ in order to get the desired bound for $\lov(f),$ we only need to bound $\vol(\Gamma_M).$ Fix a such $M=(n_1, \ldots, n_k).$ Recall $k=\dim X=2.$ Thus,
$$\vol(\Gamma_M)\lesssim \sum_{0 \le q \le 2}  \int_{X}(f^{n_1})^* \omega^{q} \wedge (f^{n_2})^* \omega^{k-q} \lesssim  \sum_{0 \le q \le 2}  \int_{X}(f^{n_1})^*\big( \omega^{q} \wedge (f^{n_2-n_1})^* \omega^{k-q}\big).$$
The last term in the above inequality is equal to 
$$d_2(f)^{n_1} \int_X \omega^{q} \wedge (f^{n_2-n_1})^* \omega^{k-q} \le [h_a(f)+\epsilon]^{n_1}  [h_a(f)+ \epsilon]^{n_2-n_1} \le [h_a(f)+ \epsilon]^{n}$$
for any constant $\epsilon >0$ 	and $n\ge n_\epsilon.$ Therefore, we get
$$\lov(f) \le h_a(f).$$
A direct computation shows that $\lov(f) \ge h_a(f).$ It follows that $\lov(f)=h_a(f).$  This finishes the proof.
\endproof

Dinh-Sibony \cite{Ds_upperbound_mero} defined the topological entropy of a meromorphic correspondence. Using this definition, one can see that Theorem \ref{th_main2phayG} still holds for meromorphic correspondences. 

Assume now $X \in \cali{G}.$ Let $\omega'$ be a Hermitian metric on $X$ for which $\ddc \omega'^j=0$ for $1 \le j \le k-1$.   This metric induces naturally a metric $\omega:= \pi_1^* \omega'+ \pi_2^* \omega'$ on $X \times X$ with $\ddc \omega^j=0$ on $\Delta$ for $1 \le j \le k-1$.    

Let $\widehat \omega_h$ be a Chern form of a Hermitian metric of $\mathcal{O}(-\widehat \Delta)$ whose restriction to  each fiber of the projection $\widehat \Delta \to \Delta$ is a strictly positive and belongs to the cohomology class of a hyperplane of that fiber.  By rescaling $\omega',$ we can assume that   $\widehat \omega:=  \sigma^* \omega+ \widehat \omega_h>0.$ By our choice of $\widehat \omega_h,$ the restriction of  $\widehat\omega^{k-1}$ to each fiber of the projection $\widehat \Delta \to \Delta$ is a volume form of mass $1.$

\begin{proposition} \label{pro_equidistrG} Let $X\in \cali{G},$ $\omega, \widehat \omega$ as above and $f$ a meromorphic self-correspondence of $X.$  Assume that there are a current $T_\infty$ and  a sequence $(A_n)_{n\in \N}$ of  strictly positive numbers such that the following conditions are satisfied:

$(i)$ $\widehat T_\infty \wedge [\widehat \Delta]$ is well-defined classically,

$(ii)$ $A_n^{-1}[\Gamma_n]$ converges to $T_\infty$,

$(iii)$
\begin{align}\label{ea-dkTinftyDelta} \langle \widehat T_\infty \wedge [\widehat \Delta], \widehat \omega^{k-1}\rangle=1, \quad \sigma_*(\widehat T_\infty \wedge [\widehat \Delta] \wedge \widehat \omega^{j})=0
\end{align}
for every $0\le j \le k-2.$ 

Then we have 
$$\tilde{P}_n= A_n +o(A_n)$$
as $n\to \infty.$  

Moreover the last asymptotic still holds without  Assumption $(i)$ provided that  $\omega$ is  K\"ahler and  Assumption $(iii)$ is replaced by  the condition that the mass of the total tangent class  $\kappa^{\Delta}(T_\infty)$ of $T_\infty$ along $T$ measured with respect to $\widehat \omega$ is $1$ and the h-dimension of $\kappa^{\Delta}(T_\infty)$ is $0.$  
\end{proposition}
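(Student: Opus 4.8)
The plan is to sandwich $\tilde P_n$ between two quantities, apply Lemma \ref{le_isolatedpoint} to $V_1:=\Gamma_n$ and $V:=\Delta$ inside $X\times X$ (note $\dim\Gamma_n=\dim\Delta=k$ are complementary in $X^2$, $\Gamma_n\not\subset\Delta$, and $[\Gamma_n]$ has no mass on $\Delta$), and then push the resulting currents through the semicontinuity results of Section \ref{sec_tangent}. Throughout write $T_n:=A_n^{-1}[\Gamma_n]$, $\widehat T_n:=A_n^{-1}[\widehat\Gamma_n]$ for the normalized strict transform, and choose the test form $\Phi:=\widehat\omega^{k-1}$, which is admissible in Lemma \ref{le_isolatedpoint} since by the choice of $\widehat\omega$ its restriction to each fibre of $\widehat\Delta\to\Delta$ is a volume form of mass $1$. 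With $q:=k$ and the notation $\widehat T_{n,j}:=\widehat T_n\wedge[\widehat\Delta]\wedge\widehat\omega^{k-j-1}$, $\widehat T_{\infty,j}:=\widehat T_\infty\wedge[\widehat\Delta]\wedge\widehat\omega^{k-j-1}$ of Proposition \ref{pro_semicontinuity2}, Lemma \ref{le_isolatedpoint}(ii) gives $\tilde P_n\le\|\sigma_*([\widehat\Gamma_n]\wedge[\widehat\Delta]\wedge\Phi)\|=A_n\,\langle\sigma_*\widehat T_{n,0},1\rangle$, while Lemma \ref{le_isolatedpoint}(iii) (valid because $\Delta\cong X$ is compact, with a constant $M$ independent of $n$) gives $\tilde P_n\ge A_n\,\langle\sigma_*\widehat T_{n,0},1\rangle-MA_n\sum_{j=1}^{k-1}\|\sigma_*\widehat T_{n,j}\|$, using that $\widehat T_n\wedge[\widehat\Delta]\wedge\widehat\omega^i=\widehat T_{n,k-1-i}$.

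Hence it suffices to prove $\langle\sigma_*\widehat T_{n,0},1\rangle\to 1$ and $\|\sigma_*\widehat T_{n,j}\|\to 0$ for $1\le j\le k-1$. This is where Proposition \ref{pro_semicontinuity2} enters, with ambient manifold $X^2$, submanifold $\Delta$, currents $T_n\to T_\infty$ (Assumption (ii)) and the metric $\omega$ fixed in the preamble. Its hypotheses hold: $T_n$ has no mass on $\Delta$ and $\supp T_n\cap\Delta\subset X^2$ is compact; $\widehat T_n\wedge[\widehat\Delta]$ is classically well-defined because the analytic set $\widehat\Gamma_n$ meets the hypersurface $\widehat\Delta$ properly, and $\widehat T_\infty\wedge[\widehat\Delta]$ is classically well-defined by Assumption (i); and $\ddc\omega^j=0$ on $\Delta$ for $1\le j\le k-1$ by the construction of $\omega$ from a metric on $X\in\cali{G}$ (this is a purely local statement near $\Delta$, so that $X^2\notin\cali{G}$ in general plays no role). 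Next, Assumption (iii) says exactly that $j_\infty:=$ the h-dimension of $\widehat T_\infty\wedge[\widehat\Delta]$ is $0$ and that $\langle\sigma_*\widehat T_{\infty,0},1\rangle=1$: the second equation of (\ref{ea-dkTinftyDelta}) is the normalization; the first, $\sigma_*\widehat T_{\infty,j}=0$ for $1\le j\le k-1$, expanded via $\widehat\omega=\sigma^*\omega+\widehat\omega_h$ and the projection formula, yields inductively $\sigma_*(\widehat T_\infty\wedge[\widehat\Delta]\wedge\widehat\omega_h^{\,i})=0$ for $0\le i\le k-2$, and since $\sigma^*\omega|_{\widehat\Delta}=\pi^*(\omega|_\Delta)$ a short binomial expansion shows this is incompatible with $j_\infty\ge 1$. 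With $j_\infty=0$ and $q+l=2k=\dim X^2+j_\infty$, part (iii) of Proposition \ref{pro_semicontinuity2} (with $j=0$) gives $\langle\sigma_*\widehat T_{n,0},1\rangle\to\langle\sigma_*\widehat T_{\infty,0},1\rangle=1$, and part (i) gives $\sigma_*\widehat T_{n,j}\to 0$, hence $\|\sigma_*\widehat T_{n,j}\|=\langle\sigma_*\widehat T_{n,j},\omega^j\rangle\to 0$, for every $1\le j\le k-1$. Combined with the two bounds above, $\tilde P_n/A_n\to 1$; since these limits hold along every subsequence, this is the desired $\tilde P_n=A_n+o(A_n)$.

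For the ``moreover'' assertion $\omega$ is K\"ahler, so $\widehat\omega=\sigma^*\omega+\widehat\omega_h$ is a K\"ahler form near $\widehat\Delta$ and all forms $\widehat\omega^j$ are closed; Assumption (i) is dropped. One repeats the argument with Proposition \ref{pro_continui-totaltangenclass} in place of Proposition \ref{pro_semicontinuity2}: its condition $q+l\ge\dim X^2+j_\infty$ reads $j_\infty=0$, which is precisely the hypothesis that the h-dimension of $\kappa^{\Delta}(T_\infty)$ is $0$, so one gets $A_n^{-1}\kappa^{\Delta}([\Gamma_n])\to\kappa^{\Delta}(T_\infty)$ in the cohomology of $\widehat\Delta\approx\P(E)$. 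By Theorem \ref{th_V1alongV} the tangent current of $[\Gamma_n]$ along $\Delta$ is $\pi_\infty^*([\widehat\Gamma_n]\wedge[\widehat\Delta])$, whose class on $\widehat\Delta$ is $\kappa^{\Delta}([\Gamma_n])$; pairing with the closed forms $\widehat\omega^{k-1}$ and $\widehat\omega^{k-1-j}\wedge(\sigma^*\omega)^{j}$ (for $j\ge 1$) turns $\langle\sigma_*\widehat T_{n,0},1\rangle$ and $\|\sigma_*\widehat T_{n,j}\|$ into cup-products converging to the corresponding cup-products of $\kappa^{\Delta}(T_\infty)$, which equal $1$ (the normalization of the mass with respect to $\widehat\omega$) and $0$ (because $\kappa^{\Delta}(T_\infty)\wedge\pi^*\{\omega|_\Delta\}=0$ by vanishing of the h-dimension) respectively. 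The same sandwich from Lemma \ref{le_isolatedpoint} then gives $\tilde P_n=A_n+o(A_n)$.

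The main obstacle is not the sandwiching but making the correction term $M\sum_{j\ge1}\|\sigma_*\widehat T_{n,j}\|$ of Lemma \ref{le_isolatedpoint}(iii) — which accounts for the positive-dimensional (non-isolated, non-exotic) components of $\widehat\Gamma_n\cap\widehat\Delta$ — negligible after normalization; this is exactly what the h-dimension-zero hypothesis buys, once propagated from $T_\infty$ to the currents $[\Gamma_n]$ by the semicontinuity of Proposition \ref{pro_semicontinuity2} (respectively of total tangent classes, Proposition \ref{pro_continui-totaltangenclass}). The two points that require care are checking that (\ref{ea-dkTinftyDelta}) is the correct reformulation of ``$j_\infty=0$ with normalized mass $1$'', and that Proposition \ref{pro_semicontinuity2} is genuinely applicable over $X^2$ even though $X^2\notin\cali{G}$ in general.
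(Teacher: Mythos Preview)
Your argument is correct and follows essentially the same route as the paper's proof: sandwich $\tilde P_n$ via Lemma \ref{le_isolatedpoint}(ii)--(iii), then feed the normalized currents $A_n^{-1}[\Gamma_n]\to T_\infty$ into Proposition \ref{pro_semicontinuity2} (respectively Proposition \ref{pro_continui-totaltangenclass} in the K\"ahler case) with $j_\infty=0$, so that the main term converges to $1$ and the correction terms vanish. Your write-up is in fact more explicit than the paper's on two points the paper leaves implicit --- the verification that the vanishing in (\ref{ea-dkTinftyDelta}) forces $j_\infty=0$, and the check that the $\ddc$-closedness needed in Proposition \ref{pro_semicontinuity2} is only required on $\Delta$; one trivial slip is that you interchange ``first'' and ``second'' when referring to the two equalities in (\ref{ea-dkTinftyDelta}).
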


\begin{proof} Assumption $(iii)$ tells us that the h-dimension of $\widehat T_\infty \wedge [\widehat \Delta]$ is $j_\infty:=0.$ Since $\dim \Gamma_n+ \dim \Delta= 2k= 2k + j_\infty,$  applying Proposition \ref{pro_semicontinuity2} to $T_n:= A_n^{-1} [\Gamma_n], T_\infty,$ $V:= \Delta$ and $X^2,$ we obtain 
\begin{align} \label{conver_TVgamman}
\lim_{n\to \infty}\big\|\sigma_* (\widehat T_n \wedge [\widehat \Delta] \wedge \widehat \omega^{k-1}) \big\|= \big\|\sigma_*\big(\widehat T_\infty \wedge [\widehat \Delta] \wedge \widehat \omega^{k-1}\big)\big\|
\end{align}
for $0 \le  j\le k-1$. This combined with (\ref{ine_MKisolate}) of Lemma \ref{le_isolatedpoint} implies 
\begin{align}\label{ine_MKisolate2}
\limsup_{n\to \infty} A_n^{-1}  \tilde{P}_n \le   \big\| \sigma_*\big(\widehat T_\infty \wedge [\widehat \Delta] \wedge \widehat \omega^{k-1}\big)\big\|=\langle \widehat T_\infty \wedge [\widehat \Delta], \widehat \omega^{k-1}\rangle=1
\end{align}
by the first equality of  (\ref{ea-dkTinftyDelta}).  The second equality of (\ref{ea-dkTinftyDelta}) together with Property $(iii)$ of Lemma  \ref{le_isolatedpoint} applied to $V_1= \Gamma_n,$ $V=\Delta$ yields 
$$\liminf_{n\to\infty}A_n^{-1} \tilde{P}_n \ge 1.$$
We conclude that $\tilde{P}_n= A_n+ o(A_n).$    When $\omega$ is K\"ahler, similar arguments give the desired assertion by using Proposition \ref{pro_continui-totaltangenclass}  instead of Proposition \ref{pro_semicontinuity2}.  This finishes the proof.
\end{proof}

We recall here the following result from \cite{Vu_nonkahler_topo_degree} which is stated for meromorphic self-map but its proof is extended obviously to the case of self-correspondence. 

\begin{theorem} \label{th_dominant} Let $X$ be a compact complex manifold of dimension $k$ and $f$ a meromorphic self-correspondence of $X$. Let $\nu$ be a complex measure with $L^{k+1}$ density on $X$ and $\nu(X)=1.$  Assume that $d_k> d_{k-1}.$ Then the sequence $d_k^{-n}(f^n)^* \nu$ converges to  an invariant $PC$ probability measure $\mu_f$ of entropy $\ge \log d_k$ independent of $\nu$ such that  $d_k^{-1} f^* \mu_f= \mu_f$  and   
\begin{align} \label{conver_psh}
\lim_{n \to \infty} \langle d_k^{-n}(f^n)^* \nu - \mu_f, \varphi \rangle =0
\end{align}
for every quasi-p.s.h. function $\varphi$ on $X.$
\end{theorem}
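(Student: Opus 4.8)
The plan is to adapt the $\ddc$-method of Dinh--Sibony for maps of dominant topological degree, replacing the $\partial\bar\partial$-lemma that is unavailable on $X$ by the use of a Gauduchon metric together with a Green's function estimate. We may assume $X$ connected. Fix a Gauduchon metric $\omega$ on $X$ (so $\ddc\omega^{k-1}=0$) and write $\Lambda_n:=d_k^{-n}(f^n)^*$ for the action on probability measures and $L_n:=d_k^{-n}(f^n)_*$ for the action on quasi-p.s.h.\ (or merely continuous) functions, so that $\langle\Lambda_n\nu,\varphi\rangle=\langle\nu,L_n\varphi\rangle$. First I would check, via the theory of pull-back of currents by meromorphic correspondences and the hypothesis that $\nu$ has density in $L^{k+1}$ (using that quasi-p.s.h.\ functions lie in $L^p$ for every finite $p$), that each $\Lambda_n\nu$ is a well-defined probability measure --- its total mass is $d_k^{-n}\int_{\Gamma_n}\pi_2^*\nu=d_k^{-n}d_k^n=1$ because $d_k(f^n)=d_k(f)^n$ --- and that the pairing $\langle\nu,L_n\varphi\rangle$ makes sense for quasi-p.s.h.\ $\varphi$. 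The theorem is then reduced to the single assertion: \emph{for every quasi-p.s.h.\ $\varphi$, the functions $L_n\varphi$ converge in $L^1$ to a constant $c(\varphi)$ and are uniformly bounded in every $L^p$, $p<\infty$}. Granting this, $\mu_f$ is defined by $\langle\mu_f,\varphi\rangle:=c(\varphi)$, it is independent of $\nu$, and $\langle\Lambda_n\nu,\varphi\rangle=\langle\nu,L_n\varphi\rangle\to c(\varphi)$ follows by uniform integrability against the $L^{k+1}$ density of $\nu$.

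The crux is this convergence. Normalizing so that $\ddc\varphi\ge-\omega$ and $\sup_X\varphi=0$, we get $L_n\varphi\le 0$ (since $d_k^{-n}(f^n)_*$ averages $\varphi$ over the $d_k^n$-point generic fibres of $\pi_2|_{\Gamma_n}$) and $\ddc(L_n\varphi)\ge-\theta_n$, where $\theta_n:=d_k^{-n}(f^n)_*\omega\ge 0$ has mass $\|\theta_n\|\lesssim d_k^{-n}\int_X(f^n)^*\omega^{k-1}\wedge\omega\lesssim\big((d_{k-1}+\epsilon)/d_k\big)^n\to 0$ by the projection formula, the definition of $d_{k-1}$ (cf.\ also Lemma~\ref{le_spectralgap}), and the hypothesis $d_k>d_{k-1}$. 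The signed measure $\ddc(L_n\varphi)\wedge\omega^{k-1}$ has total integral $\int_X L_n\varphi\,\ddc\omega^{k-1}=0$ (here the Gauduchon condition is used) and its negative part is $\le\theta_n\wedge\omega^{k-1}$, so its total variation is $\lesssim\|\theta_n\|$; feeding this into the Green's function of the elliptic operator $v\mapsto\ddc v\wedge\omega^{k-1}$ yields $\|L_n\varphi-c_n\|_{L^1}\lesssim\|\theta_n\|$, where $c_n:=\big(\int_X\omega^k\big)^{-1}\int_X L_n\varphi\,\omega^k$. Hence $|c_{n+1}-c_n|\le\|L_{n+1}\varphi-c_{n+1}\|_{L^1}+\|L_1(L_n\varphi-c_n)\|_{L^1}\lesssim\|\theta_{n+1}\|+\|\theta_n\|$ is summable, so $(c_n)$ converges to $c(\varphi)$ and $L_n\varphi\to c(\varphi)$ in $L^1$ at a geometric rate; the uniform $L^p$-bounds follow from the standard integrability estimates for functions that are bounded above with vanishing $\ddc$-defect $\theta_n$.

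The remaining statements are then soft. The uniform $L^p$-bounds and the continuity of $\varphi\mapsto c(\varphi)$ under decreasing and uniform limits of quasi-p.s.h.\ functions show that every quasi-p.s.h.\ function is $\mu_f$-integrable and that $\mu_f$ acts continuously on such functions, i.e.\ $\mu_f$ is PC; in particular it puts no mass on pluripolar sets. For the invariance, $\langle d_k^{-1}f^*\mu_f,\varphi\rangle=\langle\mu_f,d_k^{-1}f_*\varphi\rangle=\lim_n\langle\nu,L_{n+1}\varphi\rangle=\langle\mu_f,\varphi\rangle$, first for quasi-p.s.h.\ $\varphi$ (noting $d_k^{-1}f_*\varphi$ is again bounded above with small negative $\ddc$-part, so the previous step applies) and then for all continuous $\varphi$ by density. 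Finally, once $\mu_f$ is known to be PC and to be the limit of the normalized pull-backs, the bound $h(\mu_f)\ge\log d_k$ follows from the standard lower estimate for the entropy of equilibrium-type measures (Gromov--Yomdin volume bounds together with the arguments of de~Th\'elin and Dinh), which uses only the degree growth $d_k(f^n)=d_k^n$ and the absence of pluripolar mass, and so carries over unchanged to the non-K\"ahler setting.

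The main obstacle is the second paragraph together with the well-definedness claim of the first: one must run the entire contraction estimate --- decay of $\|\theta_n\|$ via the projection formula, vanishing of $\int_X\ddc(L_n\varphi)\wedge\omega^{k-1}$ via the Gauduchon condition, and the Green's function inequality --- using nothing but a Gauduchon metric, in place of the super-potential and cohomology-class machinery available when $X$ is K\"ahler; and one must separately establish, by potential-theoretic estimates, that the iterated pull-backs $d_k^{-n}(f^n)^*\nu$ of an $L^{k+1}$-density measure remain genuine probability measures carrying the uniform bounds that the argument consumes --- this is exactly where the $L^{k+1}$ hypothesis is indispensable.
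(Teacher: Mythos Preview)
The paper does not prove this theorem: it is quoted verbatim from \cite{Vu_nonkahler_topo_degree} with the remark that the argument there ``is extended obviously to the case of self-correspondence.'' So there is no in-paper proof to compare your proposal against.

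That said, your outline is exactly the strategy of the cited reference: replace the $\partial\bar\partial$-lemma by a Gauduchon metric, show that $L_n\varphi:=d_k^{-n}(f^n)_*\varphi$ is $\theta_n$-psh with $\|\theta_n\|\lesssim\big((d_{k-1}+\epsilon)/d_k\big)^n$, use the Gauduchon condition $\ddc\omega^{k-1}=0$ to get $\int_X\ddc(L_n\varphi)\wedge\omega^{k-1}=0$, and then invoke the Green kernel of the Gauduchon Laplacian to force $\|L_n\varphi-c_n\|_{L^1}\lesssim\|\theta_n\|$. The $L^{k+1}$ hypothesis on $\nu$ is used exactly where you indicate, to pair against the uniform $L^p$ bounds on $L_n\varphi$. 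The entropy lower bound via de Th\'elin's argument is also the correct route in the non-K\"ahler setting.

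One technical point to be careful with: your Cauchy estimate $|c_{n+1}-c_n|\le\|L_{n+1}\varphi-c_{n+1}\|_{L^1}+\|L_1(L_n\varphi-c_n)\|_{L^1}$ tacitly assumes $L_1$ is bounded on $L^1$, which is not automatic (the Jacobian of $f$ need not be bounded). The cleaner way is to note that $L_1(L_n\varphi-c_n)=L_{n+1}\varphi-c_n$ is $\theta_{n+1}$-psh with $\sup\le\sup(L_n\varphi-c_n)\lesssim\|\theta_n\|$ (by the Green-function bound already obtained), and then apply the same Green estimate directly to this function to control its mean; this avoids any appeal to $L^1$-boundedness of $L_1$.
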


The following result implies Theorem \ref{the_equidistr} in Introduction. 

\begin{theorem} \label{the_equidistrG} Let $X$ be a compact complex manifold and $f$ a meromorphic self-correspondence of $X.$   Assume that one of the following situations occurs:

$(i)$  $X\in \cali{G}$ and $f$ has a dominant topological degree,

$(ii)$ $X$ is K\"ahler and $f$ is a holomorphic self-correspondence of $X$ such that $f^{-1}$ is also a holomorphic correspondence and $f$ has a simple action on the cohomology groups,

$(iii)$ $X$ is a compact K\"ahler surface and $f$ is an algebraically stable dominant meromorphic self-map of $X$ with a  minor (small) topological degree.

Then  we have 
\begin{align}\label{eq-Pnngafpeirodic}
 \tilde{P}_n  =  e^{n h_a(f)}+ o(e^{n h_a(f)}).
\end{align}
\end{theorem}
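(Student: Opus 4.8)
The goal is to verify, in each of the three listed situations, the hypotheses of Proposition~\ref{pro_equidistrG} for a suitable normalizing sequence $A_n$ and a suitable limit current $T_\infty$, and then to identify $\lim_n A_n^{1/n}$ with $e^{h_a(f)}$. Throughout, write $\omega$ for the metric on $X^2$ induced from a metric $\omega'$ on $X$ of the appropriate type (Gauduchon/pluriclosed powers when $X\in\cali{G}$, K\"ahler in cases $(ii)$ and $(iii)$), and $\widehat\omega$, $\widehat\omega_h$ as fixed just before Proposition~\ref{pro_equidistrG}. The number $A_n$ should be (comparable to) the mass of $[\Gamma_n]$ paired with $\omega^k$, equivalently $\max_q\int_X(f^n)^*\omega'^q\wedge\omega'^{k-q}$; by the definition of $h_a(f)$ and the submultiplicativity established in the proof of Theorem~\ref{th_main2phayG} (in the surface/K\"ahler cases the $\limsup$ is a genuine limit after normalization), one has $A_n^{1/n}\to e^{h_a(f)}$. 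So the whole content is: extract a convergent subsequence $A_n^{-1}[\Gamma_n]\to T_\infty$, check $(i)$–$(iii)$ of Proposition~\ref{pro_equidistrG}, conclude $\widetilde P_n=A_n+o(A_n)$ along that subsequence, and then upgrade to the full limit by showing the limit current (hence the asymptotic constant) is forced, so no subsequence is actually needed.

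\textbf{Situation $(i)$.} Here $d_k>d_{k-1}$, so Lemma~\ref{le_spectralgap} gives $\limsup_n\|(f^n)^*\Phi\|^{1/n}\le\sqrt{d_pd_q}<d_k$ for every smooth form of bidegree $(p,q)$ with $(p,q)\ne(k,k)$. This means that after normalizing by $d_k^n$, only the top-bidegree part of $[\Gamma_n]$ survives: $d_k^{-n}[\Gamma_n]\to T_\infty$ where $T_\infty=\pi_1^*\mu_f$ is built from the equilibrium measure $\mu_f$ of Theorem~\ref{th_dominant} (more precisely, $T_\infty$ is supported over the diagonal direction corresponding to the dominant eigendirection; one checks $\int_X(f^n)^*\omega'^q\wedge\omega'^{k-q}=o(d_k^n)$ for $q<k$ and $=d_k^n(1+o(1))$ for $q=k$, so $A_n\sim d_k^n$ and $h_a(f)=\log d_k$). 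The tangent current along $\Delta$ of such a current is concentrated over $\Delta$ as a pull-back from $H_\infty$ by a bidegree count, giving $j_\infty=0$; one then verifies $(iii)$ of Proposition~\ref{pro_equidistrG} by a mass computation using $\ddc\omega^j=0$ on $\Delta$ and Stokes, exactly as in the proof of Theorem~\ref{th_main2phayG}. Hypothesis $(i)$ of the Proposition (classical well-definedness of $\widehat T_\infty\wedge[\widehat\Delta]$) holds because $T_\infty$ has bounded local potentials coming from the continuity in Theorem~\ref{th_dominant}, or can be arranged by Theorem~\ref{th_Hhold} since $\ddc\omega^j=0$ on $\Delta$ for $1\le j\le k-1$.

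\textbf{Situations $(ii)$ and $(iii)$.} Now $X$ is K\"ahler, so we may work with cohomology classes and invoke Proposition~\ref{pro_continui-totaltangenclass} rather than Proposition~\ref{pro_semicontinuity2}; this is the "moreover" branch of Proposition~\ref{pro_equidistrG}, which does not require Hypothesis~$(i)$. The key input, \emph{already established in} \cite{DNV,DVT_growth_periodic} and recalled in the discussion after Theorem~\ref{the_equidistr}, is the existence of a limit $A_n^{-1}[\Gamma_n]\to T_\infty$ with $A_n^{1/n}\to e^{h_a(f)}$ and with the total tangent class $\kappa^\Delta(T_\infty)$ of h-dimension $0$ and mass $1$ (this uses, in case $(ii)$, the simple action on cohomology to pin down $A_n=\|(f^n)^*\omega'^{q_0}\wedge\omega'^{k-q_0}\|$ for the maximal $q_0$ and to control lower-degree contributions via Lemma~\ref{le_spectralgap}; in case $(iii)$, algebraic stability plus small topological degree gives the corresponding statement on surfaces). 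Granting that, Proposition~\ref{pro_equidistrG} applies verbatim and yields $\widetilde P_n=A_n+o(A_n)$, which is \eqref{eq-Pnngafpeirodic}. In case $(ii)$ one additionally needs that $f^{-1}$ is a holomorphic correspondence, which is used (via \cite[Le.~4.7]{DNV}) to guarantee that $\Gamma_n\cap\Delta$ has no component of too large dimension, so that the "exotic" components are exactly the $(k-1)$-dimensional fibers appearing in Definition~\ref{def-exotic-periodic}.

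\textbf{Main obstacle.} The routine part is the bidegree/Stokes bookkeeping verifying \eqref{ea-dkTinftyDelta}; the delicate part is the convergence $A_n^{-1}[\Gamma_n]\to T_\infty$ \emph{together with} the h-dimension-zero and unit-mass properties of the limiting (total) tangent class. In situation $(i)$ this is powered by the spectral gap $d_k>d_{k-1}$ and Theorem~\ref{th_dominant}; in situations $(ii)$ and $(iii)$ it is the technical heart of \cite{DNV} and \cite{DVT_growth_periodic} respectively, which we may cite. So the real work in this paper's contribution is not re-proving those convergences but checking that they feed correctly into Proposition~\ref{pro_equidistrG} and that the bound on exotic points from Lemma~\ref{le_isolatedpoint}(iii) combines with Proposition~\ref{pro_semicontinuity2}/\ref{pro_continui-totaltangenclass} to turn the known upper bound $P_n\le e^{nh_a(f)}+o(e^{nh_a(f)})$ into the exact asymptotic for $\widetilde P_n$.
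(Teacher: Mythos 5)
Your overall route is the same as the paper's: identify the limit $A_n^{-1}[\Gamma_n]\to T_\infty$ in each situation (Lemma \ref{le_spectralgap} plus Theorem \ref{th_dominant} when the topological degree dominates; the cited convergence results of \cite{DNV} and \cite{DVT_growth_periodic} in the K\"ahler cases) and feed it into Proposition \ref{pro_equidistrG}. There is, however, one step whose justification as written would not go through, namely hypothesis $(i)$ of Proposition \ref{pro_equidistrG} in situation $(i)$. The current $T_\infty=\pi_1^*\mu_f$ has bidegree $(k,k)$ on $X^2$, so ``bounded local potentials'' is not a property it can have, and Theorem \ref{th_Hhold} only delivers Hypothesis \textbf{(H)} (uniform mass bounds for $(A_\lambda)_*\widehat\tau_*\widehat T$), which is not the same as the classical well-definedness of $\widehat T_\infty\wedge[\widehat\Delta]$. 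The paper's actual argument is that $\widehat T_\infty=\Pi_1^*\mu_f$ with $\Pi_1=\pi_1\circ\sigma$ a submersion whose fibers meet $\widehat\Delta$ in a hypersurface (in the coordinates $(x,y_1',v_2,\dots,v_k)$ one has $\widehat\Delta=\{y_1'=0\}$ and the fiber over $x$ is parameterized by $(y_1',v_2,\dots,v_k)$), so the logarithmic potential of $[\widehat\Delta]$ is integrable fiberwise and hence against $\Pi_1^*\mu_f$; the identities (\ref{ea-dkTinftyDelta}) are then obtained by direct fiber integration ($\int_{\sigma^{-1}\{(x,x)\}}\widehat\omega^{k-1}=\int\widehat\omega_h^{k-1}=1$), not by a Stokes argument ``as in Theorem \ref{th_main2phayG}'' as you indicate.

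A secondary, non-fatal deviation: in situation $(ii)$ you invoke only the cohomological ``moreover'' branch of Proposition \ref{pro_equidistrG}, whereas the paper verifies the classical well-definedness of $\widehat T_\infty\wedge[\widehat\Delta]$ by showing $\widehat T_\infty=\Pi_1^*T^+\wedge\Pi_2^*T^-$ via continuity of super-potentials (such currents charge no pluripolar set, hence put no mass on $\widehat\Delta$) and then identifies $\widehat T_\infty\wedge[\widehat\Delta]=(\sigma|_{\widehat\Delta})^*(T^+\wedge T^-)$, which has mass $1$ and h-dimension $0$. Your shortcut is legitimate --- the paper itself remarks that looking at cohomology classes suffices --- but you must still actually establish that $\kappa^{\Delta}(T^+\otimes T^-)$ has h-dimension $0$ and mass $1$; in case $(ii)$ that is precisely the super-potential computation you are eliding, rather than a statement available ready-made in \cite{DNV} (in case $(iii)$ it is indeed quoted from \cite{DVT_growth_periodic}, as the paper does).
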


Recall that if $f$ is a holomorphic self-correspondence of a compact K\"ahler manifold $X$, then  by \cite{DNV}, the action $f^*$ of $f$ on the cohomology is well-defined and $(f^n)^*=(f^*)^n$ for every $n.$ In this case, the dynamical degree $d_q$ of $f$ is thus equal to the spectral radius of $f^*|_{H^{q,q}(X)}$ and  we say that $f$ has   \emph{a simple action on the cohomology groups}  if   the maximal dynamical degree $d_p(f)$ of $f$ is a simple eigenvalue of the action $f^*$ on $H^{p,p}(X)$ and is the only eigenvalue of $f^*$ on $H^{p,p}(X)$ of modulus $d_p$ and for every $q\not =p$ we have $d_q(f)<d_p(f).$ Notice that by \cite{Diller-Farve}, every map in the situation $(iii)$ above also satisfies the last property.

\proof  
In order to prove (\ref{eq-Pnngafpeirodic}), we only need to check the assumption in Proposition \ref{pro_equidistrG}. Consider first the case where $f$ has a dominant topological degree and $X \in \cali{G}.$   Recall that smooth forms on $X^2$ can be approximated by forms $\pi_1^* \Phi_1 \wedge \pi_2^* \Phi_2$ in $\cali{C}^\infty$-topology, where $\Phi_1, \Phi_2$ are smooth forms on $X.$  By Lemma \ref{le_spectralgap}, we see that for smooth $(k-p,k-q)$-form $\Phi_1$ and $(p,q)$-form $\Phi_2$ on $X,$  
\begin{align} \label{limit_tinhpdGamma}
\big \langle d_k^{-n}[\Gamma_n], \pi_1^* \Phi_1 \wedge \pi_2^* \Phi_2 \big \rangle= d_k^{-n}\int_X (f^n)^* \Phi_2 \wedge \Phi_1 =O((d_p d_q)^{n/2}d_k^{-n}) \to 0
\end{align}
if $(p,q)\not = (k,k)$ because $d_k > d_p$ or $d_q$ in this case.  Let $\mu_f$ be the equilibrium measure of $f.$      On the other hand, if $(p,q)=(k,k),$ we have 
$$\big \langle d_k^{-n}[\Gamma_n], \pi_1^* \Phi_1 \wedge \pi_2^* \Phi_2 \big \rangle= \big \langle d_k^{-n}(f^n)^* \Phi_2, \Phi_1 \big \rangle    \to  \langle \mu_f, \Phi_1 \rangle \int_X \Phi_2= \langle \pi_1^* \mu_f, \pi_1^* \Phi_1 \wedge \pi_2^* \Phi_2 \rangle$$
by Theorem \ref{th_dominant}. Using this and (\ref{limit_tinhpdGamma}) gives 
$$d_k^{-n}[\Gamma_n] \to T_\infty:=\pi_1^* \mu_f.$$
Put $A_n:= d_k^{n}.$    Denote by $\Pi_j: \widehat{X\times  X} \to X$ the composition of $\sigma$ and $\pi_j$ for $j=1,2.$ Observe that $\Pi_1, \Pi_2$ are  submersions. Consider local coordinates $x=(x_1, \ldots,x_k)$ on $X.$ These coordinates induce a natural coordinate system $(x,y)$ on $X^2.$ The diagonal $\Delta$ is given by $x-y=0.$ Put $y':= x-y.$ We obtain new local coordinates $(x,y').$ A typical local chart on $\widehat{X\times X}$ can be described as $(x, y'_1, v_2, \ldots, v_k)$ and 
$$\sigma(x,y'_1, v_2, \ldots, v_k)= (x, y'_1 v_2, \ldots, y'_1 v_k), \quad \widehat \Delta= \{y'_1=0\}$$
We deduce that the fiber of $\Pi_1$ in the considered local chart is parameterized by $y'_1,$ $v_2, \ldots, v_k.$ Since $\widehat T_\infty=\sigma^* \pi_1^* \mu_f= \Pi_1^* \mu_f,$ we can check easily that $\widehat T_\infty\wedge [\widehat \Delta]$ is well-defined classically and  
\begin{align}\label{eqtinhTinftyomega}
\langle \widehat T_\infty \wedge [\widehat \Delta],   \widehat \omega^{k-1} \rangle &= \int_{x \in X} d\mu_f \int_{(y'_1,v_2,\ldots, v_k) \in \Pi_1^{-1}(x)}  [\widehat \Delta] \wedge \widehat \omega^{k-1}  \\
\nonumber
&=  \int_{x \in X} d\mu_f \int_{\sigma^{-1}\{(x,x)\}} \widehat \omega^{k-1}\\
\nonumber
&=\int_{x \in X} d\mu_f \int_{\sigma^{-1}\{(x,x)\}} \widehat \omega_h^{k-1} =\int_{x \in X} d\mu_f=1.    
\end{align}
Using similar computations as in (\ref{eqtinhTinftyomega}) gives  
$$\langle \widehat T_\infty \wedge [\widehat \Delta],   \widehat \omega^{j}\wedge \sigma^* \omega^{k-1-j} \rangle=0$$
for every $0\le j \le k-2.$ 

Consider now the case where $f$ has a simple action on the cohomology groups and $X$ is K\"ahler.    It was shown in \cite{DNV} that $T_n:=d_p^{-n}[\Gamma_n]$ converges to $T_\infty:=T^+\otimes T^{-},$ where $T^+, T^{-}$ are closed positive currents of bi-degree $(p,p)$ and $(k-p,k-p)$ respectively such that their super-potentials are continuous and $T^+ \wedge T^{-}$ is of mass $1$. We deduce that $\Pi_1^* T^+, \Pi_2^* T^{-}$ also have continuous super-potentials, see \cite[Le. 2.2]{DNV}. Thus the current $\Pi_1^* T^+ \wedge \Pi_2^* T^{-}$ is well-defined and has a continuous super-potential (cf. \cite[Pro. 3.3.3]{DS_superpotential}). Recall that a closed positive current with continuous super-potential has no mass on pluripolar set. It follows that  $\Pi_1^* T^+ \wedge \Pi_2^* T^{-}$ has no mass on $\widehat \Delta.$ Outside $\widehat \Delta$ the current $\Pi_1^* T^+ \wedge \Pi_2^* T^{-}$ is equal to $\widehat T_\infty.$ We then obtain that $\widehat T_\infty= \Pi_1^* T^+ \wedge \Pi_2^* T^{-}.$ Consequently, $\widehat T_\infty\wedge [\widehat \Delta]$ is well-defined classically. By regularizing $T^+, T^{-}$ in the SP-convergence (cf. \cite[Pro. 3.2.8]{DS_superpotential}), we get
$$\widehat T_\infty \wedge [\widehat \Delta]= (\sigma|_{\widehat \Delta})^*\big((T^+ \otimes T^{-}) \wedge \Delta\big)=(\sigma|_{\widehat \Delta})^*(T^+ \wedge T^{-}).$$
The first equality of (\ref{ea-dkTinftyDelta}) then follows. The second one is checked similarly. In fact, the above equality is more than needed to obtain  (\ref{ea-dkTinftyDelta}). One only needs to look at their cohomology classes: $\sigma_*([\widehat \Delta] \wedge \omega^{k-1})$ belongs to the class of $[\Delta]$ in $X^2$,  hence $ \langle \widehat T_\infty \wedge [\widehat \Delta], \widehat \omega^{k-1}\rangle$ is equal to the cup product of $\{T_\infty\}$ and $\{\Delta\}$ which is the mass of $T^+ \wedge T^{-}.$   Thus (\ref{eq-Pnngafpeirodic}) is proved in this case. 

It remains to treat the case of surfaces. Results from \cite{Diller-Farve,Diller-Guedj-DujardinII,DVT_growth_periodic} show that there exist closed positive currents $T^+, T^-$ on $X$ such that $d_1^{-n} [\Gamma_n]$ converges to $T^+ \otimes T^-$ and $T^+$ has no mass on proper analytic subsets of $X,$ see \cite[Pro. 3.3]{DVT_growth_periodic}. Moreover, by \cite[Cor. 2.4]{DVT_growth_periodic}, the h-dimension of the total tangent class of  $T^+ \otimes T^-$ along $\Delta$ is $0.$ Thus we can apply  Proposition \ref{pro_equidistrG} to get the desired assertion in this case. This finishes the proof.
\endproof

Now we give some more dynamical properties of meromorphic self-maps on manifolds in $\cali{G}.$ 

\begin{corollary} \label{cor_khifsurjective}  Let $X \in \cali{G}$ and $f$ a surjective holomorphic self-map of $X$ with dominant topological degree. Then the following properties hold: 

$(i)$ every Lyapunov exponent of the equilibrium measure $\mu_f$ of $f$ is at least $\frac{1}{2}\log(d_k/d_{k-1}),$ 

$(ii)$ the isolated periodic points of $f$ is equidistributed with respect to $\mu_f:$
\begin{align}\label{eq_limitmuperiodic}
\mu_n:= \frac{1}{P_n} \sum_{x} \nu_x \delta_x \to \mu_f,
\end{align}
where the sum is taken over all isolated periodic points of $f$ of period $n$, $\nu_x$ is the multiplicity of $x$ and $\delta_x$ is the Dirac mass at $x,$

$(iii)$ there exists a totally invariant (possibly empty) proper analytic subset $\mathcal{E}$ of $f,$ \emph{i.e,} $f^{-1}(\mathcal{E})=\mathcal{E}$ such that  $d_k^{-n}(f^n)^*\delta_a \to \mu_f$  if and only  if $a \not \in \mathcal{E}.$   
\end{corollary}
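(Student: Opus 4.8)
The plan is to derive Corollary~\ref{cor_khifsurjective} from Theorem~\ref{th_dominant}, Theorem~\ref{the_equidistrG}(i), and the structure of tangent/density currents established in Sections~\ref{sec_tangent}--\ref{sec_analy}, combined with the now-standard machinery of Dinh--Sibony (Lyapunov exponents, equidistribution, exceptional sets) in the dominant topological degree setting. For part $(i)$, I would follow the argument of \cite{VA-lyapunov,DNT_equi}: the sum of the Lyapunov exponents of $\mu_f$ controls the asymptotic contraction rate $\lim_n \frac1n \log \|Df^n\|$ along $\mu_f$-generic orbits, while the spectral gap $d_k > d_{k-1}$ forces a lower bound $\frac12\log(d_k/d_{k-1})$ on each individual exponent. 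The key input is the inequality $\limsup_n \|(f^n)_*\Phi\|^{1/n} \le \sqrt{d_{k-p}d_{k-q}}$ from Lemma~\ref{le_spectralgap}, applied with $\Phi$ of bidegree $(1,1)$; together with the fact that $\mu_f$ is a $PC$ measure (Theorem~\ref{th_dominant}) so that the relevant integrals converge, this gives that the pullback of a generic smooth $(1,1)$-form by $f^n$ grows at rate at most $\sqrt{d_{k-1}d_k} \cdot d_k^{-n}$ after normalization, which translates into the stated lower bound on exponents via the Oseledec theorem and the cocycle $\log\|\wedge^j Df\|$.

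For part $(ii)$, I would invoke Theorem~\ref{the_equidistrG}(i): in the dominant topological degree case on $X\in\Gc$, that theorem's proof shows $d_k^{-n}[\Gamma_n] \to T_\infty = \pi_1^*\mu_f$, that $\widehat T_\infty \wedge [\widehat\Delta]$ is classically well-defined, and that $\sigma_*(\widehat T_\infty\wedge[\widehat\Delta]\wedge\widehat\omega^j)=0$ for $0\le j\le k-2$ while $\langle \widehat T_\infty \wedge [\widehat\Delta], \widehat\omega^{k-1}\rangle = 1$. The first step is to note that in this dominant-degree situation there are no exotic periodic points contributing to leading order: the computation \eqref{eqtinhTinftyomega} in fact identifies $\sigma_*(\widehat T_\infty\wedge[\widehat\Delta]\wedge\widehat\omega^{k-1})$ with $\mu_f$ itself, not merely with a measure of the same total mass. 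Then Lemma~\ref{le_isolatedpoint}(ii) gives $\sum_{x \text{ isolated or exotic}} \nu_x\delta_x \le \sigma_*([\widehat\Gamma_n]\wedge[\widehat\Delta]\wedge\Phi)$ for a suitable fibrewise-mass-one form $\Phi$, and the semicontinuity Proposition~\ref{pro_semicontinuity2} applied to $T_n := d_k^{-n}[\Gamma_n]$ upgrades the convergence $d_k^{-n}\sigma_*([\widehat\Gamma_n]\wedge[\widehat\Delta]\wedge\widehat\omega^{k-1}) \to \mu_f$ to hold in mass. Combining the upper bound $\limsup_n d_k^{-n}\mu_n(X)\cdot\|\mu_n\| \le 1$ with the matching lower bound $\tilde P_n \ge d_k^n + o(d_k^n)$ from Lemma~\ref{le_isolatedpoint}(iii) and Theorem~\ref{the_equidistrG}, one gets $\mu_n \to \mu_f$ after renormalization; here one uses that any weak limit of $\mu_n$ is dominated by $\mu_f$ (by the mass convergence and the inequality of Lemma~\ref{le_isolatedpoint}(ii)) and has the same total mass $1$, hence equals $\mu_f$.

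For part $(iii)$, I would construct $\mathcal{E}$ as the set of points $a$ where $d_k^{-n}(f^n)^*\delta_a$ fails to converge to $\mu_f$, following the pluripotential-theoretic argument of \cite{DS_book,Vu_nonkahler_topo_degree,DNT_equi}: one shows $\mathcal{E}$ is a \emph{proper analytic} subset using that $d_k^{-n}(f^n)^*\delta_a \to \mu_f$ for $a$ outside a pluripolar set (a consequence of Theorem~\ref{th_dominant} applied along with a Brolin--Lyubich--type equicontinuity/normal-families argument on the potentials $d_k^{-n}(f^n)^*\varphi$), and that the bad set is moreover closed under $f^{-1}$ and, by a classical argument on totally invariant sets under finite-degree maps (volume/degree bookkeeping, as $f^{-1}(\mathcal E) = \mathcal E$ forces $\mathcal E$ to be a finite union of subvarieties of bounded degree permuted by $f$), is actually analytic. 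The total invariance $f^{-1}(\mathcal{E}) = \mathcal{E}$ follows from the relation $d_k^{-1}f^*\mu_f = \mu_f$ together with the fact that $d_k^{-n}(f^n)^*\delta_a$ and $d_k^{-n}(f^n)^*\delta_{f(a)}$ have the same limiting behavior up to the finite branching of $f$. The main obstacle I anticipate is part $(iii)$: establishing that the exceptional set is genuinely analytic (rather than merely pluripolar or closed) in the \emph{non-K\"ahler} setting, since the usual proofs lean on intersection-theoretic estimates and regularization of currents that are delicate on $X \in \Gc$; here one must rely on \cite{Vu_nonkahler_topo_degree} where exactly this was carried out for self-maps, and check that the holomorphic correspondence / self-map hypotheses of the corollary place us within its scope. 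Parts $(i)$ and $(ii)$ are comparatively routine given Theorems~\ref{th_dominant} and~\ref{the_equidistrG} and the lemmas of Section~\ref{sec_analy}.
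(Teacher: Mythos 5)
There is a genuine gap, and it is the same missing ingredient for all three parts: Lemma \ref{le_BClightramified}. The paper's proof does not derive the corollary from the density-current machinery at all; it reduces everything to the Dinh--Sibony inverse-branch arguments of \cite{DS_book} (Pro.~1.51, Th.~1.45, Th.~1.57, Th.~1.120), and the only genuinely new input needed to run those arguments on a non-K\"ahler $X\in\cali{G}$ is the Bott--Chern cohomology lemma showing that (a) $f$ is a ramified covering and (b) the topological degree of $f^n|_Y$ on an invariant subvariety $Y$ of dimension $q$ is $O\big((d_q+\epsilon)^n\big)$. These two facts are what allow one to define the ramification current $R=\sum_{n\ge 0} d_k^{-n}f^n_*[Y]$ (convergent because $d_k>d_{k-1}$), set $E_1=\{\nu(R,\cdot)\ge 1\}$ via Siu's theorem, and take $\mathcal E=\{x: f^{-n}(x)\subset E_1 \ \forall n\}$ -- which is analytic \emph{by construction}, answering exactly the difficulty you flag in $(iii)$. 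Your proposal never constructs $R$, and without it neither the exceptional set of $(iii)$, nor the inverse branches that give the Lyapunov bound in $(i)$, nor the repelling periodic points that give $(ii)$ are available. In particular your sketch of $(i)$ via Lemma \ref{le_spectralgap} and Oseledec is not a proof: the bound $\frac12\log(d_k/d_{k-1})$ on \emph{each} exponent comes from constructing inverse branches of $f^n$ on balls with images of size $O\big((d_{k-1}/d_k)^{n/2}\big)$ outside a set controlled by $R$, not from the growth of $\|(f^n)_*\Phi\|$.

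Your alternative route to $(ii)$ through Lemma \ref{le_isolatedpoint} and Proposition \ref{pro_semicontinuity2} also does not close. First, the semicontinuity statements (via Lemma \ref{le_limV}) only control pairings of $\widehat T_n\wedge[\widehat\Delta]\wedge\widehat\omega^{k-1}$ against forms $\Omega$ with $\ddc\Omega=0$ on $\widehat\Delta$; for an arbitrary test function $\chi$ the term $\int u\,\chi\,\ddc\Omega$ in the proof of Lemma \ref{le_limV} involves the unbounded potential $u$ and one cannot pass to the limit. So you get convergence of \emph{masses}, not the weak convergence $\nu_n\to\mu_f$ of the dominating measures that your squeeze argument requires. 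Second, even granting that, Lemma \ref{le_isolatedpoint} counts isolated \emph{plus exotic} points, so the squeeze would at best equidistribute $\tilde P_n^{-1}$ times the full sum; to get $\mu_n=P_n^{-1}\sum_{\text{isolated}}\nu_x\delta_x\to\mu_f$ you need a matching lower bound on the number of \emph{isolated} periodic points, which only the dynamical construction of $\gtrsim(1-\epsilon)d_k^n$ repelling periodic points provides. The paper obtains $(ii)$ precisely by combining that construction with the upper bound $\tilde P_n\le d_k^n+o(d_k^n)$ of Theorem \ref{the_equidistrG}.
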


In the K\"ahler case, Corollary \ref{cor_khifsurjective} is already known, see \cite{DS_book,DNT_equi,DinhSibony_allure,Guedj,Briend_Duval_periodic,Duval_Briend_entropy}.   In order to prove Corollary \ref{cor_khifsurjective} we will need the following result which is more or less a trivial extension of \cite[Le. 4.7]{DNV}. 

\begin{lemma} \label{le_BClightramified} Let $X \in \cali{G}$ and $f$ a surjective holomorphic self-map of $X.$ Then the fibers of $f$ are finite, in other words, $f$ is a ramified covering. Moreover  for every constant $\epsilon>0$ and  every irreducible analytic subset $Y$ of dimension $q$ of $X$ with  $f(Y) \subset Y,$ then $f(Y)=Y$ and  the topological degree of $f^n|_Y$ is $\le  \vol(Y)^{-1} \big(d_q(f)+\epsilon\big)^n$ for  $n \ge n_\epsilon$ big enough.  
\end{lemma}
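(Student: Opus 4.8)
The plan is to imitate the proof of \cite[Le. 4.7]{DNV} for the K\"ahler case and observe that the only property of the ambient metric that is used is $\ddc \omega^j = 0$ for $1 \le j \le k-1$, which is exactly the defining property of $X \in \cali{G}$. The argument is most naturally phrased using the Bott--Chern and Aeppli cohomology groups $H^{p,p}_{BC}(X)$ and $H^{p,p}_A(X)$, which are finite dimensional for $X$ compact; in the K\"ahler case $\omega^j$ is closed and these groups reduce to the usual $(j,j)$-Dolbeault groups, which is why there the statement is classical.

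For the first assertion, suppose for contradiction that some fibre $f^{-1}(y_0)$ has an irreducible component $Z$ with $m := \dim Z \ge 1$ (so $m \le k-1$, as $f$ is surjective). The integration current $[Z]$ is $d$-closed and positive, hence defines a class $\{Z\} \in H^{k-m,k-m}_{BC}(X)$, while $\ddc\omega^m = 0$ makes $\omega^m$ define a class $\{\omega^m\} \in H^{m,m}_A(X)$; the Bott--Chern--Aeppli pairing gives $\langle \{Z\}, \{\omega^m\}\rangle = \int_Z \omega^m = \vol_\omega(Z) > 0$. On the other hand, since $f|_Z$ is constant the current $f_*[Z]$ is positive, closed and of mass $\langle [Z], f^*\omega^m\rangle = \int_Z (f^*\omega)^m = 0$, so $f_*[Z] = 0$, and therefore $f_*\{Z\} = 0$ in $H^{k-m,k-m}_{BC}(X)$. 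Because $f$ is surjective one has $f_* f^* = (\deg f)\,\id$ on currents, hence on cohomology, so $f^*$ is injective on the finite dimensional space $H^{m,m}_A(X)$ and thus bijective; choose $\{\eta\}$ with $f^*\{\eta\} = \{\omega^m\}$. The projection formula then yields $\vol_\omega(Z) = \langle \{Z\}, f^*\{\eta\}\rangle = \langle f_*\{Z\}, \{\eta\}\rangle = 0$, a contradiction. Hence all fibres of $f$ are finite and $f$ is a ramified covering.

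For the second assertion, fix $\epsilon > 0$ and an irreducible analytic $Y$ of dimension $q$ with $f(Y) \subset Y$. Since the fibres of $f$ are finite, $f|_Y$ is a finite map, so $f(Y)$ is an irreducible analytic subset of dimension $q$ contained in $Y$, forcing $f(Y) = Y$; inductively $f^n(Y) = Y$ and $f^n|_Y \colon Y \to Y$ is finite surjective. Let $\delta_n$ be its topological degree. If $q = k$ then $Y = X$, $\delta_n = d_k(f)^n$ and the estimate is immediate, so assume $q \le k-1$. The change of variables formula for finite holomorphic maps, together with $(f^n|_Y)^*(\omega^q|_Y) = \big((f^n)^*\omega^q\big)|_Y$, gives $\delta_n \,\vol_\omega(Y) = \int_Y \big((f^n)^*\omega^q\big)|_Y = \langle \{Y\}, (f^*)^n\{\omega^q\}\rangle$, the last being the Bott--Chern--Aeppli pairing (legitimate since $\ddc\omega^q = 0$) and $(f^*)^n\{\omega^q\} = \{(f^n)^*\omega^q\}$ by functoriality. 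Now bound the linear functional $\langle\{Y\},\cdot\rangle$ on the finite dimensional space $H^{q,q}_A(X)$ by a constant times the quotient mass-norm, and bound the quotient norm of $\{(f^n)^*\omega^q\}$ by the $\omega$-mass of the explicit representative $(f^n)^*\omega^q$, namely $\int_X (f^n)^*\omega^q \wedge \omega^{k-q}$. By the definition of $d_q(f)$ this last quantity is $\le (d_q(f) + \epsilon/2)^n$ for $n$ large; absorbing the fixed constant into the exponential gives $\delta_n \le \vol_\omega(Y)^{-1}(d_q(f) + \epsilon)^n$ for $n \ge n_\epsilon$, as desired.

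The point requiring care is that, $\omega$ not being closed, $\int_Y (f^n)^*\omega^q$ is \emph{not} a de Rham intersection number; one must replace de Rham classes by Bott--Chern and Aeppli classes and verify that the pairing $H^{p,p}_{BC}(X) \times H^{k-p,k-p}_A(X) \to \C$ is well defined (this is precisely where $\ddc\omega^j = 0$ is used), that $f^*$ and $f_*$ descend to these groups with $f_* f^* = (\deg f)\,\id$, and that the quotient mass-norm on $H^{q,q}_A(X)$ is genuinely a norm so that $\langle\{Y\},\cdot\rangle$ is bounded. These are all standard facts about compact complex manifolds, and once they are in place the proof proceeds exactly as in the K\"ahler setting.
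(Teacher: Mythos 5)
Your proposal is correct and follows essentially the same route as the paper: both arguments run through the finite-dimensional Bott--Chern and Aeppli cohomologies, use $f_*f^* = d_k\,\id$ to get invertibility (you phrase it as injectivity of $f^*$ on $H^{m,m}_A$, the paper as surjectivity of $f_*$ on $H^{k-m,k-m}_{BC}$ --- dual formulations of the same step), detect $\{Z\}_{BC}\neq 0$ via the pairing with $\omega^m$, and bound $\delta_n\vol(Y)$ by the mass $\int_X (f^n)^*\omega^q\wedge\omega^{k-q}\lesssim (d_q+\epsilon)^n$. The only cosmetic difference is in the last step, where you invoke equivalence of norms on $H^{q,q}_A(X)$ (requiring the standard closedness of $\operatorname{im}\partial+\operatorname{im}\overline\partial$), whereas the paper expands a smooth representative of $\{Y\}_{BC}$ in a fixed basis of closed forms and pairs term by term, which sidesteps that point.
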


\proof We will argue exactly as in the proof of \cite[Le. 4.7]{DNV} with the Bott-Chern cohomology in place of the de Rham cohomology. Let $\omega$ be a Hermitian metric on $X$ with $\ddc \omega^j=0$ for $1 \le j \le k-1,$ where $k=\dim X.$ The form $\omega^j$ for $1 \le j \le k-1$ induces a class in the Aeppli cohomology of $X,$ see \cite{Daniele_book} for the definition of the Aeppli cohomology. 
 
 Consider the Bott-Chern cohomology $H^{*,*}_{BC}(X)$ of $X$ which is defined by  $H^{*,*}_{BC}(X):=  \Ker d/ \Im \ddc.$ This cohomology $H^{*,*}_{BC}(X)$ for currents or forms is the same and  its dimension is finite because $X$ is compact, see \cite{Daniele_book} for a proof.  Observe that $f^*, f_*$ induce naturally linear endomorphisms on $H^{*,*}_{BC}(X).$ Since $f_* f^* \alpha= d_k \alpha$ for every smooth form $\alpha,$ the map  $f_*  f^*$ acting on the Bott-Chern cohomology is just the multiplication by $d_k.$ Hence, $f_*$ is invertible.  

Suppose that there is $x \in X$ for which $Y:= f^{-1}(x)$ is of a strictly positive dimension $q$. Denote by $\{Y\}_{BC}$ the class of $[Y]$ in $H^{k-q,k-q}_{BC}(X).$ Using $\int_Y \omega^q >0,$ we see that $\{Y\}_{BC}$ is nonzero because of  the duality between the Bott-Chern cohomology and the Aeppli cohomology.  On the other hand, $f_* [Y]=0$ because $f(Y)=\{x\}$ of dimension $0.$ It follows that $f_* (\{Y\}_{BC})=0.$ This is a contradiction because $f_*$ is invertible.  We conclude that the fibers of $f$ are finite. 

Let $Y$ be an irreducible analytic subset of $X$ with $f(Y)\subset Y.$ Since the fiber of $f$ is finite, $f(Y)$ is of the same dimension $q$. Thus $f(Y)=Y.$ We have $f^n(Y)=Y$ for every $n.$   Let $\delta$ be the topological degree of $f^n|_Y.$ Observe $(f^n)_* [Y]= \delta [Y]$ because $Y$ is irreducible.  Let  $\beta_1, \ldots,\beta_m$ be closed forms on $X$ such that their Bott-Chern cohomology classes form a basis of $H^{k-q,k-q}_{BC}(X).$    Let $\Phi$ be a closed form in the  Bott-Chern cohomology class of $[Y].$ We can write $\Phi= \sum_{j=1}^m a_j \beta_j+ \ddc \Phi'$ for some $a_j \in \C$ and some smooth form $\Phi'.$  Consequently, 
\begin{align}\label{ine_massfsaoY}
\| (f^n)_* [Y] \| = \langle (f^n)_* [Y], \omega^q \rangle= \langle (f^n)_* \Phi, \omega^q \rangle \lesssim  \sum_{j=1}^m | \langle (f^n)_* \beta_j, \omega^q \rangle | \lesssim (d_q+\epsilon)^n,
\end{align}     
 for  $n\ge n_\epsilon$ big enough. We deduce that  $\delta \| [Y]\| \le (d_q+ \epsilon)^n.$  This finishes the proof. 
\endproof

\begin{proof}[Proof of Corollary \ref{cor_khifsurjective}]
We first  prove $\mu_n \to \mu_f.$  We just follow the usual idea to construct good inverse branches of $f^n$.  Let $Y$ be the set of critical values of $f.$  The fiber $f^{-1}(x)$ has exactly $d_k$ points for $x \in X \backslash Y.$  The set $Y$ is the image by $f$ of the critical set of $f$ which is a hypersurface.  By  Lemma \ref{le_BClightramified}, $Y$ is also a hypersurface.   

Let $\epsilon$ be a small positive constant for which $d_k > d_j+\epsilon$ for $0 \le j \le k-1$. By (\ref{ine_massfsaoY}) and the fact that $d_k> d_{k-1},$ we see that 
$$R:= \sum_{n\ge 0} d_k^{-n} f^n_* [Y]$$
is a well-defined closed positive $(1,1)$-current on $X.$   This current $R$ is called the ramification current of $f$. Put     $E_1:=\{x\in X: \nu(R,x) \ge 1\}$ which is an analytic subset of $X$ by  Siu's semi-continuity theorem.    Let $\mathcal{E}$ be  the set of $x \in X$ for which $f^{-n}(x) \in E_1$ for every $n\in \N.$   

Arguing exactly as in the proof of  \cite[Pro. 1.51, Th. 1.45]{DS_book}, we see that Property $(iii)$ holds and  $\mathcal{E}$ is totally invariant and maximal in the sense that for every proper analytic subset  $E$ of $X$ with $f^{-s}(E) \subset E$ for some $s\ge 1$ then $E \subset \mathcal{E}.$ We also have that there are at most a finitely many analytic sets in $X$ which are totally invariant. We only need to note that the proofs presented there only used the K\"ahler form to construct $R$ and estimate the topological degree of $f^n|_Y;$ the other arguments hold without the presence of a K\"ahler form.  By the same reason, we obtain the lower bound for the Lyapunov exponents of $\mu_f$ and the equidistribution of isolated periodic points of $f$ as in \cite[Th. 1.57, Th. 1.120]{DS_book}.  Remark that although we don't know  whether $\mu_f$ is ergodic, this issue doesn't affect arguments in  \cite[Th. 1.120]{DS_book}.  
The proof is finished.
\end{proof}

\begin{remark} \label{reuniquenssentropy} 
Consider now $X$ is of dimension $2$ and $f$ a meromorphic self-map of $X$ with dominant topological degree.  By Theorems \ref{th_dominant} and  \ref{th_main2phay}, $\mu_f$ is an invariant measure of maximal entropy.  We can show that $\mu_f$ is a unique measure of maximal entropy  by using Gauduchon's metric instead of a K\"ahler form, arguments in the proof of Theorem \ref{th_main2phay} and   repeating arguments in K\"ahler case \cite[Th. 1.118]{DS_book} or \cite[Th. 2]{Duval_Briend_entropy}. 
\end{remark}

\bibliography{biblio_family_MA,biblio_Viet_papers}
\bibliographystyle{siam}

\end{document}